\numberwithin{equation}{section}
\numberwithin{figure}{section}
\theoremstyle{plain}
\newtheorem{thm}{\protect\theoremname}[section]
\theoremstyle{plain}
\newenvironment{proof}[1][\protect\proofname]{\par
	\normalfont\topsep6\p@\@plus6\p@\relax
	\trivlist
	\itemindent\parindent
	\item[\hskip\labelsep\scshape #1]\ignorespaces
}{%
	\endtrivlist\@endpefalse
}
\providecommand{\proofname}{Proof}
\theoremstyle{remark}
\newtheorem{rem}[thm]{\protect\remarkname}
\theoremstyle{plain}
\newtheorem{lem}[thm]{\protect\lemmaname}
\providecommand{\corollaryname}{Corollary}
\providecommand{\lemmaname}{Lemma}
\providecommand{\remarkname}{Remark}
\providecommand{\theoremname}{Theorem}
\numberwithin{equation}{section}
\numberwithin{figure}{section}
\theoremstyle{plain}
\theoremstyle{plain}
\newtheorem{remark}[thm]{Remark}
\newtheorem{example}[thm]{Example}
\newtheorem{conj}[thm]{\protect Conjecture}
\def\1{{\textcolor{red} {1}}}
\def\d1{{\textcolor{red} {d-1}}}
\def \mmod {{\: \mathrm{mod}\:}}
\def \bN {\mathbb N}
\def \bR {\mathbb R}
\def \bZ {\mathbb Z}
\def \ba {\mathbf a}
\def \bb {\mathbf b}
\def \bd {\mathbf d}
\def \bi {\mathbf i}
\def \bm {\mathbf m}
\def \bn {\mathbf n}
\def \br {\mathbf r}
\def \bt {\mathbf t}
\def \bx {\mathbf x}
\def \by {\mathbf y}
\def \bz {\mathbf z}
\def \bzero {\mathbf 0}
\def \balp {{\boldsymbol{\alp}}}
\def \bgam {{\boldsymbol{\gam}}}
\def \bxi {{\boldsymbol{\xi}}}
\def \fb {\mathfrak b}
\def \fr {\mathfrak r}
\def \fB {\mathfrak B}
\def \fC {\mathfrak C}
\def \fN {\mathfrak N}
\def \fT {\mathfrak T}
\def \cA {\mathcal A}
\def \cB {\mathcal B}
\def \cC {\mathcal C}
\def \cE {\mathcal E}
\def \cH {\mathcal H}
\def \cL {\mathcal L}
\def \cM {\mathcal M}
\def \cP {\mathcal P}
\def \cR {\mathcal R}
\def \cT {\mathcal T}
\def \cU {\mathcal U}
\def \cX {\mathcal X}
\def \cZ {\mathcal Z}
\def \le {\leqslant}
\def \leq {\leqslant}
\def \ge {\geqslant}
\def \geq {\geqslant}
\def \det {\mathrm{det}}
\def \dag {\dagger}
\def \diam {\diamond}
\def \d {{\mathrm{d}}}
\def \ds1 {\mathds{1}}
\def \alp {{\alpha}}
\def \bet {{\beta}}
\def \gam {{\gamma}}
\def \del {{\delta}}
\def \eps {{\varepsilon}}
\def \lam {{\lambda}}
\def \sig {{\sigma}}
\def \ome {{\omega}}
\begin{document}

\author{Sam Chow \and Han Yu}

\address{Mathematics Institute, Zeeman Building, University of Warwick, Coventry CV4 7AL, United Kingdom}
\email{sam.chow@warwick.ac.uk}

\address{Mathematics Institute, Zeeman Building, University of Warwick, Coventry CV4 7AL, United Kingdom}
\email{Han.Yu.2@warwick.ac.uk}

\title[Multiplicative approximation on hypersurfaces]{Moment transference principles and multiplicative diophantine approximation on hypersurfaces}
\subjclass[2020]{11J83 (primary); 42A16 (secondary)}
\keywords{Littlewood's conjecture, diophantine approximation, manifolds, Fourier series}

\makeatletter
\providecommand\@dotsep{5}
\makeatother

\begin{abstract}
We determine the generic multiplicative approximation rate on a hypersurface. There are four regimes, according to convergence or divergence and curved or flat, and we address all of them. Using geometry and arithmetic in Fourier space, we develop a general framework of moment transference principles, which convert Lebesgue data into data for some other measure.
\end{abstract}

\maketitle


\section{Introduction}

A famous conjecture of Littlewood, from around 1930, asserts that if 
$x,y$ are real and 
$\eps > 0$ then there exists $n \in \bN$ such that
\[
n \| n x \| \cdot \| n y \|
< \eps,
\]
where $\| \cdot \|$ denotes the distance to the nearest integer. Einsiedler, Katok and Lindenstrauss \cite{EKL2006} showed that the set of exceptions $(x,y)$ has Hausdorff dimension zero. Nonetheless, the problem remains open, even in natural special cases such as 
$(x,y) = (\sqrt 2, \sqrt 3)$.
To our knowledge, the only non-trivial specific cases to have been resolved are when $x$ and $y$ lie in the same cubic field \cite{CSD, Peck}.

Turning to the metric theory, Gallagher \cite{Gal1962} determined the multiplicative approximation rate of a generic pair $(x,y) \in \bR^2$. We write $\lam_k$ for $k$-dimensional Lebesgue measure. For $\psi: \bN \to [0,1)$, we denote by $W_k^\times(\psi)$ the set of vectors $(x_1, \ldots, x_k) \in [0,1)^k$ such that
\[
\| n x_1 \| \cdots \| n x_k \| < \psi(n)
\]
has infinitely many solutions $n \in \bN$. 

\begin{thm} [Gallagher, 1962]
\label{GallagherThm}
Let $k \in \bN$, and let $\psi: \bN \to [0,1)$ be non-increasing. Then
\[
\lam_k(W_k^\times(\psi))
= \begin{cases}
0, &\text{if } \displaystyle
\sum_{n=1}^\infty 
\psi(n) (\log n)^{k-1} 
< \infty \\ \\
1, &\text{if } \displaystyle
\sum_{n=1}^\infty 
\psi(n) (\log n)^{k-1} 
= \infty.
\end{cases}
\]
\end{thm}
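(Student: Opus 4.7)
The plan is to split into a convergence Borel--Cantelli and a divergence second-moment argument. Write
\[
A_n := \{\bx \in [0,1)^k : \|nx_1\|\cdots\|nx_k\| < \psi(n)\},
\]
so that $W_k^\times(\psi) = \limsup_n A_n$. A substitution $w_i = -\log u_i$ evaluates
\[
V(t) := \lam_k\{\bu \in [0,1]^k : u_1\cdots u_k \le t\} = t\sum_{j=0}^{k-1}\frac{(\log(1/t))^j}{j!} \asymp t(\log(1/t))^{k-1}
\]
for $t$ small, and the map $\bx \mapsto (\|nx_i\|)_i$ sends Lebesgue measure on $\bT^k$ to $2^k$ times Lebesgue measure on $[0,1/2]^k$, whence $\lam_k(A_n) \asymp V(\psi(n))$.

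For the convergence half I would replace $\psi$ by $\tilde\psi(n) := \max(\psi(n), n^{-2})$. Then $W_k^\times(\psi) \subseteq W_k^\times(\tilde\psi)$, the extra contribution $\sum n^{-2}(\log n)^{k-1}$ converges so that $\sum \tilde\psi(n)(\log n)^{k-1} < \infty$, and $\log(1/\tilde\psi(n)) \le 2\log n$ gives $\lam_k(A_n(\tilde\psi)) \ll \tilde\psi(n)(\log n)^{k-1}$. Borel--Cantelli completes the convergence half.

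For the divergence half, an elementary comparison exploiting the monotonicity of $\psi$ and splitting according as $\psi(n) \lessgtr 1/n$ shows that $E_N := \sum_{n \le N}\lam_k(A_n) \to \infty$. A zero--one law -- invariance of $W_k^\times(\psi)$ under translations by the dense subgroup $\bQ^k/\bZ^k \subset \bT^k$ -- reduces matters to showing $\lam_k(W_k^\times(\psi)) > 0$. The Chung--Erd\H{o}s inequality
\[
\lam_k\Bigl(\bigcup_{n \le N}A_n\Bigr) \ge \frac{E_N^2}{\sum_{m,n \le N}\lam_k(A_m \cap A_n)}
\]
then reduces matters to the quasi-independence estimate $\sum_{m,n \le N}\lam_k(A_m \cap A_n) \le (1+o(1))E_N^2$.

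This correlation estimate is where the main obstacle lies. The plan is to use Fourier analysis on $\bT^k$: $\ds1_{A_n}(\bx)$ is the pullback under $\bx \mapsto n\bx$ of the indicator $\chi_n$ of the product region $\{\bu \in \bT^k : \|u_1\|\cdots\|u_k\| < \psi(n)\}$, so its Fourier expansion on $\bT^k$ is supported on the sublattice $n\bZ^k$ with coefficients inherited from those of $\chi_n$. A direct computation shows those coefficients decay like $\prod_i \min(1, 1/|j_i|)$ with polylogarithmic prefactors in $\psi(n)$, the product region being a cartesian product of hyperbolic neighbourhoods of the coordinate hyperplanes so that the computation reduces coordinatewise. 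Orthogonality then collapses $\int_{\bT^k}\ds1_{A_m}\ds1_{A_n}\,\d\lam_k$ to a sum over the common frequency lattice $m\bZ^k \cap n\bZ^k = \lcm(m,n)\bZ^k$; the zero frequency contributes the main term $\lam_k(A_m)\lam_k(A_n)$, and the off-zero contributions, once averaged over $m,n \le N$ via a divisor-type sum, deliver the required $o(E_N^2)$ error. This is the heart of Gallagher's 1962 argument.
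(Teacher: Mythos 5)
Your convergence half is fine and is essentially the paper's own treatment (Lemma \ref{LebesgueFirstLemma} together with the first Borel--Cantelli lemma). The divergence half, however, contains two genuine gaps.

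First, the zero--one law. The set $W_k^\times(\psi)$ is \emph{not} invariant under translation by $\ba/q \in \bQ^k/\bZ^k$: one has $\|n(x_i+a_i/q)\|=\|nx_i\|$ only when $q\mid n$, and an integer $n$ witnessing $\bx\in W_k^\times(\psi)$ gives no control over $\bx+\ba/q$ otherwise. So the reduction of full measure to positive measure is unjustified as stated; a zero--one law in this setting is a non-trivial theorem in its own right (Cassels/Gallagher for $k=1$), not a one-line ergodicity remark. The paper sidesteps the issue entirely: it proves quasi-independence with a constant $C$ that can be made arbitrarily close to $1$ by smoothing the indicator functions (Lemma \ref{lem: divergence constant}), so that the divergence Borel--Cantelli argument (Lemma \ref{FunctionalDBC}) yields measure at least $1/C$ for every $C>1$, hence measure $1$, with no zero--one law needed.

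Second, and more seriously, the correlation estimate --- the heart of the matter --- is asserted rather than proved, and the Fourier bounds you propose do not suffice to prove it. Bounding $|\hat{\chi}_n(\bj)|$ by $\prod_i\min(1,1/|j_i|)$ times polylogarithms and summing absolute values over the common lattice $\lcm(m,n)\bZ^k$ loses far too much: for instance, at the single frequency $\bxi=\lcm(m,n)(1,\ldots,1)$ your bound is already of polylogarithmic size, dwarfing the main term $\lam_k(A_m)\lam_k(A_n)$, and the frequencies supported on a coordinate axis, $\bxi=\lcm(m,n)(t,0,\ldots,0)$, contribute under your bounds a total over $m,n\le N$ of power-of-$N$ size, which swamps $E_N^2$ whenever $E_N$ grows slowly (e.g.\ $\psi(n)=n^{-1}(\log n)^{-k}$, where $E_N\asymp\log\log N$). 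The true correlation is much smaller only because of cancellation that taking absolute values destroys. Your attribution is also off: Gallagher's 1962 argument is not Fourier-analytic but geometric. His Lemma 1 states that for strongly star-shaped sets the overlap $\lam(H\cap(H'+\bt))$ is componentwise monotone in $\bt$, which allows one to dominate the sum over the translates in the lattice $(nn'/\gcd(n,n'))^{-1}\bZ^k$ by an integral, recovering $\lam_k(A_m)\lam_k(A_n)$ as the main term with an error governed by boundary counts and divisor sums. This is exactly what the paper implements in \S\ref{sec: Lebesgue} (Theorems \ref{thm: quasi-independence part one} and \ref{thm: quasi-independence part two}); note in particular that the error is $O(E_N)$ only for $k\ge 3$, with an extra factor $n/\varphi(n)$ at $k=2$ that must be averaged away using the monotonicity of $\psi$ --- a subtlety invisible in your sketch. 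Either supply the geometric argument or carry out the Fourier computation with the genuine per-box bounds and the requisite cancellation; as written, the divergence half is not established.
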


\subsection{Diophantine approximation on manifolds}

For $\psi: \bN \to [0,1)$, we denote by $W_k(\psi)$ the set of  
$(x_1, \ldots, x_k) \in [0,1)^k$ 
such that
\[
\max \{
\| n x_1 \|, \ldots, \| n x_k \| 
\} < \psi(n)
\]
has infinitely many solutions $n \in \bN$. 
Recently, there were breakthroughs by Beresnevich--Yang \cite{BY2023} and Beresnevich--Datta \cite{BD} to complete the following result (loosely speaking, non-degeneracy entails curvature). 

\begin{thm} [Beresnevich--Yang, Beresnevich--Datta]
Let $\cM$ be a non-degenerate submanifold of $\bR^k$, and let $\psi: \bN \to [0,1)$ be non-increasing. Then almost no/every point on $\cM$ lies in $W_k(\psi)$ if the series
\[
\sum_{n=1}^\infty 
\psi(n)^k
\]
converges/diverges.
\end{thm}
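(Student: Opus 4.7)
The plan is to treat the convergence and divergence halves separately. After covering $\cM$ by finitely many coordinate charts, it suffices to prove the analogous statement for $\lam_d$ on an open $U \subset \bR^d$ via a smooth non-degenerate parametrisation $\bf: U \to \bR^k$, where $d = \dim \cM$.

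For the \textbf{convergence} half I would run a direct Borel--Cantelli argument. Write $A_n \subset U$ for the set of $\bu$ with some $\bp \in \bZ^k$ satisfying $|\bf(\bu) - \bp/n|_\infty < \psi(n)/n$. For each fixed $\bp$, the corresponding subset of $U$ has $\lam_d$-measure $\ll (\psi(n)/n)^d$ by Lipschitz control on $\bf$. The number of eligible $\bp$ is at most the count of lattice points of $(1/n)\bZ^k$ in the $(\psi(n)/n)$-neighbourhood of $\bf(U)$; non-degeneracy forces this neighbourhood to have ambient volume $\asymp (\psi(n)/n)^{k-d}$, giving $O(n^d \psi(n)^{k-d})$ eligible $\bp$ (this counting step is essentially classical, in the spirit of Bernik--Kleinbock--Margulis). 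Multiplying yields $\lam_d(A_n) \ll \psi(n)^k$, and $\sum_n \psi(n)^k < \infty$ then closes the case.

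For the \textbf{divergence} half I would invoke the ubiquity framework of Beresnevich--Dickinson--Velani. Set $\rho(Q) = Q^{-1-1/k}$, so that the critical divergence matches $\sum \psi(n)^k$. The central step is to show that the family of rational points $\bp/q$ with $q \in [Q, 2Q]$ lying within $\rho(Q)$ of $\bf(U)$ is \emph{locally ubiquitous}: for $\lam_d$-almost every $\bu \in U$, one has $|\bf(\bu) - \bp/q|_\infty \le \rho(Q)$ for some such $\bp/q$, at every sufficiently large scale $Q$. Granted ubiquity, the standard machinery converts the divergence of $\sum \psi(n)^k$ into $\lam_d$-almost every $\bu \in U$ lying in $W_k(\psi) \cap \cM$; a zero-one law then promotes positive measure to full measure.

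The \textbf{main obstacle} is the uniform lower-bound count that feeds the ubiquity statement: for every ball $B \subset U$ of side $\eta$ and every $Q \ge Q_0(\eta)$,
\[
\#\big\{(\bp, q) \in \bZ^k \times \bN : q \in [Q,2Q], \ \mathrm{dist}(\bp/q, \bf(B)) \le \rho(Q) \big\} \gg \eta^d Q^{d(k+1)/k},
\]
matching the averaged heuristic count. Producing this \emph{uniform} lower bound for general non-degenerate $\cM$ is precisely the Beresnevich--Yang--Datta breakthrough; their strategy recasts the count via a Dani-type correspondence as quantitative non-divergence of unipotent trajectories on the space of unimodular lattices in $\bR^{k+1}$, to which Kleinbock--Margulis technology can be applied. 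I would expect any proof to stand or fall on this input; the convergence half and the ubiquity-to-Khintchine conversion are essentially bookkeeping by comparison.
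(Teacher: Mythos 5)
First, note that the paper does not prove this statement at all: it is quoted as an attributed external theorem of Beresnevich--Yang and Beresnevich--Datta, so there is no internal proof to compare against. Judged on its own terms, your proposal is a reasonable description of the standard architecture (first Borel--Cantelli for convergence; local ubiquity plus a zero--one law for divergence), but it is a roadmap rather than a proof, since both halves rest on counting theorems that your sketch does not supply. For the divergence half you flag this honestly: the uniform lower bound feeding ubiquity is precisely the content of the Beresnevich--Yang breakthrough, and nothing in your outline produces it.

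The genuine gap is in the convergence half, which you dismiss as ``essentially bookkeeping''. The step asserting that the $(\psi(n)/n)$-neighbourhood of $\bf(U)$ has ambient volume $\asymp (\psi(n)/n)^{k-d}$ and therefore contains $O(n^d \psi(n)^{k-d})$ points of $n^{-1}\bZ^k$ conflates volume with lattice-point count. For a thin slab around a submanifold, the number of rational points of denominator $n$ it contains is not controlled by $n^k$ times its volume: a rational affine subspace, for instance, carries $\asymp n^d$ such points no matter how thin the neighbourhood is, vastly exceeding the volume heuristic once $\psi(n)$ is small. Ruling out this kind of concentration for general non-degenerate $\cM$ --- i.e.\ proving the sharp upper bound for rational points near manifolds --- is itself a deep open-until-recently problem (cf.\ the paper's references to Huang, Schindler--Yamagishi, Srivastava--Technau), and its resolution in the required generality is exactly what Beresnevich--Datta contribute. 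So both halves of your outline ultimately defer to the cited breakthroughs; neither follows from Lipschitz estimates and classical ubiquity machinery alone, and the convergence half is not easier than the divergence half in the way your final paragraph suggests.
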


Following the seminal work of Kleinbock and Margulis \cite{KM1998}, this problem was posed in greater generality by Kleinbock, Lindenstrauss and Weiss \cite[Question 10.1]{KLW2004}, and subsequently coined the `Dream Theorem' in a popular survey article \cite{BRV2016}. The result was the culmination of a long line of work by many authors \cite{Ber2012, BDV2007, BVVZ2017, BVVZ2021, Ber1977, DRV1991, VV2006}. At the heart of the phenomenon lies the related, natural problem of counting rational points near manifolds \cite{BZ2010, Cho2017, Gaf2014, Hua2015, Hua2019, Hua2020, SST, SY2022, ST}. In the complementary setting of affine subspaces, Huang~\cite{Hua} established similar results subject to a generic diophantine condition on the parametrising matrix, advancing on the progress made by Kleinbock \cite{Kle2003} and by Huang and Liu \cite{HL2021}. 

\subsection{Multiplicative approximation on manifolds}

Kleinbock and Margulis \cite{KM1998} famously studied multiplicative approximation on manifolds at the level of the exponent. A point $\bx \in \bR^k$ is \emph{very well multiplicatively approximable} (VWMA) if there exists 
$\eps > 0$ such that
\[
\| n x_1 \| \cdots
\| n x_k \| < n^{-1-\eps}
\]
has infinitely many solutions 
$n \in \bN$.

\begin{thm} [Kleinbock--Margulis, 1998] Let $\cM$ be a non-degenerate submanifold of $\bR^k$. Then almost no point on $\cM$ is VWMA.
\end{thm}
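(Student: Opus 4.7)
The plan is to combine the Dani correspondence with the Kleinbock--Margulis quantitative non-divergence theorem and a Borel--Cantelli argument. To $\bx \in [0,1)^k$ associate the unipotent matrix
\[
u_\bx = \begin{pmatrix} 1 & \bx \\ \bzero & I_k \end{pmatrix} \in SL_{k+1}(\bR),
\]
and to $\bt = (t_1, \ldots, t_k) \in [0,\infty)^k$ the diagonal matrix $g_\bt = \diag(e^T, e^{-t_1}, \ldots, e^{-t_k})$ with $T = t_1 + \cdots + t_k$. Via the Dani correspondence, $\bx$ is VWMA if and only if there exist $\eps > 0$ and $\bt$ with $T$ arbitrarily large for which the lattice $g_\bt u_\bx \bZ^{k+1}$ contains a non-zero vector of norm at most $e^{-\eps T}$.

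Next, I would parametrise a coordinate chart of $\cM$ by a smooth non-degenerate map $\bf : U \to \bR^k$ with $U \subset \bR^d$ open. Non-degeneracy means that the partial derivatives of $1, f_1, \ldots, f_k$ up to some bounded order span $\bR^{k+1}$ pointwise on $U$. For each $\bt$ and each non-zero $\bv \in \bZ^{k+1}$, the entries of $g_\bt u_{\bf(\bu)} \bv$, viewed as functions of $\bu$, are linear combinations of $1, f_1, \ldots, f_k$ with coefficients controlled by $\bt$ and $\bv$; by the Kleinbock--Margulis analysis these functions are $(C, \alpha)$-good with uniform constants. Their quantitative non-divergence theorem then yields
\[
\lam_d\bigl(\{\bu \in U_0 : \exists\, \bv \in \bZ^{k+1} \setminus \{\bzero\}, \ \|g_\bt u_{\bf(\bu)} \bv\| \le e^{-\eps T}\}\bigr) \ll e^{-\alpha \eps T}
\]
for any compact $U_0 \subset U$, where $\alpha > 0$ depends only on $\bf$ and the chart.

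Finally, I would discretise the $\bt$-parameter into unit boxes with $T \in [N, N+1]$; the number of such boxes is polynomial in $N$. Summing the non-divergence bound over these boxes and over $N \ge 1$ shows that the total measure of the set of $\bu \in U_0$ that are ``bad'' at some depth $T \ge N$ is $\ll N^{O(1)} e^{-\alpha \eps N / 2}$, which is summable in $N$. Borel--Cantelli then gives that almost every $\bu \in U_0$ is bad at only finitely many depths, and unwinding the Dani correspondence proves that almost no point on $\cM$ is VWMA. Since $\cM$ is covered by countably many such charts, the global statement follows. The principal obstacle is the quantitative non-divergence for polynomial trajectories in the space of lattices, namely the Kleinbock--Margulis $(C, \alpha)$-good framework itself; once that deep input is available, the remainder is a standard Borel--Cantelli deduction.
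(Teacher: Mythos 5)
The paper does not prove this statement: it is quoted as background, with the proof deferred entirely to the cited work of Kleinbock and Margulis \cite{KM1998}. Your outline is, in essence, a faithful sketch of that original argument --- the Dani-type correspondence sending VWMA points to lattices $g_\bt u_\bx \bZ^{k+1}$ with exponentially short vectors along $T \to \infty$, followed by quantitative non-divergence and a Borel--Cantelli summation over a polynomial-in-$N$ family of $\bt$-boxes --- so it is correct modulo the one deep input you explicitly flag. One point where the sketch is thinner than the actual proof deserves mention: the quantitative non-divergence theorem has \emph{two} hypotheses, namely that for every primitive subgroup $\Delta \subseteq \bZ^{k+1}$ the function $\bu \mapsto \| g_\bt u_{\bf(\bu)} \Delta \|$ is $(C,\alpha)$-good on the ball \emph{and} that its supremum over the ball is bounded below by a fixed $\rho > 0$ uniformly in $\bt$. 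You mention only the first. The second is where non-degeneracy does its real work (and is the harder verification in \cite{KM1998}); without it the conclusion $\ll e^{-\alpha \eps T}$ does not follow. This does not invalidate your plan, since both verifications are part of the Kleinbock--Margulis machinery you are importing, but a complete write-up would have to carry out both. Note also that this dynamical route is entirely disjoint from the methods of the present paper, whose own results (Theorems \ref{CurvedConvergence}--\ref{FlatDivergence}) refine the exponent-level statement via Fourier-analytic moment transference rather than homogeneous dynamics.
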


A more precise dichotomy has largely eluded us.

\begin{conj} 
[Multiplicative Dream Theorem]
\label{MultDream}
Let $\cM$ be a non-degenerate submanifold of $\bR^k$, and let $\psi: \bN \to [0,1)$ be non-increasing. Then almost no/every point on $\cM$ lies in $W_k^\times(\psi)$ if the series
\[
\sum_{n=1}^\infty 
\psi(n) (\log n)^{k-1}
\]
converges/diverges.
\end{conj}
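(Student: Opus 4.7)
The natural plan is to lift Gallagher's flat-space theorem (Theorem \ref{GallagherThm}) to $\cM$ via a Fourier-analytic transference. After a partition of unity, parametrise a piece of the hypersurface as $\bx \mapsto (\bx, f(\bx))$ with $\bx$ in an open subset of $\bR^{k-1}$, and let $\mu$ be the pushforward of Lebesgue measure on the parameter set. Setting
\[
E_n := \{\bv \in [0,1)^k : \|nv_1\|\cdots \|nv_k\| < \psi(n)\},
\]
Gallagher's calculation gives $\lam_k(E_n) \asymp \psi(n)(\log n)^{k-1}$. The objective is a matching two-sided bound for $\mu(E_n)$, together with a pair-correlation estimate, so that both halves of Conjecture~\ref{MultDream} follow from standard Borel--Cantelli reasoning augmented by a Chung--Erd\H{o}s quasi-independence argument.

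For the convergence half, expand $\ds1_{E_n}$ in its Fourier series on $\bT^k$. The zero-frequency mode reproduces $\lam_k(E_n)$, while each nonzero mode is integrated against $\mu$ and therefore damped by $\hat\mu(\bxi)$. Non-vanishing Gaussian curvature and stationary phase yield $\hat\mu(\bxi) = O(|\bxi|^{-(k-1)/2})$, which combined with the geometric description of $E_n$ as a union of thin neighbourhoods of ``rational coordinate stars'' should give $\mu(E_n) \ll \psi(n)(\log n)^{k-1}$. Summing and invoking Borel--Cantelli closes this half.

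The divergence half is the real obstacle. One must produce a matching lower bound $\mu(E_n) \gg \psi(n)(\log n)^{k-1}$ and a quasi-independence estimate $\mu(E_m \cap E_n) \ll \mu(E_m)\mu(E_n)$ for most pairs $m<n$. The pair correlation, written as a double Fourier sum, becomes a weighted sum over lattice pairs $(\bh,\bk)$ with weight $\hat\mu(n\bh + m\bk)$. I would split the sum into a resonant regime (where $n\bh+m\bk$ is small or points in an unfortunate direction for stationary phase) and a generic regime. Curvature annihilates the generic contribution, while the resonant contribution is controlled by counting near-solutions of a linear equation in two vector unknowns, an input amenable to geometry-of-numbers or sieve bounds. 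A Borel-style zero-one law for the translation action on $\mu$ then boosts positive measure to full measure.

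The main technical difficulty is the resonant analysis, compounded by the multiplicative (rather than sup-norm) structure: the set $E_n$ is a union of long, thin hyperbolic cylinders, and their pairwise intersections are much harder to control than the axis-aligned boxes arising in the simultaneous problem. I expect the moment transference framework advertised in the abstract to succeed for hypersurfaces of non-vanishing Gaussian curvature, but to fall just short in higher codimension, where $\hat\mu$ decays more slowly and generic cancellation alone cannot absorb the resonant frequencies; an extension there likely requires arithmetic input such as Kloosterman sum cancellation or Heath-Brown-style determinant estimates.
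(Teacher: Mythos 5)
The statement you are attempting to prove is Conjecture~\ref{MultDream}, which is an \emph{open conjecture}: the paper does not prove it, and no proof is known. The paper establishes it only for hypersurfaces (codimension one), and even there only partially: the curved divergence case requires $k \ge 5$, the flat divergence case requires $k \ge 9$ under generic diophantine conditions, and the curved divergence case for $k=2$ is explicitly listed as an open problem. Your sketch correctly identifies the broad strategy the paper uses for its partial results --- decompose $E_n = A_n^\times$ into pieces amenable to Fourier analysis, transfer first and second moments from Lebesgue measure to $\mu$ using the decay $\hat\mu(\bxi) \ll |\bxi|^{-(k-1)/2}$, and conclude by Borel--Cantelli --- but the two places you flag as ``the real obstacle'' are precisely where the argument cannot currently be closed, so the proposal is a plan rather than a proof.

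Two concrete points where your plan, as written, would fail even in the hypersurface setting. First, the pair-correlation (variance transference) step: the resonant contribution is controlled in the paper by a divisor-bound count of parallel frequency pairs, and the generic contribution requires $\hat\mu(\bxi) \ll (1+|\bxi|)^{-\sigma}$ with $\sigma \ge 2$; since curvature only gives $\sigma = (k-1)/2$, this forces $k \ge 5$, and no amount of reorganisation of the same estimates removes that threshold --- for $k=2,3,4$ the curved divergence statement remains unproved. Second, your final step, ``a Borel-style zero-one law for the translation action on $\mu$ then boosts positive measure to full measure,'' is not available: $\mu$ is supported on a hypersurface and is not quasi-invariant under the translations that underlie Cassels/Gallagher-type zero-one laws. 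The paper circumvents this by a functional divergence Borel--Cantelli lemma in which the quasi-independence constant $C$ can be made arbitrarily close to $1$ by sharpening the bump functions, so that the limsup set has measure $\ge 1/C \to 1$ directly; without this device (or some substitute) your argument only yields positive measure. Finally, note that the paper does not Fourier-expand $\ds1_{E_n}$ itself: the hyperbolic region is first decomposed into $\asymp (\log(1/\psi(n)))^{k-1}$ axis-aligned boxes, each smoothed, because the raw indicator has insufficient Fourier decay; and the reconciliation of $\log(1/\psi(n))$ with $\log n$, and of $\psi(2^m)$ with $\psi(n)$, requires the auxiliary truncations carried out in Example~\ref{MultOK} and \S\ref{finale}. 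For higher codimension your own caveat is correct: the method does not apply, and the conjecture is open there too.
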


Badziahin and Levesley \cite{BL2007} solved the convergence theory for non-degenerate planar curves.

\begin{thm} [Badziahin--Levesley, 2007]
Let $I$ be an open interval, let $c_1 > c_2 > 0$. Let $f \in C^3(I)$, and assume that for almost every $x \in I$:
\[
c_1 > f'(x) > c_2,
\qquad f''(x) \ne 0.
\]
Let $\psi: \bN \to [0,1)$ with
\[
\sum_{n=1}^\infty 
\psi(n) \log n < \infty.
\]
Then
\[
\lam_1 \{ x \in I: 
(x, f(x)) \in W_2^\times(\psi) \} 
= 0.
\]
\end{thm}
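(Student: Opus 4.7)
The strategy is convergence Borel--Cantelli: writing
\[
A_n = \{x \in I : \|nx\| \cdot \|nf(x)\| < \psi(n)\},
\]
it suffices to prove a per-$n$ measure bound $\lambda_1(A_n) \lesssim \psi(n) \log n$ (with implicit constants depending only on $c_1, c_2, |I|$ and the lower bound on $|f''|$), whereupon $\sum_n \lambda_1(A_n) < \infty$ by hypothesis and $\lambda_1(\limsup_n A_n) = 0$. Since $f''\neq 0$ almost everywhere, the problem is local and we may restrict to a subinterval on which $|f''| \geq c > 0$.

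Partition $A_n$ by the nearest integer $p$ to $nx$ and pass to local coordinates $u = nx - p \in [-\tfrac12,\tfrac12]$; in the $p$-th column the condition reads $|u| \cdot |v_p(u) - q| < \psi(n)$, where $v_p(u) := nf((p+u)/n)$ has $v_p' \in [c_2, c_1]$ and varies by at most $c_1$ across the column, so only $O(1)$ integers $q$ are relevant. For each, setting $\beta_{p,q} := v_p(0) - q$ and treating $v_p$ as almost linear, an elementary ``line meets hyperbola'' computation (two branches, one near $u = 0$ and one near $v_p(u) = q$) yields $u$-length $\asymp \min(\psi(n)/|\beta_{p,q}|, \sqrt{\psi(n)})$; non-principal $q$'s have $|\beta_{p,q}| \gtrsim 1$ and contribute only $O(\psi(n))$ per column. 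Writing $\beta_p$ for the principal value (with $q_p$ the nearest integer to $nf(p/n)$), one reaches
\[
\lambda_1(A_n) \lesssim \frac{1}{n} \sum_{p \,:\, p/n \in I} \min\!\left(\frac{\psi(n)}{|\beta_p|}, \sqrt{\psi(n)}\right) + O(\psi(n)).
\]

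Estimating this sum is where the curvature hypothesis pays dividends. The second difference of $p\mapsto nf(p/n)$ equals $\approx f''(p/n)/n$, so the sequence is near-quadratic with leading coefficient $\approx f''/(2n)$; a Weyl--van der Corput exponential sum bound (equivalently, Fourier-expanding $t\mapsto\min(\psi/t,\sqrt\psi)$ and using van der Corput bounds on $\sum_p e(k n f(p/n))$) controls the distribution of $\beta_p$ modulo $1$ and supplies a counting estimate $\#\{p : |\beta_p| \leq T\} \lesssim T n |I| + n^{\theta}$ for some $\theta < 1$. Dyadic decomposition of the $|\beta_p|$ and summation of the per-scale contributions then delivers the target $\lambda_1(A_n) \lesssim \psi(n) \log(1/\psi(n)) \lesssim \psi(n) \log n$. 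The main obstacle is precisely this equidistribution/exponential-sum estimate, and the delicate point is absorbing the resulting discrepancy error against the small-$|\beta_p|$ contributions; curvature is essential here, as for the linear case $f(x) = x$ one has $\|nx\|\cdot\|nf(x)\| = \|nx\|^2$ and the conclusion itself is false (it would demand the far stricter $\sum_n \sqrt{\psi(n)} < \infty$).
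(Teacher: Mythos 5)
The paper does not prove this statement: it is quoted verbatim from Badziahin--Levesley \cite{BL2007} as background, so there is no internal proof to compare against. Judged on its own terms, your outline is correct up to and including the hyperbola computation: the column decomposition, the reduction to $O(1)$ integers $q$ per column, and the bound $\lambda_1(A_n) \ll n^{-1}\sum_p \min(\psi(n)/|\beta_p|, \sqrt{\psi(n)}) + O(\psi(n))$ are all sound.

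The gap is in the final counting-and-summation step, and it is not merely ``delicate'' --- a purely per-$n$ strategy cannot close. Feeding $\#\{p : |\beta_p|\le T\} \ll Tn + n^{\theta}$ into your dyadic decomposition gives $\lambda_1(A_n) \ll \psi(n)\log(1/\psi(n)) + \sqrt{\psi(n)}\, n^{\theta-1}$: the error term appears at each of the $\asymp \log(1/\psi)$ scales with weight $\min(\psi 2^j, \sqrt{\psi})$, and these weights sum geometrically to $\asymp\sqrt{\psi}$. For $\sum_n \sqrt{\psi(n)}\,n^{\theta-1}$ to converge under the sole hypothesis $\sum_n\psi(n)\log n < \infty$ one needs $\theta < 1/2$; for $\theta=1/2$ the function $\psi(n) = \bigl(n\log^2 n\,(\log\log n)^2\bigr)^{-1}$ satisfies the hypothesis yet $\sum_n\sqrt{\psi(n)/n} = \sum_n (n\log n\log\log n)^{-1} = \infty$, and monotonicity does not help. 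But $\theta<1/2$ is unattainable: for $f(x)=x+x^2/2$ one has $\|nf(p/n)\| = \|p^2/(2n)\|$, so for $n=2m^2$ every $p$ divisible by $2m$ gives $\beta_p=0$, whence $\#\{p:\beta_p=0\}\gg n^{1/2}$ for infinitely many $n$; no pointwise-in-$n$ equidistribution or van der Corput estimate can beat this. The missing idea is to aggregate the rational-point count over denominators, using bounds of the shape $\sum_{n\le Q}\#\{p : \|nf(p/n)\|<\delta\} \ll \delta Q^2 + Q^{1+\epsilon}$ (the Vaughan--Velani/Huxley-type estimates on rational points near curved arcs), so that the exceptional denominators above are seen to be rare on average; this is precisely what \cite{BL2007} do, deducing the multiplicative statement from a weighted simultaneous convergence theorem via Gallagher's dyadic covering rather than from an individual bound on each $\lambda_1(A_n)$.
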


A few years ago, Beresnevich, Haynes and Velani \cite{BHV2020} established this type of result on the divergence side for vertical planar lines (these are degenerate). Recall that the \emph{diophantine exponent} of $\alp \in \bR$ is
\[
\ome(\alp) =
\sup \{ w: \exists^\infty n \in \bN \quad
\| n \alp \| < n^{-w} \}.
\]

\begin{thm} [Beresnevich--Haynes--Velani, 2016+/2020]
Let $x_1 \in \bR$ with $\ome(x_1) < 3$. Let $\psi: \bN \to [0, 1)$ be a non-increasing with
\[
\displaystyle
\sum_{n=1}^\infty 
\psi(n) \log n = \infty,
\]
and denote by $W_{2,x_1}^\times(\psi)$ the set of $x_2 \in [0,1)$ such that
\[
\| n x_1 \| \cdot \| nx_2 \| < \psi(n)
\]
has infinitely many solutions $n \in \bN$. Then
\[
\lam_1(W_{2,x_1}^\times(\psi))
= 1.
\]
\end{thm}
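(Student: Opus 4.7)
The strategy is to fix $x_1$ satisfying $\ome(x_1) < 3$, set $\Psi(n) = \psi(n)/\|n x_1\|$, and reduce the multiplicative problem to the one-dimensional (but non-monotonic) Khintchine-type question of showing that
\[
W^*(\Psi) := \{x_2 \in [0,1) : \|n x_2\| < \Psi(n) \text{ for infinitely many } n \in \bN\}
\]
has full Lebesgue measure. Since $W_{2,x_1}^\times(\psi) = W^*(\Psi)$, the theorem becomes a Khintchine-type statement for the non-monotonic approximation function $\Psi$, to which classical Khintchine does not apply directly.

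\textbf{Step 1 (arithmetic input).} I would first establish a lower bound of the form
\[
\sum_{n=1}^{N} \Psi(n) \;=\; \sum_{n=1}^{N} \frac{\psi(n)}{\|n x_1\|} \;\gg\; \sum_{n=1}^{N} \psi(n) \log n,
\]
whose right-hand side tends to infinity by hypothesis. For this I would exploit the continued-fraction expansion of $x_1$. Letting $q_1 < q_2 < \cdots$ be the convergent denominators, the three-distance theorem gives
\[
\sum_{n=1}^{q_k} \frac{1}{\|n x_1\|} \;\asymp\; q_k \log q_k,
\]
and the Diophantine hypothesis $\ome(x_1) < 3$ forces $q_{k+1} \ll q_k^{2+o(1)}$, so consecutive blocks $[q_k, q_{k+1})$ are regular enough that an Abel-summation argument against the non-increasing weight $\psi$ transfers this asymptotic into the displayed lower bound.

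\textbf{Step 2 (divergent Borel--Cantelli with overlap control).} To upgrade the divergence of $\sum \Psi(n)$ to $\lam_1(W^*(\Psi)) = 1$, I would apply a Gallagher-type divergent Borel--Cantelli lemma to the events $E_n = \{x_2 \in [0,1) : \|n x_2\| < \Psi(n)\}$, each of Lebesgue measure $\min(2\Psi(n), 1)$. The key ingredient is the overlap estimate
\[
\lam_1(E_m \cap E_n) \;\ll\; \Psi(m)\Psi(n) + \frac{\Psi(m) + \Psi(n)}{\lcm(m,n)} \qquad (m \ne n),
\]
which, after a dyadic decomposition and a standard divisor-sum average over $\lcm(m,n)$, yields the quasi-independence bound
\[
\sum_{m, n \le N} \lam_1(E_m \cap E_n) \;\ll\; \Bigl( \sum_{n \le N} \lam_1(E_n) \Bigr)^2.
\]
This gives $\lam_1(W^*(\Psi)) > 0$; localising the argument to an arbitrary subinterval $I \subset [0,1)$ and invoking a Cassels-type scaling then upgrades this to $\lam_1(W^*(\Psi) \cap I) = \lam_1(I)$, whence full measure.

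The main obstacle I anticipate is Step 1. The quantity $\|n x_1\|$ is highly oscillatory and dips to size $\asymp 1/q_{k+1}$ whenever $n$ is close to a multiple of $q_k$, so the averages $\sum \Psi(n)$ could in principle be dominated by a handful of exceptional $n$ on which $\psi(n)$ has already decayed too far. The condition $\ome(x_1) < 3$ is calibrated precisely so that the convergent denominators grow slowly enough to keep the contribution of these exceptional $n$ comparable to, rather than dominating, the contribution from generic $n$, leaving the $\log n$ factor intact. Relaxing this Diophantine restriction would break the clean balance underlying both the sum estimate and the overlap estimate of Step 2.
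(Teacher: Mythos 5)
This theorem is quoted in the paper from \cite{BHV2020} rather than proved there, so I am comparing your proposal with the known arguments. Your opening reduction --- rewriting the fibre $W^\times_{2,x_1}(\psi)$ as a one-dimensional $\limsup$ set $W^*(\Psi)$ with $\Psi(n)=\psi(n)/\|nx_1\|$ --- is exactly the Beresnevich--Haynes--Velani starting point, and your appeal to a Cassels/Gallagher zero--one law at the end is legitimate. But both substantive steps have gaps. In Step 1, the lower bound $\sum_{n\le N}\|nx_1\|^{-1}\gg N\log N$ in fact holds for \emph{every} irrational $x_1$ (no Diophantine condition needed), and partial summation against the non-increasing $\psi$ gives $\sum_{n\le N}\Psi(n)\gg\sum_{n\le N}\psi(n)\log n$ as you say; the problem is that the Borel--Cantelli argument needs $\sum_n\lambda_1(E_n)=\sum_n\min\{2\Psi(n),1\}=\infty$, and the truncation is not harmless: the uncapped sum can draw most of its mass from the sparse $n$ near multiples of continued-fraction denominators $q_j$ with $q_{j+1}$ large, where $\Psi(n)\gg 1$. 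Showing that the truncated sum still diverges is one of the places where $\omega(x_1)<3$ is genuinely used, and your proposal does not address it.

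The more serious gap is Step 2. The correct overlap bound for $E_n=\{x:\|nx\|<\Psi(n)\}$ is
\[
\lambda_1(E_m\cap E_n)\ll\Psi(m)\Psi(n)+\gcd(m,n)\min\left\{\frac{\Psi(m)}{m},\frac{\Psi(n)}{n}\right\},
\]
whereas your error term $(\Psi(m)+\Psi(n))/\lcm(m,n)$ is smaller than this by a factor of order $\min\{m,n\}$ and I do not see how to obtain it. More importantly, summing the true error term over $m,n\le N$ leads to expressions of the shape $\sum_d d\bigl(\sum_{d\mid m\le N}\sqrt{\Psi(m)/m}\bigr)^2$, and for non-monotonic $\Psi$ this is \emph{not} in general $O\bigl((\sum_{n\le N}\Psi(n))^2\bigr)$: that is precisely the Duffin--Schaeffer obstruction, and $\Psi=\psi/\|\cdot\,x_1\|$ is exactly the sort of spiky function (mass concentrated near multiples of the $q_j$) for which it is dangerous. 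Establishing quasi-independence on average here is the heart of the matter and is where the hypothesis $\omega(x_1)<3$ really enters --- indeed this is why the result is not known for all irrational $x_1$, and why subsequent works resorted to Bohr sets or discrepancy theory. Describing this step as ``a standard divisor-sum average'' skips the entire difficulty of the theorem.
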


Their results were generalised to higher dimensions and the inhomogeneous setting in \cite{C2018, CT2020, CT2024} using the structural theory of Bohr sets, and in \cite{Yu2023} using the discrepancy theory of irrational rotations, and a counting version was demonstrated in \cite{CT2023}. General planar lines were investigated in \cite{CY2024}, and we state the two most relevant findings below.

\begin{thm}
[Chow--Yang, 2019+/2024]
Let $\cL$ be a line in the plane. Then, for almost every $(\alp,\bet) \in \cL$ with respect to the induced Lebesgue measure,
\[
\liminf_{n \to \infty}
n (\log n)^2
\| n \alp \| \cdot 
\| n \bet \| = 0.
\]
\end{thm}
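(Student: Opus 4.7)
The plan is to run a divergence Borel--Cantelli argument on $\cL$, with the requisite measure and independence estimates supplied by the paper's moment transference principle. A countable exhaustion reduces the statement to showing that, for each fixed $\eps > 0$, the set of $(\alpha,\beta) \in \cL$ with $\|n\alpha\| \cdot \|n\beta\| < \eps/(n(\log n)^2)$ for infinitely many $n \in \bN$ has full $\lam_\cL$-measure. Writing $\psi(n) = \eps/(n(\log n)^2)$, note that $\sum_n \psi(n)\log n = \infty$, so this is a line-restricted analogue of Gallagher's divergence case for $k=2$.

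Axis-parallel lines I would dispatch by hand. On a vertical line $\{\alpha_0\} \times \bR$, split according to the Diophantine exponent $\ome(\alpha_0) \geq 1$: if $\ome(\alpha_0) > 1$, then along any sequence witnessing the exponent one has $n(\log n)^2 \|n\alpha_0\| \to 0$, and the liminf vanishes for every $\beta$; if $\ome(\alpha_0) = 1$, then $\ome(\alpha_0) < 3$ and the Beresnevich--Haynes--Velani theorem applied with weight $\psi$ finishes the job for almost every $\beta$. Horizontal lines are symmetric.

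For a non-axis-parallel line $\cL : \beta = p\alpha + q$ with $p \ne 0$, parametrise by $\alpha \in [0,1]$ and define
\[
E_n = \bigl\{ \alpha \in [0,1] : \|n\alpha\| \cdot \|n(p\alpha + q)\| < \psi(n) \bigr\}.
\]
A dyadic decomposition in $\|n\alpha\|$, combined with the joint equidistribution of $(n\alpha, np\alpha) \bmod 1$ along the segment of length $n\sqrt{1+p^2}$ that it traces in $\bT^2$, should give $\lam_1(E_n) \asymp \eps/(n\log n)$, hence $\sum_n \lam_1(E_n) = \infty$. The substantive step is a quasi-independence bound of the form $\lam_1(E_m \cap E_n) \lesssim \lam_1(E_m)\lam_1(E_n)$, on average over $m \ne n$; the Chung--Erd\H{o}s inequality then delivers $\lam_\cL(\limsup E_n) > 0$, and a zero-one law (invariance of the $\limsup$ event under $\alpha \mapsto \alpha + 1/N$ along a suitable subsequence, or a Gallagher-style tail triviality) lifts this to full measure.

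The main obstacle is the quasi-independence step when $p$ is an irrational of high Diophantine type. Fourier coefficients of $\ind_{E_n}$ supported on frequencies $(k_1, k_2)$ with $k_1 + pk_2$ small are nearly orthogonal to $\cL$, so they can concentrate mass on $\cL$ and inflate intersections $E_m \cap E_n$ far beyond the Lebesgue prediction. This is precisely what the paper's moment transference principle is designed to control: it converts an $L^1$ (Lebesgue) estimate on $\bT^2$ into a restricted estimate on $\cL$ via $L^2$ moment bounds on the relevant Fourier expansions, with error terms summable in $n$. Letting $\eps \to 0$ along a countable sequence then concludes.
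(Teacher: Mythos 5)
This theorem is quoted in the paper from \cite{CY2024}; the present paper contains no proof of it, so your proposal can only be judged on its own merits. The reduction to a fixed $\eps$, the treatment of axis-parallel lines via the exponent dichotomy and the Beresnevich--Haynes--Velani theorem, and the general shape (divergence Borel--Cantelli on the line) are all reasonable and match how the degenerate cases are handled in the literature. But the proof has a genuine gap exactly where you flag ``the main obstacle'': the quasi-independence estimate for a general line. Your resolution --- ``this is precisely what the paper's moment transference principle is designed to control'' --- does not hold up. The flat VTP (Theorem \ref{FlatVTP}) requires $k > 2\ome_2(\cH) + 4$, hence $k \ge 5$, and its proof of the $N_2$ bound explicitly uses $k \ge 3$ and a finite dual exponent; the flat ETP likewise imposes a Diophantine condition on the normal vector. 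Nothing in this paper's framework applies to the divergence problem for $k=2$, let alone to lines with Liouville slope, and the paper itself lists the planar curved divergence case as open and proves its planar line results (Theorem \ref{PlanarLinesConv}) only on the convergence side and only under Diophantine hypotheses. The theorem you are asked to prove holds for \emph{every} line with no Diophantine condition, and that unconditional strength is the whole point; the actual proof in \cite{CY2024} runs through effective equidistribution for unipotent orbits on $\mathrm{SL}_3(\bR)/\mathrm{SL}_3(\bZ)$, a fundamentally different and much heavier input than Fourier-analytic moment transference.

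Two smaller issues. First, your first-moment claim $\lam_1(E_n) \asymp \eps/(n\log n)$ is false for some rational lines: on $\beta = \alpha$ the set $E_n$ is $\{\|n\alpha\| < \sqrt{\psi(n)}\}$, of measure $\asymp \sqrt{\psi(n)} \gg 1/(n\log n)$, so the equidistribution heuristic for the traced segment cannot be invoked uniformly; rational and irrational slopes need separate treatment even at the level of $E_N(\lam_\cL)$. Second, the zero--one law you invoke at the end is not automatic: Gallagher's and Cassels' zero--one laws apply to the full ambient measure, not to the restriction of a $\limsup$ set to a line, and upgrading positive measure to full measure on $\cL$ requires a localized second-moment argument (running the divergence Borel--Cantelli on arbitrarily small subintervals), which in turn needs the quasi-independence input you have not supplied.
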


The \emph{simultaneous exponent} of $\bx = (x_1,\ldots,x_k) \in \bR^k$ is
\[
\ome(\bx) =
\sup \{ w: \exists^\infty n \in \bN \quad
\max \{ \| n x_1 \|,
\ldots, \| n x_k \| \}
< n^{-w} \},
\]
the \emph{dual exponent} is
\[
\ome^*(\bx) =
\sup \{ w: \exists^\infty \bn \in \bZ^k \quad
\| \bn \cdot \bx \|
< \| \bn \|_\infty^{-w} \},
\]
and the \emph{multiplicative exponent} is
\[
\ome^\times(\bx) =
\sup \{ w: \exists^\infty n \in \bN \quad
\| n x_1 \| \cdots \| n x_k \|
< n^{-w} \}.
\]

\begin{thm}
[Chow--Yang, 2019+/2024]
\label{PlanarLinesConv}
Let $a,b \in \bR$ with 
$\ome^*(a,b) < 5$ or $\ome^\times(a,b) < 4$, let
\[
\cL = \{ (x_1, x_2) \in \bR^2:
x_1 = a x_2 + b \},
\]
and let $\psi: \bN \to [0, 1)$ be a decreasing function such that
\[
\sum_{n=1}^\infty 
\psi(n) \log n < \infty.
\]
Then, for almost every $(\alp,\bet) \in \cL$ with respect to the induced Lebesgue measure, there exist at most finitely many $n \in \bN$ such that
\[
\| n \alp \| \cdot 
\| n \bet \| < \psi(n).
\]
\end{thm}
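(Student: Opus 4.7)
The plan is to combine a convergence Borel--Cantelli argument with a Fourier-analytic transference of Gallagher's theorem from $\bT^2$ to the line. Parametrise $\cL$ by $\beta \in [0,1)$ via $\alpha = a\beta + b$, and set
\[
E_n := \{\beta \in [0,1) : \|n(a\beta+b)\|\cdot\|n\beta\| < \psi(n)\}.
\]
It suffices to prove $\sum_n \lam_1(E_n) < \infty$. Writing the ambient bad set as $F_n := \{\bx \in \bT^2 : \|nx_1\|\cdot\|nx_2\| < \psi(n)\}$, we have $\lam_1(E_n) = \int_0^1 \ind_{F_n}(a\beta+b,\beta)\,d\beta$, and the planar measure satisfies $\lam_2(F_n) \asymp \psi(n)\log(1/\psi(n))$, which is precisely the summable quantity from Theorem~\ref{GallagherThm}. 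The task is thus to show that the line average differs from $\lam_2(F_n)$ by an error summable in $n$.

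I would replace $\ind_{F_n}$ with a smooth majorant $f_n$ of comparable integral, tailored to the hyperbolic-cross geometry of $F_n$. Since $F_n$ is $n^{-1}\bZ^2$-periodic, the Fourier series of $f_n$ is supported on $n\bZ^2$, and substitution of the line parametrisation yields
\[
\int_0^1 f_n(a\beta+b,\beta)\,d\beta
= \hat f_n(\bzero)
+ \sum_{\bk \neq \bzero} \hat f_n(n\bk)\,e(nk_1 b) \int_0^1 e\bigl(n(ak_1+k_2)\beta\bigr)\,d\beta.
\]
The zero-frequency term is the summable main term. For the non-zero frequencies we have $\bigl|\int_0^1 e(n(ak_1+k_2)\beta)\,d\beta\bigr| \ll \min(1, 1/(n|ak_1+k_2|))$, which is harmless unless $|ak_1 + k_2|$ is extremely small. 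The Diophantine hypothesis is invoked when summing in $n$: the phase $e(nk_1 b)$ allows partial summation and links smallness of $|ak_1+k_2|$ to that of $\|nb+(\text{integer})\|$; then $\ome^*(a,b) < 5$ gives $\|qa + rb\| \gg \max(|q|,|r|)^{-5+\eps}$, while $\ome^\times(a,b) < 4$ gives $\|na\|\cdot\|nb\| \gg n^{-4+\eps}$, each of which is enough to close the arising multi-index sum.

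The main obstacle is the final dyadic accounting over $(n,\bk)$ stratified by the sizes of $n$, $|k_1|$, $|k_2|$, and $|ak_1+k_2|$: the Fourier decay of $\hat f_n(n\bk)$ is best in the balanced regime $|k_1|\asymp|k_2|$ and degrades along the axes, while the Diophantine lower bound is also sharpest for balanced indices, and the two have to cooperate with the integral factor to produce a geometric sum. The exponents $5$ and $4$ are precisely the critical thresholds at which these bounds align and no logarithmic factor is lost beyond what $\sum_n \psi(n)\log n < \infty$ permits. A secondary technical step is the construction of the mollified majorant $f_n$, whose Fourier coefficients decay at the correct scale $\psi(n)$ while keeping $\int f_n \ll \psi(n)\log(1/\psi(n))$; this is a core ingredient of the moment transference machinery developed in the paper.
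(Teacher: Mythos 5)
A point of context first: Theorem \ref{PlanarLinesConv} is not proved in this paper at all --- it is quoted from Chow--Yang \cite{CY2024}, where the argument runs through an effective Ratner-type equidistribution theorem for unipotent orbits in $\mathrm{SL}_3(\bR)/\mathrm{SL}_3(\bZ)$, not through Fourier series on the torus. The closest in-paper analogue of your strategy is the flat ETP (Theorems \ref{FlatETP}, \ref{FlatSmoothETP} and \ref{FlatConvergence}); specialised to $k=2$ and the line $x_1=ax_2+b$ with $a\ne 0$, it yields the same conclusion but under the condition $\ome(a)<2$ on the slope alone, which is incomparable with $\ome^*(a,b)<5$ or $\ome^\times(a,b)<4$. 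That discrepancy is the symptom of the real gap in your sketch.

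The gap is in the non-zero-frequency sum. Your error term is essentially $\sum_{\bk\ne\bzero}|\hat f_n(n\bk)|\min\bigl(1,(n|ak_1+k_2|)^{-1}\bigr)$ decorated by the phase $e(nk_1b)$, and everything hinges on the regime where $\|k_1a\|$ is small. If you take absolute values termwise, $b$ disappears entirely and the best possible outcome is a condition on the irrationality measure of $a$ alone --- this is exactly what Theorem \ref{FlatSmoothETP} does, at the cost of $\ome(a)<2$. To obtain a joint condition on $(a,b)$ you must, as you say, exploit cancellation over $n$ via $e(nk_1b)$; but that is the entire content of the theorem and your sketch does not carry it out. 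Concretely: (i) for fixed $\bk$ the summand depends on $n$ both through $\hat f_n(n\bk)$ (which involves $\psi(n)$ and an $n$-dependent box decomposition) and through the oscillating factor $(e(n(k_1a+k_2))-1)/(2\pi i n(k_1a+k_2))$, so partial summation needs variation bounds you have not stated; (ii) it is never explained how the quantity $\|qa+rb\|$ for a general integer vector $(q,r)$ --- which is what $\ome^*(a,b)$ controls --- arises from phases $e(nk_1b)$ and denominators $\|k_1a\|$; (iii) the assertion that the thresholds come out to exactly $5$ and $4$ is stated, not derived. The dyadic accounting you flag as ``the main obstacle'' is precisely where the theorem lives, so as written this is a plan rather than a proof. (A minor, fixable omission: the main term $\psi(n)\log(1/\psi(n))$ is only controlled by $\sum_n\psi(n)\log n<\infty$ after first disposing of those $n$ with, say, $\psi(n)<n^{-4}$ by a direct measure bound on the line.)
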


The diophantine condition in
Theorem \ref{PlanarLinesConv} was relaxed by Huang \cite{Hua} to $\ome(a,b) < 2$ and $a \ne 0$. For more about diophantine exponents and the inequalities between them, we refer the reader to \cite{CGGMS2020}.

\bigskip

Though the above investigations have much to say about the planar case $k=2$, the cases $k \ge 3$ have been largely out of reach. Our results, on the multiplicative theory of diophantine approximation on hypersurfaces in $\bR^k$, can be summarised as follows.

\medskip

\begin{center}
\begin{tabular} {c|c|c}
& curved & flat \\
\hline
convergence & all $k$ & all $k$ \\
\hline
divergence & $k \ge 5$ & $k \ge 9$
\end{tabular}
\end{center}

\medskip

In the curved convergence setting, our method only works for $k \ge 3$, but we can invoke the theorem of Badziahin and Levesley \cite{BL2007} when $k=2$. We note that the curved divergence problem remains open in the case $k=2$.

For $\psi: \bN \to [0,1)$ and $\by \in \bR^k$, let $W_k^\times(\psi, \by)$ be the set of $\bx \in [0,1)^k$ such that
\[
\| nx_1 - y_1 \| \cdots
\| nx_k - y_k \| < \psi(n)
\]
has infinitely many solutions $n \in \bN$. Our framework is robust enough to handle such an inhomogeneous shift.

\subsubsection{Curved hypersurfaces}

In what follows, for any smooth submanifold $\cM$ of $\mathbb{R}^k,$ a \emph{smooth measure} is a probability measure induced by a smooth function on $\cM$. For example, if $\cM$ is compact, then the natural Lebesgue probability measure on $\cM$ is itself a smooth measure. If $\cM$ is unbounded, then there is no Lebesgue probability measure. To bypass this issue, we can choose a bounded open subset $\cU\subset\cM$ and consider smooth measures supported on $\cU.$ 

\begin{thm} 
[Curved convergence theory]
\label{CurvedConvergence}
Let $\cU$ be a bounded open subset of a smooth hypersurface $\cM$ in $\bR^k$, in which the Gaussian curvature is non-zero, and let $\mu$ be a smooth measure supported on a compact subset of $\cU$. Let $\by \in \bR$, and let $\psi: \bN \to [0,1)$ with
\begin{equation} 
\label{SeriesConverges}
\sum_{n=1}^\infty 
\psi(n) (\log n)^{k-1} 
< \infty.
\end{equation}
Then $\mu$ almost no point on $\cU$ lies in $W_k^\times(\psi, \by)$.
\end{thm}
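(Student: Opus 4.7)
The plan is a convergence Borel--Cantelli argument powered by a Fourier-analytic transference principle. Setting
\[
E_n = \{ \bx \in [0,1)^k : \|nx_1 - y_1\| \cdots \|nx_k - y_k\| < \psi(n) \},
\]
it suffices to show that $\sum_n \mu(E_n) < \infty$. The key step is to write $\mathbf{1}_{E_n}(\bx) = g_n(n\bx - \by)$, where $g_n$ is the $\bZ^k$-periodic function defined on $[-1/2, 1/2)^k$ by $g_n(\bz) = \mathbf{1}\{|z_1|\cdots |z_k| < \psi(n)\}$. Expanding $g_n$ in Fourier series and integrating against $\mu$, we obtain the moment transference identity
\[
\mu(E_n) \;=\; \sum_{\bxi \in \bZ^k} \hat{g}_n(\bxi)\, e^{-2\pi i \bxi \cdot \by}\, \hat{\mu}(-n\bxi),
\]
which is the central bridge between Lebesgue-style data (encoded by $\hat{g}_n$) and data for the curved measure $\mu$ (encoded by $\hat{\mu}$).

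The contribution of $\bxi = \mathbf{0}$ is the ``main'' one: $\hat{g}_n(\mathbf{0})$ is exactly the Lebesgue measure of $\{\bz : |z_1|\cdots|z_k| < \psi(n)\} \cap [-1/2,1/2)^k$, and the familiar Gallagher-type computation (cf.\ Theorem~\ref{GallagherThm}) bounds this by $O\bigl(\psi(n)(\log n)^{k-1}\bigr)$, which is summable under \eqref{SeriesConverges}. For the remaining non-zero frequencies, the non-vanishing Gaussian curvature of $\cM$ combined with stationary phase yields
\[
|\hat{\mu}(\bxi)| \;\ll\; (1 + |\bxi|)^{-(k-1)/2},
\]
so the error reduces to showing
\[
\sum_{n=1}^\infty \sum_{\bxi \in \bZ^k \setminus \{\mathbf{0}\}} |\hat{g}_n(\bxi)| \, (n|\bxi|)^{-(k-1)/2} \;<\; \infty.
\]
To bound $|\hat{g}_n(\bxi)|$, I would decompose $\{|z_1|\cdots|z_k| < \psi(n)\}$ into dyadic pieces $|z_i| \asymp 2^{-j_i}$ subject to $\sum_i j_i \gtrsim \log(1/\psi(n))$. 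On each piece the indicator factorises as a product of slab indicators, and one exploits the standard bound $|\hat{\mathbf{1}}_{[-\delta,\delta]}(\xi)| \ll \min(\delta, |\xi|^{-1})$ coordinatewise.

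The main obstacle is to execute the dyadic summation cleanly: the ``multiplicative'' region $\{|z_1| \cdots |z_k| < \psi(n)\}$ has long narrow fingers along each coordinate hyperplane, and the cross-terms produced by the dyadic decomposition create $k$-fold logarithmic losses. One must verify that, after integrating these losses, the curvature gain of $n^{-(k-1)/2}$ still dominates a power saving in $n$ that absorbs both the logarithms and a mild tail in the $\bxi$-sum. This is precisely the step that weakens as $k$ decreases and seems to break in the planar case $k = 2$, which is why we must invoke Badziahin--Levesley in that regime. The inhomogeneous shift $\by$ is harmless throughout, since it contributes only a unimodular twist $e^{-2\pi i \bxi \cdot \by}$ inside the Fourier sum and does not affect any absolute-value bound.
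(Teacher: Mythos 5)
Your overall strategy coincides with the paper's: a first Borel--Cantelli argument, a Fourier expansion of the hyperbolic neighbourhood decomposed into $O(\log^{k-1}(1/\psi(n)))$ dyadic boxes, the observation that the relevant frequencies are the multiples of $n$, and the stationary-phase decay $|\hat\mu(\bxi)|\ll|\bxi|^{-(k-1)/2}$ coming from non-vanishing Gaussian curvature. However, there is a genuine gap at the foundational step. Your ``moment transference identity'' $\mu(E_n)=\sum_{\bxi}\hat g_n(\bxi)e(-\bxi\cdot\by)\hat\mu(-n\bxi)$ is obtained by integrating the Fourier series of the \emph{sharp} indicator $g_n$ term by term against $\mu$. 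But $\mu$ is a singular measure on a hypersurface: $\hat\mu\notin\ell^2(\bZ^k)$ (indeed $\sum_{\bxi}|\bxi|^{-(k-1)}=\infty$), and the Fourier series of the indicator of a box --- let alone of the hyperbolic region --- is not absolutely convergent, so neither Parseval nor dominated convergence licenses the interchange. This is exactly why the paper replaces $1_{A_n^\times}$ by a smooth \emph{majorant} $f_n\gg 1_{A_n^\times}$, built from bump functions on the slightly enlarged boxes $\fC_n\supseteq A_n^\times$: the resulting Fourier series converges absolutely and uniformly, the identity becomes legitimate, and the coefficients are essentially supported on the dual boxes with rapid decay outside, rather than carrying the slowly decaying $\min(\delta,|\xi|^{-1})$ tails that you would otherwise have to sum. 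For the convergence direction only a majorant is needed, so this fix is available, but it is not optional.

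The second issue is that the quantitative heart of the argument is left as an acknowledged ``obstacle'' rather than executed. The paper's Theorem \ref{CurvedETP} does this work: it sums $|\hat\mu|$ over the non-zero multiples of $n$ lying in the anisotropic dual box $[-1/d_1,1/d_1]\times\cdots\times[-1/d_k,1/d_k]$ by slicing into shells according to which side lengths $1/d_j$ have been exceeded, yielding an error of relative size $(d_1\cdots d_k)^{-(1-\sigma/k)}/n^k$ with $\sigma=(k-1)/2$. Making this $o(1)$ forces a lower truncation of $\psi$ --- the paper passes to $\tilde\psi(n)=\max\{\psi(n),(n\log^{k+1}n)^{-1}\}$ and needs $\sigma>k/(k+1)$, which is where the restriction $k\ge3$ enters (with Badziahin--Levesley covering $k=2$, as you correctly anticipate). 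Relatedly, your bound $\hat g_n(\bzero)\ll\psi(n)(\log n)^{k-1}$ is really $\psi(n)\log^{k-1}(1/\psi(n))$ and requires a separate (easy) treatment of the $n$ with $\psi(n)$ super-polynomially small. None of these steps is insurmountable, but together with the missing smoothing they constitute the actual proof rather than its outline.
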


\begin{remark}
If the Gaussian curvature is non-vanishing on $\cM$ then it follows that, with respect to the induced Lebesgue measure, almost no point on $\cM$ lies in $W^\times_k(\psi, \by)$.
\end{remark}

\begin{thm} 
[Curved divergence theory]
\label{CurvedDivergence}
Let $k \ge 5$. Let $\cU \subset [0,1)^k$ be an open subset of a smooth hypersurface $\cM$ in $\bR^k$, in which the Gaussian curvature is non-zero, and let $\mu$ be a smooth measure supported on a compact subset of $\cU$. Let $\by \in \bR$, and let $\psi: \bN \to [0,1)$ be a non-increasing function such that
\begin{equation} 
\label{SeriesDiverges}
\sum_{n=1}^\infty 
\psi(n) (\log n)^{k-1} = \infty.
\end{equation}
Then $\mu$ almost every point on $\cU$ lies in $W_k^\times(\psi, \by)$.
\end{thm}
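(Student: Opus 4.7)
Write
\[
A_n = \{\bx \in [0,1)^k : \|nx_1 - y_1\| \cdots \|nx_k - y_k\| < \psi(n)\};
\]
the task is to show $\mu(\limsup_n A_n) = 1$. My plan is to run a divergence Borel--Cantelli argument (Chung--Erd\H{o}s) directly against $\mu$, after converting the relevant Lebesgue-theoretic inputs (which lie behind Theorem \ref{GallagherThm}) into inputs for $\mu$ via a Fourier-analytic moment transference. Concretely I need the first moment bound $\sum_{n \le N} \mu(A_n) \to \infty$ together with the quasi-independence bound $\sum_{n, m \le N} \mu(A_n \cap A_m) \ll \bigl(\sum_{n \le N} \mu(A_n)\bigr)^2$; running these same estimates on any subregion of $\cU$ and applying Chung--Erd\H{o}s locally promotes positive measure of $\limsup A_n$ to full measure.

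For the first moment I would expand $\mathbf{1}_{A_n}$ as a Fourier series on $\mathbb{T}^k$. Because $A_n$ is a union of translates, by vectors of the form $(\by + \bj)/n$ with $\bj \in \bZ^k$, of a narrow product-of-intervals neighbourhood, its Fourier coefficients are explicit, supported on the frequencies $n\bxi$ with $\bxi \in \bZ^k$, and dominated by products of sinc factors. Integrating term by term against $\mu$ yields
\[
\mu(A_n) = \lam_k(A_n) + \sum_{\bxi \neq \bzero} \widehat{\mathbf{1}_{A_n}}(n\bxi)\,\widehat{\mu}(-n\bxi),
\]
with main term $\lam_k(A_n) \asymp \psi(n)(\log n)^{k-1}$ (the Lebesgue computation underlying Theorem \ref{GallagherThm}). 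The error is controlled by $|\widehat{\mu}(\boldsymbol{\xi})| \ll |\boldsymbol{\xi}|^{-(k-1)/2}$, the stationary-phase decay available for smooth measures on a hypersurface of non-vanishing Gaussian curvature. Summing over $n \le N$ and invoking \eqref{SeriesDiverges} yields first-moment divergence.

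For the second moment I would similarly expand the product $\mathbf{1}_{A_n}\mathbf{1}_{A_m}$, producing a Lebesgue main term plus Fourier errors involving $\widehat\mu$ at frequencies $n\bxi + m\bzeta$. The main term is dominated by the Gallagher-type quasi-independence estimate $\lam_k(A_n \cap A_m) \ll \lam_k(A_n) \lam_k(A_m)$ (valid for $n \neq m$), and the off-diagonal Fourier errors are handled by the same $(k-1)/2$ decay of $\widehat\mu$ against the slowly decaying sinc coefficients of the indicators.

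The main obstacle is precisely this second moment transference: one has to bound double lattice sums of the shape
\[
\sum_{\bxi, \bzeta \in \bZ^k} \widehat{\mathbf{1}_{A_n}}(n\bxi)\, \widehat{\mathbf{1}_{A_m}}(m\bzeta)\, \widehat{\mu}(-n\bxi - m\bzeta),
\]
where the exponent $(k-1)/2$ of $\widehat\mu$-decay must compensate for the polylog thickness of the supports of the sinc-coefficients of the two indicators. Balancing the absolute convergence threshold of these multivariable sums against the available decay is where the hypothesis $k \ge 5$ enters; it is precisely the regime in which the curved divergence theorem becomes reachable by this Fourier-analytic approach. Once the two moment estimates are in hand, the Chung--Erd\H{o}s lemma and the local upgrade finish the argument.
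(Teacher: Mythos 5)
Your high-level architecture --- Fourier-analytic transference of the first and second moments from Lebesgue measure to $\mu$, followed by a divergence Borel--Cantelli argument --- is the same as the paper's, but the two steps you flag as routine are exactly where the work lies, and as written they do not close. First, expanding the raw indicator $1_{A_n^\times}$ gives Fourier coefficients decaying only like products of sinc factors, i.e.\ like $\prod_j \min(d_j, |\xi_j|^{-1})$; paired with $|\hat\mu(\bxi)| \ll |\bxi|^{-(k-1)/2}$, the resulting lattice sums are nowhere near controllable with an error smaller than the tiny main term $\psi(n)(\log n)^{k-1}$. The paper avoids this by (i) decomposing the hyperbolic region into $\asymp (\log n)^{k-1}$ dyadic boxes $A_n(\bd)$ and (ii) smoothing each box with bump functions, so that the normalised coefficients decay faster than any polynomial outside the dual box $\cR_n^\vee$ (Lemma \ref{EssentialSupport}); even then the first moment requires a non-trivial dyadic count of the lattice points $n \mid \bxi$ in $2^u \cR_n^\vee$ weighted by $|\hat\mu|$. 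Second, and more seriously, the second moment cannot be obtained by ``balancing the absolute convergence threshold against the available decay'': the dangerous frequencies are those with $\bxi + \bxi'$ small but nonzero, where $\hat\mu$ supplies no decay at all, and these must be counted arithmetically using the divisibility constraints $n \mid \bxi$, $n' \mid \bxi'$. In the paper this is the proof of Theorem \ref{CurvedVTP}: one counts integer solutions of $T\bn = \bx$ with $\bxi = n\bt$, $\bxi' = n'\bt'$, splits into the cases where $\bt, \bt'$ are parallel or not, and applies divisor bounds; the hypothesis $k \ge 5$ enters precisely as the requirement $\sig = (k-1)/2 \ge 2$ in that counting argument, not as an abstract convergence threshold. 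Your proposal names this step as ``the main obstacle'' but supplies no mechanism for overcoming it.

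Two smaller points. The pairwise bound $\lam(A_n \cap A_m) \ll \lam(A_n)\lam(A_m)$ for $n \ne m$ is false: Gallagher-type quasi-independence only holds on average, with an additive error depending on $\gcd(n,m)$ (Theorems \ref{thm: quasi-independence part one} and \ref{thm: quasi-independence part two}), and establishing even that for the inhomogeneous, box-decomposed sets requires Property $\cP_n(\by)$ and the strongly star-shaped structure; relatedly, the passage from $\psi(n)$ to $\psi(2^m)$ in the inscribed boxes uses monotonicity and must be reconciled at the end. Your ``local Chung--Erd\H{o}s plus density'' upgrade to full measure is a legitimate alternative to the paper's route (which instead makes the quasi-independence constant $C$ tend to $1$ by sharpening the bump functions, Lemma \ref{lem: divergence constant}), but it only becomes available once the moment estimates are actually proved.
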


\begin{remark}
If the Gaussian curvature is non-vanishing on $\cM$ then it follows that, with respect to the induced Lebesgue measure, almost every point on $\cM$ lies in $W^\times_k(\psi, \by) \mmod \bZ^k$.
\end{remark}

\subsubsection{Affine hyperplanes}

Let $k \ge 2$ be an integer,
let $\cH$ be an affine hyperplane in $\bR^k$ with normal vector 
$(1,\alp_2, \ldots, \alp_k)$, and let $\mu$ be the induced Lebesgue probability measure on $\cH \cap [0,1)^k$.

\begin{thm} 
[Flat convergence theory]
\label{FlatConvergence}
Let $\psi: \bN \to [0,1)$ with
\eqref{SeriesConverges}. Let $\by \in \bR$, and assume that 
\begin{equation}
\label{ConvDiop}
\ome(\alp_2, \ldots, \alp_k) < 
\ell,
\end{equation}
where
\[
\ell = 1 +
\# \{ 2 \le i \le k:
\alp_i \ne 0 \}.
\]
Then
$
\mu(W_k^\times(\psi, \by)) = 0.
$
\end{thm}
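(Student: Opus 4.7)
The plan is to verify the convergence half of Borel--Cantelli for the sets
\[
A_n = \{\bx \in [0,1)^k : \|nx_1 - y_1\| \cdots \|nx_k - y_k\| < \psi(n)\};
\]
showing $\sum_n \mu(A_n) < \infty$ forces $\mu(W_k^\times(\psi,\by)) = 0$. The strategy is a Fourier-analytic transference: compare $\mu(A_n)$ to its Lebesgue analogue $\lam_k(A_n)$, with the discrepancy governed by the Diophantine hypothesis on $(\alp_2,\ldots,\alp_k)$.

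Parametrising $\cH$ on the torus by $\Phi:(x_2,\ldots,x_k)\mapsto(c - \sum_{i\ge 2}\alp_i x_i,\,x_2,\ldots,x_k)\bmod\bZ^k$, the pushforward $\tilde\mu=\Phi_*\lam_{k-1}$ (comparable to $\mu$ up to a Jacobian constant) has Fourier coefficients
\[
\widehat{\tilde\mu}(\mathbf m) = e(-m_1 c)\prod_{i=2}^k \int_0^1 e((m_1\alp_i - m_i)u)\,du,
\]
which vanish whenever $\alp_i=0$ and $m_i\ne 0$, and otherwise decay like $\prod_{i:\alp_i\ne 0}\min(1,\,1/(\pi|m_1\alp_i - m_i|))$. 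Since $A_n$ is $(1/n)\bZ^k$-periodic, writing $\mathbf 1_{A_n}(\bx)=\mathbf 1_{\tilde A_n}(n\bx\bmod 1)$ for $\tilde A_n=\{\bu\in\bT^k : \prod_i\|u_i - y_i\|<\psi(n)\}$, Plancherel gives
\[
\tilde\mu(A_n) = \sum_{\mathbf r\in\bZ^k}\widehat{\mathbf 1_{\tilde A_n}}(\mathbf r)\,\widehat{\tilde\mu}(-n\mathbf r).
\]
The main term $\mathbf r=\mathbf 0$ equals $\lam_k(A_n)\ll\psi(n)(\log n)^{k-1}$ by a Gallagher-type estimate, which is summable in $n$ by \eqref{SeriesConverges}.

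It remains to control the error from $\mathbf r\ne \mathbf 0$. The support of $\widehat{\tilde\mu}$ forces $r_i=0$ whenever $\alp_i=0$, and $r_1\ne 0$ (else the remaining coordinates also vanish). For each $i$ with $\alp_i\ne 0$, the dominant mode has $r_i$ the nearest integer to $r_1\alp_i$, contributing a factor $\min(1,\,1/(\pi n\|r_1\alp_i\|))$ to $\widehat{\tilde\mu}$, with further-away $r_i$ decaying as $1/(n|r_1\alp_i-r_i|)$. A \emph{resonant} configuration (all these factors bounded below) requires $\|r_1\alp_i\|\lesssim 1/n$ for every $i$ with $\alp_i\ne 0$, but the hypothesis $\ome(\alp_2,\ldots,\alp_k)<\ell$ yields $\max_i\|r\alp_i\|\gtrsim |r|^{-\ell+\eta}$ for some $\eta>0$ and all large $|r|$, hence $|r_1|\gtrsim n^{1/(\ell-\eta)}$, producing a polynomial-in-$n$ gain. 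After a dyadic decomposition of $\tilde A_n$ into annular boxes $\tilde B_{\vec j}=\prod_i\{\|u_i-y_i\|\asymp 2^{-j_i}\}$ whose Fourier coefficients factorise as $\prod_i\min(2^{-j_i},\,C/|r_i|)$, the double sum over dyadic levels $\vec j$ and Fourier modes $\mathbf r$ satisfying this resonance constraint yields an error summable in $n$.

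The main technical obstacle is this last step: organising the double sum so that the polynomial-in-$n$ gain from $|r_1|\gtrsim n^{1/(\ell-\eta)}$ overcomes the $(\log n)^{O(k)}$ cost from the Fourier tails and the number of dyadic levels. This is, presumably, the content of the ``moment transference principle'' advertised in the title---one identifies a class of test functions (dyadic indicators or suitable mollifications) for which the passage from $\lam_k$ to $\tilde\mu$ is effective exactly when $\ome(\alp_2,\ldots,\alp_k)<\ell$. Once this is in hand, $\sum_n \mu(A_n)<\infty$ and Borel--Cantelli conclude the proof.
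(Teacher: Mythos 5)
Your overall architecture matches the paper's: first Borel--Cantelli, a Fourier-space comparison of $\mu(A_n)$ with $\lam_k(A_n)$, a dyadic decomposition of the hyperbolic region into boxes, the observation that $\hat\mu$ is concentrated on a tube in the direction $\balp$ (so that non-trivial frequencies $n\mathbf{r}$ must satisfy $|r_1\alp_i-r_i|\ll 1/n$), and the diophantine hypothesis to push the resonant frequencies out to $|r_1|\gg n^{1/(\ell-\eta)}$. This is exactly the paper's ``flat ETP'' (Theorem \ref{FlatETP}) combined with Lemma \ref{FunctionalConvergence}. However, there are two genuine gaps.

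First, the step you defer as ``the main technical obstacle'' is the actual content of the proof, and the gain you claim does not close the argument as stated. The Fourier coefficient of a dyadic box in the $i$-th coordinate is $\min(2^{-j_i},1/|r_i|)$; the lower bound $|r_1|\gtrsim n^{1/(\ell-\eta)}$ produces a saving \emph{only} in coordinates where $2^{j_i}<|r_i|$. Nothing in your setup prevents $\psi(n)$ from being extremely small, in which case every box side can satisfy $2^{j_i}\gg n^{1/(\ell-\eta)}$; then each resonant frequency contributes the full main-term size $\prod_i 2^{-j_i}$ with no decay at all, and the sum over resonant $r_1$ and over the $\asymp(\log(1/\psi(n)))^{k-1}$ boxes is not controlled. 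The paper's fix is the preliminary reduction $\psi(n)\ge n^{-\ell/\varpi}$ for some $\varpi\in(\ome(\alp_2,\ldots,\alp_k),\ell)$, obtained by replacing $\psi$ with $\psi(n)+n^{-\ell/\varpi}$ --- this is precisely where the hypothesis \eqref{ConvDiop} (as opposed to merely $\ome<\infty$) is consumed, since $\sum_n n^{-\ell/\varpi}(\log n)^{k-1}<\infty$ requires $\varpi<\ell$. By pigeonhole this forces $\max\{nd_j:1\le j\le\ell\}\gg n^{-1/\varpi}$, which is the scale hypothesis of Theorem \ref{FlatETP} and is what makes the resonance count beat the trivial bound. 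Without this reduction your diophantine hypothesis is never genuinely used, and the error sum need not converge.

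Second, a smaller but real issue: sharp indicators of boxes have only $1/|r_i|$ Fourier decay, so the tails of your sum over $r_i$ away from the resonant mode produce logarithmic divergences (e.g.\ $\sum_{r_i}\min(2^{-j_i},1/|r_i|)\cdot(n|r_1\alp_i-r_i|)^{-1}$ is not absolutely summable without further care), and the interchange of the Fourier series with the integral against $\tilde\mu$ is not justified for non-smooth indicators. The paper avoids both problems by smoothing each box with bump functions (\S\ref{sec: rectangle decomposition}), obtaining rapid decay outside the dual box, and by mollifying the surface measure ($\mu_\eta\to\mu_0$). You gesture at ``suitable mollifications'' but this needs to be built in from the start, since Lemma \ref{FunctionalConvergence} then requires the smoothed $f_n$ to dominate $1_{A_n^\times}$ from below (hence the enlarged decomposition $\fC_n$), not merely to approximate it.
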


\begin{remark} 
[Super-strong extremality]
\label{RemarkOnKleinbock}
For $\psi(n) = n^{-\tau}$, where $\tau > 1$, the result was obtained by Kleinbock \cite[Corollary 5.7]{Kle2003}. Kleinbock had a slightly more relaxed diophantine condition, and showed it to be necessary and sufficient for $\cH$ to have this property, known as \emph{strong extremality}. We do not believe that our diophantine condition is optimal, as an optimal criterion should take the location of $\cH$ into account. However, in the case where $\cH$ passes through the origin, our condition is optimal up to the endpoint, by Kleinbock's result.
Moreover, our threshold \eqref{SeriesConverges} is sharp and matches Gallagher's. We also have a corresponding divergence theory
(see below).
\end{remark}

\begin{thm} 
[Flat divergence theory]
\label{FlatDivergence}
Let $\psi: \bN \to [0,1)$ be non-increasing function with \eqref{SeriesDiverges}.
Let $\by \in \bR$,
and assume that 
\begin{equation}
\label{FlatDivergenceCondition}
k > 4 +
2\max \{
\ome^*(\alp_i, \alp_j): 
2 \le i < j \le k
\}.
\end{equation}
Then
$
\mu(W_k^\times(\psi, \by)) = 1.
$
\end{thm}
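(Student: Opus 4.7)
The plan is to establish $\mu(W_k^\times(\psi, \by)) = 1$ via a divergence-type Borel--Cantelli argument. For each $n \in \bN$, set $A_n = \{\bx \in [0,1)^k : \|nx_1 - y_1\| \cdots \|nx_k - y_k\| < \psi(n)\}$, so $W_k^\times(\psi, \by) = \limsup_n A_n$. I aim for the quasi-independence estimate
$$\sum_{m, n \le N} \mu(A_m \cap A_n) \le (1 + o(1)) \bigg(\sum_{n \le N} \mu(A_n)\bigg)^2$$
together with $\sum_n \mu(A_n) = \infty$; Chung--Erd\H{o}s then forces $\mu(\limsup_n A_n) = 1$. (If only $\le C(\cdot)^2$ is achievable, one upgrades positive measure to full measure by running the same estimates on $\mu$ restricted to every small open subset of $\cU$ and appealing to the Lebesgue density theorem.)

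Both moments are to be produced by the \emph{moment transference principle} announced in the introduction, which I implement via Fourier analysis. Parametrise $\cH$ by $(x_2, \ldots, x_k)$, so that $\bx = (c - \sum_{j \ge 2} \alp_j x_j, x_2, \ldots, x_k)$ and $\mu$ has a smooth compactly supported density $\phi$ on $\bR^{k-1}$; a direct computation gives
$$\hat\mu(\bn) = e^{-2\pi i n_1 c} \, \hat\phi(n_2 - n_1 \alp_2, \ldots, n_k - n_1 \alp_k),$$
which has Schwartz decay outside the line spanned by $(1, \alp_2, \ldots, \alp_k)$. Writing $\ind_{A_n}(\bx) = g_n(n\bx - \by)$ for an appropriate $\bT^k$-periodic function $g_n$ and expanding in Fourier series gives
$$\mu(A_n) = \sum_{\bm \in \bZ^k} \hat g_n(\bm) e^{-2\pi i \bm \cdot \by} \hat\mu(-n\bm).$$
The $\bm = \bzero$ term is $\hat g_n(\bzero) = \lam_k(A_n) \asymp \psi(n) (\log 1/\psi(n))^{k-1}$ by Gallagher's computation, and non-zero $\bm$ only contributes non-negligibly when $n(m_j - m_1 \alp_j) = O(1)$ for every $j \ge 2$; the pairwise hypothesis on $\ome^*(\alp_i, \alp_j)$ comfortably restricts the count of such $\bm$, yielding $\mu(A_n) = (1 + o(1)) \lam_k(A_n)$ on average and thus $\sum_n \mu(A_n) = \infty$.

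The same machinery applied to $\ind_{A_m} \ind_{A_n}$ gives
$$\mu(A_m \cap A_n) = \sum_{\bm_1, \bm_2 \in \bZ^k} \hat g_m(\bm_1) \hat g_n(\bm_2) e^{-2\pi i (\bm_1 + \bm_2) \cdot \by} \hat\mu(-m\bm_1 - n\bm_2).$$
The diagonal $\bm_1 = \bm_2 = \bzero$ reproduces $\lam_k(A_m) \lam_k(A_n)$, and summation over $m, n \le N$ recovers $\big(\sum_{n\le N} \lam_k(A_n)\big)^2$, matching the target. Every off-diagonal contribution demands $(m m_{1,j} + n m_{2,j}) - (m m_{1,1} + n m_{2,1}) \alp_j$ to be small for every $j \ge 2$, a genuinely \emph{bilinear} diophantine condition on $(m, n, \bm_1, \bm_2)$. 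The natural strategy is to project onto each pair of coordinates $(i, j)$ and invoke the dual exponent $\ome^*(\alp_i, \alp_j)$ to count admissible configurations; the condition $k > 4 + 2 \max_{i < j} \ome^*(\alp_i, \alp_j)$ is exactly what makes the loss absorbable by the $(\log N)^{k-1}$ gain in the main term.

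The hard part is therefore the off-diagonal bilinear estimate. In contrast to the first moment, where each coordinate decouples, the resonance condition on $\hat\mu(-m\bm_1 - n\bm_2)$ couples the scales $m, n$ with the integer vectors $\bm_1, \bm_2$, so one must bound a weighted count of lattice configurations uniformly in $m, n \le N$. Organising the bookkeeping so that the \emph{pair} dual-exponents enter cleanly, and so that the savings in $k$ dimensions actually manifest as $(\log N)^{k-1}$ power gains, will consume the bulk of the work; this is where the dimensional threshold $k \ge 9$ is forced.
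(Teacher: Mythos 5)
Your overall architecture --- expanding the approximating sets in Fourier series against $\hat\mu$, exploiting the concentration of $\hat\mu$ on a tube in the direction $\balp$, and invoking the pairwise dual exponents $\ome^*(\alp_i,\alp_j)$ to count resonant frequencies --- is indeed the philosophy of the paper. But two essential ideas are missing. First, you work with the Fourier coefficients of the indicator of the hyperbolic region $A_n^\times$ itself. This is not viable: $A_n^\times$ has spikes reaching all the way to the hyperplanes $\|nx_j-y_j\|=0$, so its Fourier coefficients are not essentially supported on any bounded dual region, the sums over $\bm$ do not converge absolutely, and your claim that only frequencies with $n(m_j-m_1\alp_j)=O(1)$ contribute cannot be enforced. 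The paper's key device is to replace $A_n^\times$ from below by a union of $\asymp(\log n)^{k-1}$ smoothed dyadic boxes whose side lengths are all forced into the balanced range $n^{-1-1/k-\tau}\ll d_j\ll n^{-1-1/k+\tau}$ (\S\ref{sec: rectangle decomposition} and Example \ref{MultOK}); each box has rapid Fourier decay outside its dual box, and the scale constraint \eqref{eqn: tau} is what every subsequent lattice-point count relies on. Without a decomposition of this kind, both your moment computations collapse at the outset.

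Second, in the second moment you treat every off-diagonal term as subject to a bilinear diophantine condition, but the resonant terms with $m\bm_1+n\bm_2=\bzero$ and $\bm_1,\bm_2\ne\bzero$ have $\hat\mu(\bzero)=1$ and admit no diophantine saving at all. These terms constitute the Lebesgue variance $V_N(\lam)$; they are not small individually and only become negligible after summation over $m,n\le N$ via a $\gcd(m,n)$ analysis. The paper isolates them by proving a variance transference principle $V_N(\mu_0)=V_N(\lam)+o(E_N^2)$ (Theorem \ref{FlatVTP}) and then handles the Lebesgue second moment by a completely different, geometric Gallagher-type overlap argument (Theorems \ref{thm: quasi-independence part one}--\ref{thm: functional quasi-independence part two}), which requires the star-shaped Property $\cP_n(\by)$ of the decomposition. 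Your proposal has no mechanism for these resonances, and the non-resonant count that you defer as ``the hard part'' --- executed in the paper via a parallel/non-parallel case split and singular-value estimates for a $2\times2$ minor built from $\bt,\bt'$ and $\alp_2,\alp_3$ --- is left entirely unexecuted, so the role of the threshold $k>4+2\ome^*$ is never actually substantiated. (A minor further point: the divergence Borel--Cantelli argument only yields measure $\ge 1/C$; the paper upgrades to full measure by driving $C\to1$ through the choice of bump functions, Lemma \ref{lem: divergence constant}, rather than by a density argument, though your localisation would also work.)
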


\begin{remark}
For Lebesgue almost all choices of $\alpha_2,\dots,\alpha_k$, we have 
\[
\omega^*(\alpha_i,\alpha_j) = 2 \qquad (i \ne j).
\]
Moreover, by the Schmidt subspace theorem \cite[Chapter VI, Corollary 1E]{Sch1980}, if $\alpha_2,\dots,\alpha_k$ are algebraic and pairwise $\mathbb{Q}$-linearly independent with 1, then $\omega^*(\alpha_i,\alpha_j)=2$ for all pairs.
In these cases, we require that $k \ge 9$. 
\end{remark}

\subsection{The method}
\label{sec: overview of method}

Let $\mu$ be a probability measure on $\bR^k$. Then
\[
W_k^\times :=
W_k^\times(\psi, \by) 
= \limsup_{n \to \infty} A_n^\times,
\]
where
\begin{equation}
\label{An}
A_n^\times = 
\{ \bx \in [0,1)^k:
\| n x_1 - y_1 \| \cdots \| n x_k - y_k \|
< \psi(n) \}.
\end{equation}
By the first Borel--Cantelli lemma, if
\[
\sum_{n=1}^\infty \mu(A_n^\times) < \infty
\]
then $\mu(W_k^\times) = 0$. By the divergence Borel--Cantelli lemma \cite{BV2023}, if
\[
\sum_{n=1}^\infty \mu(A_n^\times) = \infty
\]
and
\[
\sum_{m,n \le N} \mu(A_m^\times \cap A_n^\times)
= \left(\sum_{n \le N} 
\mu(A_n) \right)^2
+ o\left(
\left( \sum_{n \le N} \mu(A_n^\times) \right)^2
\right)
\]
then $\mu(W_k^\times) = 1$.
Our task, therefore, is to estimate the first and second moments of $\sum_{n \le N} 1_{A_n^\times}$ with respect to $\mu$. We achieve this by comparison to Lebesgue measure. In the sequel, we write $\lam$ for Lebesgue measure on $[0,1]^k$.

For Borel sets $\cA_1, \cA_2, \ldots \subseteq \bR^k$, define
\[
E_N(\mu) = \sum_{n \le N} \mu(\cA_n)
\]
and
\[
V_N(\mu) = \int \left(
\sum_{n \le N}
(1_{\cA_n}(\bx) - \lam(\cA_n))
\right)^2 \d \mu(\bx).
\]
The \emph{expectation transference principle} (ETP) holds if
\[
E_N(\mu) = (1+o(1)) E_N(\lam) + O(1)
\qquad (N \to \infty).
\]
The \emph{variance transference principle} (VTP) holds if
\[
V_N(\mu) = V_N(\lam)
+ o(E_N(\lam)^2) + O(1),
\]
along a sequence of $N \to \infty$. The broad strategy is roughly as follows.
\begin{enumerate}
\item \label{StepETPVTP}
Establish ETP and VTP for $(A_n^\times)_{n=1}^\infty$.
\item \label{StepSecondClose}
Use ETP and VTP to prove that
\begin{equation}
\label{SecondMomentClose}
\sum_{m,n \le N} \mu(A_m^\times \cap A_n^\times)
= \sum_{m,n \le N} 
\lam(A_m^\times \cap A_n^\times)
+ o(E_N(\mu)^2) + O(1),
\end{equation}
along a sequence of $N \to \infty$.
\item \label{StepLebesgueFirst}
Note that
\[
\sum_{n=1}^\infty \lam(A_n^\times)
\]
converges/diverges. 
\item \label{StepLebesgueSecond}
Show that
\begin{equation}
\label{LebesgueSecond}
\sum_{m,n \le N} 
\lam(A_m^\times \cap A_n^\times)
= (1+o(1)) E_N(\lam)^2 + O(1) \qquad (N \to \infty).
\end{equation}
\end{enumerate}
Step \eqref{StepSecondClose}, and the sufficiency of these steps for proving our theorems, will be implemented in the next section.

The sets $A_n^\times$ are unions of hyperbolic regions, which we decompose into hyper-rectangles (boxes). We can then write down their Fourier coefficients. To compute the first and second moments $E_N(\mu)$ and $V_N(\mu)$, we need to weight these by the Fourier coefficients of $\mu$. The zero frequency contributes $E_N(\lam)$ and $V_N(\lam)$, respectively, so we need to bound the contributions of the other frequencies. In the curved setting, the key to our success is the strong pointwise Fourier decay of the measure $\mu$. In the flat setting, the key to our success is the geometric constraint imposed by the uncertainty principle. In both settings, divisibility and geometry in discrete Fourier space are essential.

We establish \eqref{LebesgueSecond} by adapting Gallagher's method \cite{Gal1962}. Applying our framework to $\mu = \lam$ gives an alternate proof of the fully-inhomogeneous Gallagher theorem, which was posed explicitly in two dimensions by Beresnevich, Haynes and Velani \cite[Conjecture 2.1]{BHV2020}. This was solved by Chow and Technau, and generalised to higher dimensions in \cite[Corollary 1.11]{CT2024}. When $k \ge 3$, we are able to somewhat relax the monotonicity assumption.

\begin{thm} 
\label{FIG}
Let $\by \in \bR^k$, and let $\psi: \bN \to [0,1)$. 
\begin{enumerate}[(a)]
\item Assuming \eqref{SeriesConverges}, we have $\lam_k(W^\times(\psi;\by))  = 0$.
\item Assume that $k \ge 3$ and
\begin{equation}
\label{SomewhatSmall}
\psi(n) \ll n^{-\eps},
\end{equation}
for some constant $\eps \in (0,1]$. Then, assuming \eqref{SeriesDiverges}, we have 
\[
\lam_k(W^\times(\psi;\by)) = 1.
\]
\end{enumerate}
\end{thm}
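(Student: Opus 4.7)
The plan is to apply the framework of Section \ref{sec: overview of method} with $\mu = \lam$. In this special case the ETP and VTP hold as exact identities, so Steps \eqref{StepETPVTP}--\eqref{StepSecondClose} of the general strategy become vacuous, and the proof reduces to establishing the first and second moment estimates for Lebesgue measure, i.e.\ Steps \eqref{StepLebesgueFirst}--\eqref{StepLebesgueSecond}. By the change of variables $u_i = nx_i - y_i \bmod 1$, which is $n$-to-$1$ and measure preserving on $[0,1)$, we have
\[
\lam(A_n^\times) = \lam\bigl\{\bu \in [0,1)^k : \|u_1\| \cdots \|u_k\| < \psi(n)\bigr\}
\asymp \psi(n) \bigl(\log(1/\psi(n))\bigr)^{k-1}
\]
when $\psi(n)$ is small; in particular, this quantity is independent of $\by$.

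For part (a) I would apply the first Borel--Cantelli lemma. Split the sum according to whether $\psi(n) \ge n^{-2}$ or $\psi(n) < n^{-2}$. In the first case one has $\log(1/\psi(n)) \le 2\log n$, and the contribution is bounded by a constant multiple of $\sum_n \psi(n)(\log n)^{k-1} < \infty$. In the second case, using that $x \mapsto x(\log(1/x))^{k-1}$ is increasing on $(0, e^{-(k-1)})$, the contribution is dominated by $\sum_n n^{-2}(\log n)^{k-1} < \infty$. Hence $\sum_n \lam(A_n^\times) < \infty$, and (a) follows.

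For part (b) I would apply the divergence Borel--Cantelli lemma. The hypothesis $\psi(n) \ll n^{-\eps}$ ensures $\log(1/\psi(n)) \gg \log n$, so \eqref{SeriesDiverges} yields $\sum_n \lam(A_n^\times) = \infty$. The crux is the quasi-independence statement \eqref{LebesgueSecond}. Following Gallagher \cite{Gal1962}, I would decompose each $A_n^\times$ into translates, by the rationals $(\br + \by)/n$ with $\br \in \{0, \ldots, n-1\}^k$, of a hyperbolic region near the origin, and further dyadically decompose the latter into hyper-rectangles of combined volume comparable to $\psi(n) (\log(1/\psi(n)))^{k-1}$. For $m \neq n$, computing $\lam(A_m^\times \cap A_n^\times)$ amounts to counting overlaps between such translated boxes, which is tractable via the arithmetic of $\mathrm{lcm}(m,n)$; the target estimate takes the shape
\[
\lam(A_m^\times \cap A_n^\times) \le \bigl(1 + O\bigl((\gcd(m,n)/\min(m,n))^\eta\bigr)\bigr)\lam(A_m^\times) \lam(A_n^\times) + \text{error},
\]
for some $\eta > 0$, and standard GCD-sum bounds then upgrade this to \eqref{LebesgueSecond} along a suitable sequence $N \to \infty$.

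The main obstacle is precisely this second-moment estimate in the absence of monotonicity. Gallagher's original argument used monotonicity to control the way boxes of different scales can coincide across different values of $m$ and $n$. To circumvent this, the assumption $\psi(n) \ll n^{-\eps}$ lets one partition the range of $n$ into dyadic blocks on which $\psi$ is essentially comparable to its maximum, while the assumption $k \ge 3$ supplies the additional $(\log n)$-factor of slack needed to absorb the loss incurred in passing to such a majorant; for $k = 2$ this slack is unavailable, which is presumably why we are forced to impose the additional hypothesis there.
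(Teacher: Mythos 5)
Part (a) is fine and matches the computation in Lemma \ref{LebesgueFirstLemma} (the paper itself simply cites \cite{CT2024} for this half). For part (b) your skeleton is the paper's: decompose $A_n^\times$ into lattice translates of boxes \`a la Gallagher and apply the divergence Borel--Cantelli lemma. But the one step you defer --- the quasi-independence estimate --- is the entire technical content of the theorem, and your description of it is both vague and, where specific, wrong in shape. The paper (Theorems \ref{thm: quasi-independence part one} and \ref{thm: quasi-independence part two}) proves $\lam(A_n\cap(A_{n'}+\bgam))\le \lam(A_n)\lam(A_{n'})+(\text{additive error})$ with multiplicative constant exactly $1$ (or $C$ arbitrarily close to $1$ after smoothing), and the mechanism producing this constant is structural: the decomposition must be arranged so that $A_n$ has Property $\cP_n(\by)$ (lattice translates of a strongly star-shaped set), so that Gallagher's monotonicity lemma allows the sum over translates to be dominated by an integral that factorises as $\lam(A_n)\lam(A_{n'})$. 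Your proposed bound, with a factor $1+O\bigl((\gcd(m,n)/\min(m,n))^\eta\bigr)$ multiplying the main term, is not what the box-counting yields and is not obviously sufficient for full measure; you give no route to a main term with constant (essentially) $1$.

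Two further concrete problems. The role of $k\ge 3$ is not a ``$(\log n)$-factor of slack'': the additive error carries $\gcd(n,n')^k/n^k$, and summing over $n'\le n$ produces the divisor sum $\sum_{s\mid n}s^{1-k}$, which is bounded precisely when $k\ge 3$; for $k=2$ one gets $\sigma(n)/n$, and only monotonicity of $\psi$ lets one average that away by partial summation --- this is the actual reason for the hypothesis. And the inhomogeneous shift is not disposed of by your change of variables: that normalises a single $A_n^\times$, but in $\lam(A_m^\times\cap A_n^\times)$ the two lattices $m^{-1}(\by+\bZ^k)$ and $n^{-1}(\by+\bZ^k)$ are mutually shifted, so the quasi-independence bound must be proved uniformly in an arbitrary shift $\bgam$ (the ``good/bad cube'' half of the proof of Theorem \ref{thm: quasi-independence part one}). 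Finally, to handle non-monotone $\psi$ one must fix the box side-lengths on each dyadic block (this is where $\psi(n)\ll n^{-\eps}$ enters, via \eqref{eqn: tau} and Remark \ref{refinement}) and compare the resulting first moment with \eqref{SeriesDiverges} by Cauchy condensation; your sketch gestures at this but carries none of it out.
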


\begin{remark}
\label{refinement}
The assumption \eqref{SomewhatSmall} is weaker than the assumption of monotonicity, in the divergence setting, irrespective of the value of the constant $\eps \in (0,1]$. Indeed, suppose $\psi(n_j) > n_j^{-\eps}$ for some positive integers $n_1, n_2, \ldots$ with
\[
n_{j+1} > 2n_j \qquad (j \in \bN).
\]
Then
\[
\psi(n) > \frac1{2n} \qquad (n_j/2 < n \le n_j),
\]
and we can replace $\psi$ by the function
\[
\psi_1(n) = \min \{ \psi(n), 1/(2n) \}.
\]

An example of non-monotonic function to which the result applies is
\[
\psi(n) = \begin{cases}
\frac1{n^{1/2} (\log n)^k}, &\text{if } n \text{ is a square} \\
0,&\text{otherwise}.
\end{cases}
\]
\end{remark}

\subsection{Open problems}

\subsubsection{Precise diophantine criteria}

In the flat settings, our diophantine criteria are sufficient but not necessary. We challenge the reader to refine these.

\subsubsection{Multiplicative divergence theory for planar curves}

The curved divergence theory remains at large here. 

\subsubsection{Beyond codimension one (hypersurfaces)}

We expect that the methods of this paper, when combined with those of \cite{BdS2023}, will deliver progress in this direction. We hope to address this in future work.

\subsubsection{Multiplicative approximation on fractals}

We expect that the methods of this paper, when combined with those of \cite{CVY, DJ, Yu}, will deliver progress in this direction. We hope to address this in future work.

\subsubsection{More general measures}

We speculate that Conjecture \ref{MultDream} might extend to friendly measures --- which generalise natural measures on manifolds and fractals --- cf. \cite[\S 10]{KLW2004}.

\subsection*{Organisation}

In \S \ref{GeneralFramework}, we reduce the problems to smooth analogues of Steps \eqref{StepETPVTP}, \eqref{StepLebesgueFirst} and \eqref{StepLebesgueSecond} of the broad strategy described above. In \S \ref{sec: rectangle decomposition}, we decompose our hyperbolic regions into rectangular ones, and introduce some smoothing. In \S \ref{sec: ETP}, we establish ETPs in the curved and flat settings. Then, in \S \ref{sec: VTP}, we establish the corresponding VTPs. We estimate the Lebesgue second moment in \S \ref{sec: Lebesgue}. Finally, in \S \ref{finale}, we complete the proofs of our main theorems.

\subsection*{Notation}

For complex-valued functions $f$ and $g$, we write $f \ll g$ or $f = O(g)$ if there exists $C$ such that $|f| \le C|g|$ pointwise, we write $f \sim g$ if $f/g \to 1$ in some specified limit, and we write $f = o(g)$ if $f/g \to 0$ in some specified limit. We will work in $k$-dimensional Euclidean space, and the implied constants will always be allowed to depend on $k$, as well as on any parameters which we described as being fixed. Any further dependence will be indicated with a subscript.

For $x \in \bR$, we write $e(x) = e^{2 \pi i x}$. For $r > 0$, we write $B_r$ for the closed ball of radius $r$ centred at the origin.

\subsection*{Funding}

HY was supported by the Leverhulme Trust (ECF-2023-186).

\subsection*{Rights}

For the purpose of open access, the authors have applied a Creative Commons Attribution (CC-BY) licence to any Author Accepted Manuscript version arising from this submission.

\section{A general framework}
\label{GeneralFramework}

The purpose of this section is to reduce our theorems to functional analogues of Steps \eqref{StepETPVTP}, \eqref{StepLebesgueFirst} and \eqref{StepLebesgueSecond} from \S \ref{sec: overview of method}.

\subsection{A setwise framework}

We begin with the setwise version of the framework. In this context, we will also be able to implement Step \eqref{StepLebesgueFirst} generally.

\begin{lem} 
[Step \eqref{StepSecondClose}]
\label{SecondMomentCloseLemma}
Let $\cA_1, \cA_2, \ldots$ be Borel subsets of $\bR^k$ for which ETP and VTP hold. Then 
\[
\sum_{m,n \le N}
\mu(\cA_m \cap \cA_n)
= \sum_{m,n \le N} \lam(\cA_m \cap \cA_n) + o(E_N(\mu)^2) + O(1),
\]
along a sequence of $N \to \infty$.
\end{lem}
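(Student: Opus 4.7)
The plan is to expand the desired second moment as an $L^2$ integral and then extract the variance. Set $S_N(\bx) = \sum_{n \le N} 1_{\cA_n}(\bx)$ and $s_N = E_N(\lam) = \sum_{n \le N} \lam(\cA_n)$. Then by Fubini
\[
\sum_{m,n \le N} \mu(\cA_m \cap \cA_n) = \int S_N(\bx)^2 \, \d \mu(\bx),
\]
and similarly with $\lam$ in place of $\mu$. Expanding $(S_N - s_N)^2 = S_N^2 - 2 s_N S_N + s_N^2$ and integrating gives the key identity
\[
\int S_N^2 \, \d \mu = V_N(\mu) + 2 s_N E_N(\mu) - s_N^2,
\]
and its Lebesgue counterpart $\int S_N^2 \, \d \lam = V_N(\lam) + s_N^2$.

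Subtracting these two identities, I obtain
\[
\int S_N^2 \, \d \mu - \int S_N^2 \, \d \lam = \bigl(V_N(\mu) - V_N(\lam)\bigr) + 2 s_N \bigl(E_N(\mu) - s_N\bigr).
\]
Now I apply the hypotheses. Along the sequence on which VTP holds, the first bracket is $o(s_N^2) + O(1)$. By ETP, $E_N(\mu) - s_N = o(s_N) + O(1)$, so the second term is $o(s_N^2) + O(s_N)$. Thus
\[
\int S_N^2 \, \d \mu - \int S_N^2 \, \d \lam = o(s_N^2) + O(s_N) + O(1),
\]
again along the VTP sequence.

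To convert the error term into the form $o(E_N(\mu)^2) + O(1)$, I split into two cases. If $s_N$ is bounded along the chosen sequence, then ETP forces $E_N(\mu)$ to be bounded as well, and both $V_N(\mu)$ and $V_N(\lam)$ are $O(1)$; hence the whole difference collapses to $O(1)$, as required. Otherwise, passing to a further subsequence if necessary, $s_N \to \infty$, which by ETP gives $E_N(\mu) \sim s_N$ and therefore $E_N(\mu)^2 \sim s_N^2$. In this regime $O(s_N) = o(s_N^2) = o(E_N(\mu)^2)$, and the constant term is absorbed, yielding precisely $o(E_N(\mu)^2) + O(1)$.

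There is no real obstacle here beyond bookkeeping; the only subtlety is the combined use of ETP (valid for all $N$) and VTP (valid only along a sequence), which forces the conclusion to be asserted along a subsequence, and the mild case split needed because the error $O(s_N)$ from ETP must be shown to be swallowed by $o(E_N(\mu)^2)$ unless everything is already bounded.
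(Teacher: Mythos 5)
Your proof is correct and follows essentially the same route as the paper: both expand $\sum_{m,n\le N}\mu(\cA_m\cap\cA_n)=V_N(\mu)+2E_N(\lam)E_N(\mu)-E_N(\lam)^2$ (and its Lebesgue analogue) and then substitute the ETP and VTP. Your explicit case split on whether $E_N(\lam)$ is bounded just makes precise the error-term bookkeeping that the paper's three-line computation leaves implicit.
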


\begin{proof} Using ETP and VTP, we compute that
\begin{align*}
\sum_{m,n \le N} \mu(\cA_m \cap \cA_n)
&= V_N(\mu) - E_N(\lam)^2
+2 E_N(\mu) E_N(\lam) \\
&= V_N(\lam) + E_N(\lam)^2 
+ o(E_N(\mu)^2) + O(1) \\
&= \sum_{m,n \le N} 
\lam(\cA_m \cap \cA_n)
+ o(E_N(\mu)^2) + O(1),
\end{align*}
along a sequence of $N \to \infty$.
\end{proof}

\begin{lem} [Step \eqref{StepLebesgueFirst}]
\label{LebesgueFirstLemma}
Let $\psi: \bN \to [0,1)$ with \eqref{SeriesConverges}. Then
\begin{equation}
\label{LebesgueFirstConv}
\sum_{n=1}^\infty \lam(A_n^\times) < \infty.
\end{equation}
If we instead have \eqref{SeriesDiverges}, then
\begin{equation}
\label{LebesgueFirstDiv}
\sum_{n=1}^\infty \lam(A_n^\times) =
\infty.
\end{equation}
\end{lem}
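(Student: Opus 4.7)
The plan is to reduce $\lam(A_n^\times)$ to a simple volume that depends on $n$ only through $\psi(n)$, and then to compare this volume with the Gallagher-type series in \eqref{SeriesConverges}/\eqref{SeriesDiverges}. The first step is a periodicity argument: since $n \in \bN$, the map $\bx \mapsto (n\bx - \by) \mmod \bZ^k$ is an $n^k$-to-one local isometry of $[0,1)^k$ and therefore preserves Lebesgue measure. From \eqref{An} we obtain
\[
\lam(A_n^\times) \;=\; V_k(\psi(n)), \qquad V_k(\eps) \;:=\; \lam_k\bigl(\{\bu \in [0,1)^k : \|u_1\| \cdots \|u_k\| < \eps\}\bigr),
\]
so $\lam(A_n^\times)$ is independent of $\by$ and depends on $n$ only through $\psi(n)$.

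Next I would establish the classical estimate
\[
V_k(\eps) \;\asymp\; \min\bigl\{1,\; \eps(1 + |\log \eps|)^{k-1}\bigr\} \qquad (\eps \in (0,1]).
\]
Since $\|u\|$ is symmetric about $1/2$ in each coordinate, rescaling by a factor of $2$ in each variable reduces this to computing the volume $P_k(\eta) := \lam_k\{\bv \in [0,1]^k : v_1 \cdots v_k < \eta\}$. An induction on $k$, using the recurrence $P_k(\eta) = \eta + \int_\eta^1 P_{k-1}(\eta/v) \, \d v$ obtained by integrating out the last coordinate, yields the closed form $P_k(\eta) = \eta \sum_{j=0}^{k-1} (-\log \eta)^j/j!$ for $\eta \le 1$ (and $P_k(\eta) = 1$ for $\eta \ge 1$), from which the asymptotic is immediate.

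Finally, I would compare the two series by partitioning $\bN$ according to the size of $\psi(n)$. Let
\[
\cC = \{n : \psi(n) \ge 2^{-k}\}, \quad \cB = \{n \ge 2 : n^{-2} \le \psi(n) < 2^{-k}\}, \quad \cA = \{n \ge 2 : \psi(n) < n^{-2}\}.
\]
On $\cC$ we have $V_k(\psi(n)) = 1$ while $\psi(n)(\log n)^{k-1} \ge 2^{-k}(\log n)^{k-1}$, so the two series are simultaneously finite (if $\cC$ is finite) or simultaneously divergent. On $\cB$ we have $|\log \psi(n)| \le 2 \log n$, whence $V_k(\psi(n)) \asymp \psi(n)(\log n)^{k-1}$, so the two sums agree on $\cB$ up to an absolute constant. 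On $\cA$ both contributions are absolutely summable: since $x(1+|\log x|)^{k-1} \ll x^{3/4}$ for $x$ small, $V_k(\psi(n)) \ll \psi(n)^{3/4} \ll n^{-3/2}$, and $\psi(n)(\log n)^{k-1} \ll n^{-2}(\log n)^{k-1}$. Combining these three cases yields \eqref{LebesgueFirstConv} under \eqref{SeriesConverges} and \eqref{LebesgueFirstDiv} under \eqref{SeriesDiverges}.

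I do not anticipate a substantive obstacle: the argument is classical and entirely elementary. The one mild nuisance is the absence of any monotonicity assumption on $\psi$, which is why a pointwise three-way split of $\bN$ is used rather than a sum-level comparison; this ensures that the estimate is insensitive to the distribution of the atypical $n$ within $\bN$.
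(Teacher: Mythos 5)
Your convergence half is correct and is essentially the paper's argument: reduce $\lam(A_n^\times)$ to the volume $V_k(\psi(n))$, use the closed form $\eta\sum_{j<k}(-\log\eta)^j/j!$ (which the paper cites from \cite{CT2023} rather than rederiving), and split $\bN$ according to whether $\psi(n)$ is tiny, moderate, or bounded below; your cut at $\psi(n)<n^{-2}$ versus the paper's $n^{-2k}$ is immaterial.

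The divergence half has a genuine gap. On $\cB$ you claim $V_k(\psi(n)) \asymp \psi(n)(\log n)^{k-1}$, but the inequality $|\log\psi(n)| \le 2\log n$ only yields the upper bound $V_k(\psi(n)) \ll \psi(n)(\log n)^{k-1}$; for divergence you need the \emph{lower} bound, which would require $|\log\psi(n)| \gg \log n$, i.e.\ $\psi(n) \le n^{-c}$, and membership in $\cB$ gives no such thing. This is not a repairable oversight within your framework: without a monotonicity-type hypothesis the implication \eqref{SeriesDiverges} $\Rightarrow$ \eqref{LebesgueFirstDiv} is false. For instance, with $k=2$, take $\psi(n)=1/\log n$ for $n=2^{2^j}$ and $\psi(n)=0$ otherwise; then $\sum_n \psi(n)\log n = \sum_j 1 = \infty$, yet
\[
\sum_n V_2(\psi(n)) \asymp \sum_j \frac{\log\log 2^{2^j}}{\log 2^{2^j}} \asymp \sum_j \frac{j}{2^j} < \infty .
\]
The paper sidesteps the issue entirely by a different route: under \eqref{SeriesDiverges} it invokes the fully inhomogeneous Gallagher theorem of \cite{CT2024} to get $\lam(W_k^\times)=1$, and then deduces \eqref{LebesgueFirstDiv} from the \emph{contrapositive} of the first Borel--Cantelli lemma. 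That citation silently imports the monotonicity of $\psi$, which is available in every application of the lemma (and, strictly speaking, ought to appear in the hypothesis of the divergence part). To salvage a direct argument in your style you would first need the reduction of Remark \ref{refinement}/Example \ref{MultOK}, replacing $\psi(n)$ by $\min\{\psi(n),1/(2n)\}$ so that $\log(1/\psi(n)) \asymp \log n$ on the support, after which your two-sided comparison on $\cB$ becomes valid.
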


\begin{proof} By the Haar property of Lebesgue measure, we may assume that $\by = \bzero$. We write
\[
\sum_{n=1}^\infty \lam(A_n^\times) = 
S_1 + S_2,
\]
where
\[
S_1 = \sum_{\psi(n) < n^{-2k}}
\lam(A_n^\times),
\qquad
S_2 = \sum_{\psi(n) \ge n^{-2k}}
\lam(A_n^\times).
\]
Then
\[
S_1 \ll \sum_n n^{-2} < \infty.
\]

Observe by a change of variables that if $n \in \bN$ then
\[
\lam(A_n^\times) = 2^k 
\lam(\cC_k(\psi(n))) = \lam(\cB_k(2^k \psi(n))),
\]
where
\[
\cC_k(\rho) = \{ \bx \in [0,1/2]^k:
0 \le x_1 \cdots x_k \le \rho \}
\]
and
\[
\cB_k(\rho) = \{ \bx \in [0,1]^k:
0 \le x_1 \cdots x_k \le \rho \}
\]
Now \cite[Lemma A.1]{CT2023} gives
\[
\lam(A_n^\times) = \begin{cases}
1, &\text{if } 
\psi(n) \ge 2^{-k}\\
2^k \psi(n) \displaystyle
\sum_{s=0}^{k-1}
\frac{(-\log (2^k\psi(n)))^s}{s!},
&\text{if } 
\psi(n) < 2^{-k}.
\end{cases}
\]

First suppose that
we have \eqref{SeriesConverges}. Then, for large values of $n$, we have 
$\psi(n) < 2^{-k}$. Hence
\begin{align*}
S_2 &\ll 1 + \sum_{n^{-2k} \le  \psi(n) < 2^{-k}}
\psi(n) \sum_{s=0}^{k-1} 
\frac{(-\log(2^k\psi(n)))^s} {s!}
\\
&\ll 1 + \sum_n \psi(n) 
(\log n)^{k-1} < \infty.
\end{align*}

Now suppose instead that
we have \eqref{SeriesDiverges}. By the fully-inhomogeneous version of Gallagher's theorem
\cite{CT2024}, we have
$\lam(W_k^\times) = 1$. Consequently, by the first Borel--Cantelli lemma, we must have \eqref{LebesgueFirstDiv}.
\end{proof}

\begin{lem} 
[Convergence theory conclusion]
Let $\psi: \bN \to [0,1)$ with
\eqref{SeriesConverges}, and suppose we have ETP for the sets $A_1^\times, A_2^\times, \ldots$ given by \eqref{An}. Then $\mu(W_k^\times) = 0$.
\end{lem}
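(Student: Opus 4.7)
The plan is a short Borel--Cantelli assembly with no genuine obstacles; the real work has already been offloaded into Lemma \ref{LebesgueFirstLemma} and the ETP hypothesis.

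First, I would invoke Lemma \ref{LebesgueFirstLemma}, using the convergence assumption \eqref{SeriesConverges}, to obtain
\[
\sum_{n=1}^\infty \lam(A_n^\times) < \infty,
\]
so the Lebesgue first moments $E_N(\lam)$ are uniformly bounded in $N$. Next, I would apply ETP: since ETP asserts $E_N(\mu) = (1+o(1)) E_N(\lam) + O(1)$ as $N \to \infty$, and the factor $E_N(\lam)$ stays bounded, the product $(1+o(1))E_N(\lam)$ is $O(1)$, hence $E_N(\mu) = O(1)$ as well.

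The partial sums $E_N(\mu) = \sum_{n \le N} \mu(A_n^\times)$ are non-decreasing in $N$ (the measure is non-negative) and now bounded, so they converge. Therefore
\[
\sum_{n=1}^\infty \mu(A_n^\times) < \infty.
\]
Finally, since $W_k^\times = \limsup_{n \to \infty} A_n^\times$, the first Borel--Cantelli lemma yields $\mu(W_k^\times) = 0$.

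The only point requiring any care is the interpretation of the $o(1)$ factor in ETP when the benchmark $E_N(\lam)$ is itself bounded; but $(1+o(1))\cdot E_N(\lam)$ is manifestly $O(1)$ in that case, so no additional input is needed. In particular, no second-moment information (VTP) or geometric structure of the hyperbolic regions is required for this lemma --- those will only come into play for the divergence half of the framework.
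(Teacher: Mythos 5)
Your proof is correct and follows exactly the paper's argument: Lemma \ref{LebesgueFirstLemma} gives \eqref{LebesgueFirstConv}, the ETP then yields $\sum_n \mu(A_n^\times) < \infty$, and the first Borel--Cantelli lemma concludes. Your extra care in unpacking the $(1+o(1))E_N(\lam) + O(1)$ form of ETP when $E_N(\lam)$ is bounded is a sensible elaboration of the same step.
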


\begin{proof} Lemma \ref{LebesgueFirstLemma} gives \eqref{LebesgueFirstConv}. By ETP, we thus have
\[
\sum_{n=1}^\infty \mu(A_n^\times) < \infty,
\]
and the first Borel--Cantelli lemma completes the proof.
\end{proof}

\begin{lem} 
[Divergence theory conclusion]
Let $\psi: \bN \to [0,1)$ with
\eqref{SeriesDiverges}, suppose we have ETP and VTP for the sets $A_1^\times, A_2^\times, \ldots$ given by \eqref{An}, and that we have \eqref{LebesgueSecond}. Then $\mu(W_k^\times) = 1$.
\end{lem}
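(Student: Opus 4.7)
The plan is a straightforward assembly of the ingredients already prepared: Lemma~\ref{LebesgueFirstLemma}, Lemma~\ref{SecondMomentCloseLemma}, the hypothesis \eqref{LebesgueSecond}, and the divergence Borel--Cantelli lemma of \cite{BV2023}. The heavy lifting—passing from Lebesgue to $\mu$—has already been done in Lemma~\ref{SecondMomentCloseLemma} via ETP and VTP, so what remains is bookkeeping of the error terms.

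First, I would upgrade the Lebesgue divergent first moment to a divergent first moment for $\mu$. Lemma~\ref{LebesgueFirstLemma}, together with the hypothesis \eqref{SeriesDiverges}, gives $E_N(\lam) \to \infty$. Applying ETP yields
\[
E_N(\mu) = (1+o(1)) E_N(\lam) + O(1),
\]
so in particular $\sum_n \mu(A_n^\times) = \infty$ and $E_N(\mu)/E_N(\lam) \to 1$.

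Second, I would verify the second-moment asymptotic required by divergence Borel--Cantelli, along the sequence $N_j \to \infty$ on which VTP holds. By Lemma~\ref{SecondMomentCloseLemma},
\[
\sum_{m,n \le N_j} \mu(A_m^\times \cap A_n^\times)
= \sum_{m,n \le N_j} \lam(A_m^\times \cap A_n^\times)
+ o(E_{N_j}(\mu)^2) + O(1).
\]
Substituting the hypothesis \eqref{LebesgueSecond} into the right-hand side and using $E_{N_j}(\lam) = (1+o(1)) E_{N_j}(\mu)$ together with $E_{N_j}(\mu) \to \infty$, the constant term is absorbed and I obtain
\[
\sum_{m,n \le N_j} \mu(A_m^\times \cap A_n^\times)
= (1+o(1)) E_{N_j}(\mu)^2,
\]
which is exactly the quasi-independence hypothesis required.

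Finally, I would invoke the divergence Borel--Cantelli lemma along the subsequence $(N_j)$: this suffices because membership in $W_k^\times = \limsup_n A_n^\times$ is equivalent to membership in infinitely many $A_n^\times$, and the subsequential version supplies $\mu$-almost sure membership in infinitely many. Hence $\mu(W_k^\times) = 1$. There is no real obstacle here; the only mild care is to track that VTP is only assumed along a sequence, and to note that the $O(1)$ error is harmless because $E_{N_j}(\mu)^2 \to \infty$.
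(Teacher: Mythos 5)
Your proposal is correct and follows essentially the same route as the paper: Lemma~\ref{LebesgueFirstLemma} plus ETP for the divergent first moment, Lemma~\ref{SecondMomentCloseLemma} combined with \eqref{LebesgueSecond} and ETP for the asymptotic $\sum_{m,n \le N} \mu(A_m^\times \cap A_n^\times) \sim E_N(\mu)^2$ along a sequence, and the divergence Borel--Cantelli lemma to conclude. Your extra care about the subsequence on which VTP holds and the absorption of the $O(1)$ terms is exactly the right bookkeeping, which the paper leaves implicit.
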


\begin{proof} Lemma \ref{LebesgueFirstLemma} gives \eqref{LebesgueFirstDiv}. Combining Lemma 
\ref{SecondMomentCloseLemma} with \eqref{LebesgueSecond} and the ETP, we also have 
\[
\sum_{m,n \le N} \mu(A_m^\times \cap A_n^\times)
\sim E_N(\mu)^2,
\]
along a sequence of $N \to \infty$.
The divergence Borel--Cantelli lemma~\cite{BV2023} completes the proof.
\end{proof}

\subsection{A functional analogue}
\label{sec: functional}

Working with smooth weights will enable us to carry out Step 1. This requires us to work with functions instead of sets. The purpose of the present subsection is to formalise this.

For any measure $\mu$ on $\bR^k$ and any $f \in L^1(\mu)$, we write
\[
\mu(f) = \int_{[0,1]^k} f \d \mu.
\]
For $n \in \bN$, let $f_n: \bR^k \to [0,\infty)$ be compactly supported and measurable. Define
\[
E_N(\mu) = \sum_{n \le N} \mu(f_n)
\]
and
\[
V_N(\mu) = \int
\left( \sum_{n \le N} f_n(\bx) - \lam(f_n) \right)^2
\d \mu(\bx).
\]
The \emph{expectation transference principle} (ETP) holds if
\[
E_N(\mu) = (1+o(1)) E_N(\lam) + O(1)
\qquad (N \to \infty).
\]
The \emph{variance transference principle} (VTP) holds if
\[
V_N(\mu) = V_N(\lam) + o(E_N(\mu)^2) + O(1),
\]
along a sequence of $N \to \infty$.

Let $\cA_1, \cA_2, \ldots$ be Borel subsets of $\bR^k$. Our four steps are now as follows.

\begin{enumerate}[(1)]
\item \label{FunctionalStepETPVTP}
Establish ETP and VTP for $(f_n)_{n=1}^\infty$.
\item \label{FunctionalStepSecondClose}
Use ETP and VTP to prove that
\[
\sum_{m,n \le N} \mu(f_m f_n) = \sum_{m,n \le N} \lam(f_m f_n) + o(E_N(\mu)^2) + O(1),
\]
along a sequence of $N \to \infty$.
\item \label{FunctionalStepLebesgueFirst}
Note that
\[
\sum_{n=1}^\infty \lam(f_n)
\]
converges/diverges.
\item 
\label{FunctionalStepLebesgueSecond}
Show that
\begin{equation} \label{FunctionalLebesgueSecond}
\sum_{m, n \le N} 
\lam(f_m f_n) \le C E_N(\lam)^2 + O(E_N(\lam)).
\end{equation}
Here $C > 1$ is a constant that can be taken arbitrarily close to 1 by adjusting the smooth weights.
\end{enumerate}

\begin{lem} 
[Functional Step \eqref{StepSecondClose}]
\label{FunctionalSecondMomentCloseLemma}
Suppose ETP and VTP hold for the sequence $(f_n)_{n=1}^\infty$. Then 
\[
\sum_{m,n \le N}
\mu(f_m f_n)
= \sum_{m,n \le N} 
\lam(f_m f_n) + o(E_N(\mu)^2) + O(1),
\]
along a sequence of $N \to \infty$.
\end{lem}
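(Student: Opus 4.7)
The plan is to mirror, line by line, the proof of Lemma \ref{SecondMomentCloseLemma} in the setwise setting, with indicator functions $1_{\cA_n}$ replaced by the smooth weights $f_n$ and set intersections replaced by pointwise products. Expanding the square in the definition of $V_N(\mu)$ gives the algebraic identity
\[
V_N(\mu) = \sum_{m,n \le N} \mu(f_m f_n) - 2 E_N(\mu) E_N(\lam) + E_N(\lam)^2,
\]
which I would rearrange to express the second moment $\sum_{m,n \le N} \mu(f_m f_n)$ in terms of $V_N(\mu)$, $E_N(\mu)$ and $E_N(\lam)$. Specialising the same identity to $\mu = \lam$ gives in particular $\sum_{m,n \le N} \lam(f_m f_n) = V_N(\lam) + E_N(\lam)^2$.

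Subtracting the two identities, the claim reduces to showing that
\[
\bigl(V_N(\mu) - V_N(\lam)\bigr) + 2 E_N(\lam) \bigl(E_N(\mu) - E_N(\lam)\bigr)
= o(E_N(\mu)^2) + O(1)
\]
along a suitable subsequence of $N \to \infty$. VTP handles the first bracket directly. For the second, ETP gives $E_N(\mu) - E_N(\lam) = o(E_N(\lam)) + O(1)$, so the product is at most $o(E_N(\lam)^2) + O(E_N(\lam))$.

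The last step is simply to absorb these residual errors into the shape advertised in the statement. ETP forces $E_N(\lam) \ll E_N(\mu) + 1$, so $o(E_N(\lam)^2) = o(E_N(\mu)^2) + O(1)$ automatically. For the lone $O(E_N(\lam))$ term, I would dichotomise on whether $E_N(\mu)$ is bounded along the chosen subsequence: if so, then so is $E_N(\lam)$, making it $O(1)$; otherwise $E_N(\mu) \to \infty$ and $O(E_N(\lam)) = O(E_N(\mu)) = o(E_N(\mu)^2)$. There is no genuine obstacle here: the argument is purely algebraic manipulation of the definitions of $E_N$ and $V_N$, and the only mildly fiddly point is the careful bookkeeping of the cross-term $E_N(\mu) E_N(\lam)$ so as to land exactly on the stated error term.
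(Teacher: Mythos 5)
Your proposal is correct and is essentially the paper's argument: the paper proves the setwise version (Lemma \ref{SecondMomentCloseLemma}) by exactly this algebraic expansion of $V_N$, substitution of ETP and VTP, and specialisation to $\mu=\lam$, and then proves the functional version by declaring it \emph{mutatis mutandis}. The only difference is presentational --- you subtract the two identities and spell out the absorption of the residual $o(E_N(\lam)^2)+O(E_N(\lam))$ terms via the dichotomy on whether $E_N(\mu)$ is bounded, a step the paper leaves implicit.
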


\begin{proof} Follow the proof of Lemma \ref{SecondMomentCloseLemma}, \emph{mutatis mutandis}.
\end{proof}

\begin{lem}
[Functional convergence theory conclusion]
\label{FunctionalConvergence}
Let $\psi: \bN \to [0,1)$. Assume that $f_n \gg 1$ on $A_n^\times$ for all sufficiently large $n \in \bN$, that
\begin{equation}
\label{ConvTech}
\sum_{n=1}^\infty \lam(f_n) < \infty,
\end{equation}
and that we have ETP for this sequence of functions. Then $\mu(W_k^\times(\psi)) = 0$.
\end{lem}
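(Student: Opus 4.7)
The plan is to reduce everything to the first Borel--Cantelli lemma, exactly as in the earlier setwise ``convergence theory conclusion'' lemma, using ETP to push Lebesgue summability across to the measure $\mu$, and using the pointwise domination $f_n \gg \ind_{A_n^\times}$ to compare the $\mu$-mass of $A_n^\times$ with $\mu(f_n)$.

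First I would invoke the hypothesis \eqref{ConvTech} to observe that $E_N(\lam) = \sum_{n \le N} \lam(f_n)$ is a bounded sequence in $N$. Applying ETP then gives $E_N(\mu) = (1+o(1)) E_N(\lam) + O(1) = O(1)$, so in particular
\[
\sum_{n=1}^\infty \mu(f_n) < \infty.
\]

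Next I would translate this into a statement about the sets $A_n^\times$. The hypothesis $f_n \gg 1$ on $A_n^\times$ for all sufficiently large $n$ means there exist a constant $c > 0$ and $n_0 \in \bN$ such that $f_n(\bx) \ge c \, \ind_{A_n^\times}(\bx)$ pointwise for all $n \ge n_0$. Integrating against $\mu$ and summing yields
\[
\sum_{n \ge n_0} \mu(A_n^\times) \le c^{-1} \sum_{n \ge n_0} \mu(f_n) < \infty.
\]

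Finally, since $W_k^\times(\psi) = \limsup_{n \to \infty} A_n^\times$, the first Borel--Cantelli lemma immediately gives $\mu(W_k^\times(\psi)) = 0$. There is no substantive obstacle in this lemma itself; all of the analytic work has been packaged into the ETP hypothesis and into the domination $f_n \gg \ind_{A_n^\times}$, the latter being the reason the authors smooth the indicator functions $\ind_{A_n^\times}$ from above rather than from below in this direction.
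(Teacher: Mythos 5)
Your proof is correct and is essentially identical to the paper's: the paper likewise applies ETP to deduce $\sum_n \mu(f_n) < \infty$, bounds $\sum_n \mu(A_n^\times) \ll 1 + \sum_n \mu(f_n)$ using the hypothesis $f_n \gg 1$ on $A_n^\times$ (the $O(1)$ absorbing the finitely many exceptional $n$), and concludes with the first Borel--Cantelli lemma. No gaps.
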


\begin{proof}
By ETP, we have
\[
\sum_{n=1}^\infty \mu(f_n) < \infty.
\]
Now
\[
\sum_{n=1}^\infty \mu(A_n^\times) \ll
1 + \sum_{n=1}^\infty \mu(f_n) 
< \infty,
\]
and the first Borel--Cantelli lemma completes the proof.
\end{proof}

We require a functional analogue of the divergence Borel--Cantelli lemma.

\begin{lem} [Functional divergence Borel--Cantelli]
\label{FunctionalDBC}
Let $\mu$ be a probability measure and let $C \ge 1$. Let $\cE_1, \cE_2, \ldots$ be measurable sets, and put 
\[
\cE_\infty = \displaystyle \limsup_{n \to \infty} \cE_n.
\]
For $n \in \bN$, let $f_n: \bR^k \to [0,\infty)$ be a measurable function supported on $\cE_n$. Assume that $\displaystyle \sum_{n=1}^\infty \mu(f_n) = \infty$, and that
\[
\sum_{m,n \le N} \mu(f_m f_n) \le C E_N(\mu)^2
\]
holds for infinitely many $N \in \bN$. Then $\mu(\cE_\infty) \ge 1/C$.
\end{lem}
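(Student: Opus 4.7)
The plan is to mimic the standard Chung--Erd\H{o}s / Paley--Zygmund proof of the divergence Borel--Cantelli lemma, but at the level of the functions $f_n$ rather than the sets $\cE_n$. The passage from $f_n$ to $\cE_n$ is free of charge because $\{f_n > 0\} \subseteq \cE_n$ by hypothesis.

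First I would fix $M \in \bN$ and, for each $N \ge M$, introduce the tail partial sum
\[
S_N^M(\bx) = \sum_{M \le n \le N} f_n(\bx), \qquad T_N^M = \sum_{M \le n \le N} \mu(f_n).
\]
Then $\mu(S_N^M) = T_N^M$ and
\[
\mu\bigl((S_N^M)^2\bigr) = \sum_{M \le m, n \le N} \mu(f_m f_n) \le \sum_{1 \le m, n \le N} \mu(f_m f_n) \le C\, E_N(\mu)^2,
\]
the last inequality holding for infinitely many $N$ by hypothesis. Since $S_N^M$ is non-negative and vanishes off $\{S_N^M > 0\}$, the Cauchy--Schwarz inequality gives
\[
(T_N^M)^2 = \mu(S_N^M \cdot \ind_{\{S_N^M > 0\}})^2 \le \mu\bigl((S_N^M)^2\bigr)\cdot \mu(\{S_N^M > 0\}).
\]

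Next I would combine these two estimates. Because each $f_n$ is supported in $\cE_n$, we have $\{S_N^M > 0\} \subseteq \bigcup_{M \le n \le N} \cE_n$, so
\[
\mu\Bigl(\bigcup_{M \le n \le N} \cE_n\Bigr) \ge \frac{(T_N^M)^2}{\mu((S_N^M)^2)} \ge \frac{(T_N^M)^2}{C\, E_N(\mu)^2}.
\]
Since $\sum_{n=1}^\infty \mu(f_n) = \infty$, we have $E_N(\mu) \to \infty$, and consequently $T_N^M / E_N(\mu) = 1 - E_{M-1}(\mu)/E_N(\mu) \to 1$ as $N \to \infty$. Passing to the limit along the subsequence of $N$ for which the hypothesised second-moment bound holds, we obtain
\[
\mu\Bigl(\bigcup_{n \ge M} \cE_n\Bigr) \ge \frac{1}{C}.
\]

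Finally, since the sets $\bigcup_{n \ge M} \cE_n$ are nested and decreasing in $M$, continuity of $\mu$ from above yields
\[
\mu(\cE_\infty) = \mu\Bigl(\bigcap_{M=1}^\infty \bigcup_{n \ge M} \cE_n\Bigr) = \lim_{M \to \infty} \mu\Bigl(\bigcup_{n \ge M} \cE_n\Bigr) \ge \frac{1}{C},
\]
as required. There is no real obstacle here; the only mildly delicate point is ensuring that the tail correction $E_{M-1}(\mu)/E_N(\mu)$ is negligible as $N \to \infty$ along the good subsequence, which is guaranteed by the divergence hypothesis $E_N(\mu) \to \infty$.
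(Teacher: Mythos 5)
Your proposal is correct and follows essentially the same route as the paper: Cauchy--Schwarz applied to the tail sum $\sum_{M \le n \le N} f_n$ restricted to its support, followed by letting $N \to \infty$ along the good subsequence (where the ratio of tail to full first moment tends to $1$ by divergence) and then continuity of $\mu$ from above on the nested unions. No gaps.
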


\begin{proof} Denote
\[
f = \sum_{X < n \le Y} f_n,
\qquad
\cE = \bigcup_{X < n \le Y} \cE_n.
\]
By Cauchy--Schwarz,
\[
\mu(f)^2
= \mu(1_\cE f)^2
\le \mu(\cE) \mu(f^2).
\]
As
\[
\mu(f) = \sum_{X < n \le Y} \mu(f_n)
\]
and
\[
\mu(f^2) = \sum_{X < m,n \le Y} \mu(f_m f_n),
\]
we thus have
\begin{align*}
\mu(\cE) \ge \frac1C \left(
\frac{\displaystyle \sum_{X < n \le Y} \mu(f_n)}{\displaystyle \sum_{n \le Y} \mu(f_n)} \right)^2
\end{align*}
for infinitely many $Y \in \bN$. Taking $Y \to \infty$ yields
\[
\mu \left( \bigcup_{n > X} \cE_n \right) \ge \frac1C.
\]
Since
\[
\cE_\infty = \bigcap_{X=1}^\infty \bigcup_{n > X} \cE_n,
\]
we finally obtain
\[
\mu(\cE_\infty) = \lim_{X \to \infty} \mu \left( \bigcup_{n > X} \cE_n \right) \ge \frac1C.
\]
\end{proof}

\begin{lem} 
[Functional divergence theory conclusion]
\label{FunctionalDivergence}
Let $\psi: \bN \to [0,1)$. For $n \in \bN$, suppose $f_n: \bR^k \to [0,\infty)$ is supported on $A_n^\times$. Suppose we have ETP and VTP for this sequence of functions, as well as 
\begin{equation}
\label{DivTech}
\sum_{n=1}^\infty \lam(f_n) = \infty
\end{equation}
and the estimate \eqref{FunctionalLebesgueSecond}. Then $\mu(W_k^\times(\psi)) \ge 1/C$.
\end{lem}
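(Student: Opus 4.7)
The strategy is to apply the functional divergence Borel--Cantelli Lemma \ref{FunctionalDBC} to the sequence $(f_n)_{n \ge 1}$ with $\cE_n := A_n^\times$. Since each $f_n$ is supported on $A_n^\times$ by hypothesis, this is admissible, and
\[
\cE_\infty = \limsup_{n \to \infty} A_n^\times = W_k^\times(\psi),
\]
so its conclusion will directly yield a lower bound on $\mu(W_k^\times(\psi))$. Two hypotheses of Lemma \ref{FunctionalDBC} must then be verified: the divergence of $\sum_n \mu(f_n)$, and a second-moment bound of the form $\sum_{m,n \le N}\mu(f_m f_n) \le C' E_N(\mu)^2$ along some infinite sequence of $N$.

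The first is immediate from ETP and \eqref{DivTech}: we have $E_N(\mu) = (1+o(1)) E_N(\lam) + O(1)$, and $E_N(\lam) \to \infty$, hence $E_N(\mu) \to \infty$. For the second, since both ETP and VTP are in hand, I apply Lemma \ref{FunctionalSecondMomentCloseLemma} to get
\[
\sum_{m, n \le N} \mu(f_m f_n) = \sum_{m, n \le N} \lam(f_m f_n) + o(E_N(\mu)^2) + O(1)
\]
along some sequence $N \to \infty$. Inserting the Lebesgue estimate \eqref{FunctionalLebesgueSecond} and invoking ETP to replace $E_N(\lam)$ by $E_N(\mu)$ (note $E_N(\lam)^2 = (1+o(1)) E_N(\mu)^2 + O(E_N(\mu)) + O(1)$ and $E_N(\lam) = O(E_N(\mu)) + O(1)$), I obtain
\[
\sum_{m, n \le N} \mu(f_m f_n) \le (C + o(1)) E_N(\mu)^2 + O(E_N(\mu)) + O(1)
\]
along the same subsequence. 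Because $E_N(\mu) \to \infty$, the lower-order terms are $o(E_N(\mu)^2)$, and the right-hand side collapses to $(C + o(1)) E_N(\mu)^2$.

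It follows that, for any $\eta > 0$, the bound $\sum_{m,n \le N}\mu(f_m f_n) \le (C+\eta) E_N(\mu)^2$ holds for infinitely many $N$. Lemma \ref{FunctionalDBC} then gives $\mu(W_k^\times(\psi)) \ge 1/(C+\eta)$, and letting $\eta \to 0^+$ yields the desired $\mu(W_k^\times(\psi)) \ge 1/C$. There is no substantive obstacle in this argument; all the real work has been discharged into the ETP, VTP, the transfer Lemma \ref{FunctionalSecondMomentCloseLemma}, and the Lebesgue second-moment estimate \eqref{FunctionalLebesgueSecond}. The only point requiring care is that \eqref{FunctionalLebesgueSecond} carries a general constant $C$ rather than $1$, which is precisely why Lemma \ref{FunctionalDBC} was formulated for arbitrary $C \ge 1$; this constant is transmitted losslessly through the chain of estimates.
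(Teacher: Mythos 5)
Your proposal is correct and follows the same route as the paper: divergence of $\sum_n \mu(f_n)$ via ETP and \eqref{DivTech}, then Lemma \ref{FunctionalSecondMomentCloseLemma} combined with \eqref{FunctionalLebesgueSecond} and ETP to obtain the second-moment bound with constant $C' > C$ arbitrary, and finally Lemma \ref{FunctionalDBC}. The only difference is that you spell out the bookkeeping of the $o(\cdot)$ and $O(\cdot)$ terms, which the paper leaves implicit.
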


\begin{proof} 
Using the divergence of $\sum_n \lam(f_n)$, the ETP gives
\[
\sum_{n=1}^\infty \mu(f_n) = \infty.
\]
Combining Lemma 
\ref{FunctionalSecondMomentCloseLemma} with \eqref{FunctionalLebesgueSecond} and the ETP, if $C' > C$ then we also have 
\[
\sum_{m,n \le N} \mu(f_m f_n)
\le C' E_N(\mu)^2,
\]
along a sequence of $N \to \infty$. Lemma \ref{FunctionalDBC} completes the proof.
\end{proof}

\section{Rectangular decomposition}
\label{sec: rectangle decomposition}

Let $n \in \bN$. Let us begin with one collection of scales 
\[
d_1, d_2, \ldots, d_k \in (0,1/(2n)].
\]
We associate to each $d_j$ an interval $\cU_j$ which is $[0, nd_j)$ or $[nd_j/2, nd_j)$, and define
\[
A_n(\bd) = \{ \bx \in [0,1]^k: \| n x_j - y_j \| \in \cU_j \quad \forall j \}.
\]
Next, we smooth in each dimension. Let $b_1, b_2, \ldots, b_k: \bR \to [0,1]$ be non-zero bump functions supported on $[-2,2]$. Then
\begin{equation}
\label{decay}
\hat b_j(\xi) \ll_L 
(1 + |\xi|)^{-L} 
\qquad (1 \le j \le k),
\end{equation}
for any $L \ge 1$. 

\begin{remark} These will approximate $\{x: |x| \le 1 \}$ or $\{ x: 1/2 \le |x| \le 1 \}$. In the convergence setting, we only require the former.
\end{remark}

Define
\begin{align*}
\cR &= [-d_1, d_1] \times
\cdots \times [-d_k, d_k], \\
\cR^\vee &= [-1/d_1,1/d_1] \times \cdots \times [-1/d_k, 1/d_k], \\
\bb_\cR (\bx) &= \prod_{j \le k} b_j(x_j/d_j) \qquad 
(\bx \in \bR^k).
\end{align*}
Then
\[
\hat \bb_\cR(\bxi) = \prod_{i \le k} d_j \hat b_j(d_j \xi_j)
\qquad (\bxi \in \bR^k).
\]
By \eqref{decay}, we thus have
\begin{equation}
\label{decay2}
\frac{\hat \bb_\cR(\bxi)}
{d_1 \cdots d_k} \ll_L 2^{-mL}
\qquad
(\bxi \in 2^m \cR^\vee \setminus 2^{m-1} \cR^\vee).
\end{equation}
These considerations can be used to establish ETP and VTP for unions of rectangles, via the smooth approximation
\[
A_n^\dag(\bd; \bx) = 
\sum_{\ba \in \{1,2,\ldots,n\}^k} \bb_\cR \left( \bx - \frac{\ba + \by} n \right)
\]
to the indicator function of $A_n(\bd)$. For Fourier-analytic purposes, it helps to extend this periodically to
\begin{equation}
\label{AnStar}
A_n^*(\bd; \bx)
= \sum_{\bm \in \bZ^k}
A_n^\dag(\bd; \bx + \bm) = 
\sum_{\ba \in \bZ^k} \bb_\cR \left( \bx - \frac{\ba + \by}n \right).
\end{equation}

\bigskip

To see how all of this applies to the `hyperbolic' sets $A_n^\times$, define
\[
h_i = 2^{-i}
\]
for each integer $i$ in some range. The range is
\[
\frac{\psi(n)}{n} \le 
h_i \le \frac1n
\]
in the convergence setting. In the divergence setting, for $n \in [2^{m-1}, 2^m)$, the range is
\[
\frac{1}{2^{(1+(1+\tau)/k)m}} \ll
h_i \ll \frac{1}{2^{(1+(1-\tau)/k)m}},
\]
for some small constant $\tau > 0$. Define
\begin{align*}
\fB_n &= \bigsqcup_{i_1,\ldots,
i_{k-1}}
A_n(h_{i_1}, \ldots, 
h_{i_{k-1}}, h), \\
\fC_n &=
\bigcup_{i_1,\ldots,
i_{k-1}}
A_n(h_{i_1}, \ldots, h_{i_{k-1}}, 2^{k-1}H),
\end{align*}
where
\[
h h_{i_1} \cdots h_{i_{k-1}}
= \frac{\psi(2^m)}{2^{km}},
\qquad
H h_{i_1} \cdots h_{i_{k-1}}
= \frac{\psi(n)}{n^k}.
\]

\begin{remark} For ease of notation, we have suppressed the weights $\bb$, which depend on $\bi$, and which can be different for $\fB_n$ compared to $\fC_n$.
\end{remark}

We then have
\[
\fB_n \subseteq A_n^\times \subseteq \fC_n.
\]
We will discuss this setup further in Example \ref{MultOK}.

Note that there are
$\asymp \log^{k-1}(1/\psi(n))$
many choices of $(i_1,\ldots,i_{k-1})$ in the convergence setting, whereas our results involve the quantity $(\log n)^{k-1}$. Furthermore, in the divergence setting where we will use $\fB_n$, the value of the product $h h_{i_1} \cdots h_{i_{k-1}}$ involves $\psi(2^m)$ instead of $\psi(n)$, and there is nothing to suggest that these two quantities are at all close. We will settle these differences in \S \ref{finale}.

\bigskip

In order to be able to estimate the Lebesgue second moment, we also require $\fB_n$ to have Property $\cP_n(\by)$, which we now define similarly to Gallagher's Property $\cP$ from \cite{Gal1962}. A set $\cZ \subseteq [-1/2,1/2]^k$ is \emph{strongly star-shaped} if
\[
\bx \in \cZ \quad
\text{and} \quad |z_j| \le |x_j| \: \forall j \quad \Longrightarrow \quad \bz \in \cZ.
\]
A set $\cX$ in $\bR^k$ has Property $\cP_n(\by)$ if 
\[
\cX = \cZ + n^{-1} (\by + \bZ^k),
\]
for some strongly star-shaped set $\cZ$. 

\section{Expectation transference principle}
\label{sec: ETP}

\subsection{Curved ETP}

This subsection is devoted to establishing the following theorem.

\begin{thm}[Pointwise curved ETP]
\label{CurvedETP}
Let $n \in \bN$ and
$A_n^*(\bx) = 
A_n^*(\bd; \bx)$. Let $\mu$ be a probability measure on $[0,1]^k$ such that 
$\hat \mu(\bxi) \ll (1 + |\bxi|)^{-\sig}$, for some fixed $\sig \in (0,k)$. Then
\[
\mu(A_n^*) = \lam(A_n^*)
\left(1 + O\left(
\frac{(d_1 \cdots d_k)^{-(1-\sig/k)}}{n^k} \right)
\right).
\]
\end{thm}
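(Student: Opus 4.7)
The plan is to expand $A_n^*$ as a Fourier series via Poisson summation, isolate the Lebesgue (zero-frequency) term, and bound the contribution of the remaining frequencies using the decay of $\hat\bb_\cR$ from \eqref{decay} together with the hypothesis $\hat\mu(\bxi) \ll (1+|\bxi|)^{-\sig}$. Since $A_n^*(\bd; \cdot)$ is $(1/n)$-periodic on $\bR^k$, Poisson summation applied to the Schwartz function $\bx \mapsto \bb_\cR(\bx - \by/n)$ gives
\[
A_n^*(\bd; \bx) = n^k \sum_{\bm \in \bZ^k} \hat\bb_\cR(n\bm)\, e(n\bm\cdot\bx - \bm\cdot\by).
\]
Integrating against Lebesgue measure retains only the $\bm = \bzero$ term by orthogonality of characters on the unit cube, yielding $\lam(A_n^*) = n^k \hat\bb_\cR(\bzero) \asymp n^k (d_1\cdots d_k)$. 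Integrating against $\mu$ and subtracting,
\[
\mu(A_n^*) - \lam(A_n^*) = n^k \sum_{\bm \ne \bzero} \hat\bb_\cR(n\bm)\, e(-\bm\cdot\by)\, \hat\mu(-n\bm).
\]

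Applying $|\hat\bb_\cR(\bxi)| \ll (d_1\cdots d_k)\prod_j(1+d_j|\xi_j|)^{-L}$ and the hypothesis on $\hat\mu$, the task reduces to showing that
\[
S := \sum_{\bm \ne \bzero} \prod_{j=1}^k (1+d_jn|m_j|)^{-L}\,(1+n|\bm|)^{-\sig} \ll \frac{(d_1\cdots d_k)^{\sig/k-1}}{n^k}.
\]
I would establish this by decomposing the sum according to the support $T = \{j : m_j \ne 0\}$. For each non-empty $T \subseteq \{1,\ldots,k\}$, the inequality chain
\[
(1+n|\bm|)^{-\sig} \le \Bigl(\max_{j\in T}(1+n|m_j|)\Bigr)^{-\sig} \le \prod_{j \in T}(1+n|m_j|)^{-\sig/|T|},
\]
which uses $\max_j|m_j|\le|\bm|$ followed by $(\max_j a_j)^{|T|} \ge \prod_j a_j$, factorises the support-$T$ contribution as a product over the active coordinates. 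A standard one-dimensional comparison, splitting at the scale $|m|=1/(d_jn)$ and exploiting $d_j\le 1/(2n)$, gives
\[
\sum_{m\ne 0}(1+d_jn|m|)^{-L}(1+n|m|)^{-\sig/|T|} \asymp \begin{cases}d_j^{\sig/|T|-1}/n, & \sig/|T|<1, \\ n^{-\sig/|T|}, & \sig/|T|>1,\end{cases}
\]
with a logarithmic factor at the boundary $\sig/|T|=1$ which is absorbed since $u^{1-\sig/k}\log(1/u)$ is uniformly bounded on $(0,1/2]$. A direct calculation using $d_j \le 1/(2n)$ then shows that the ratio of each subset-$T$ contribution (after the $(d_1\cdots d_k)$ factor from $\hat\bb_\cR$ is restored) to the target $(d_1\cdots d_k)^{\sig/k}/n^k$ is bounded by $2^{-(k-|T|)}$ or $2^{\sig-k}$ respectively; summing over the $2^k-1$ non-empty subsets preserves the bound.

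Combining, $|\mu(A_n^*) - \lam(A_n^*)| \ll n^k(d_1\cdots d_k)\,S \ll (d_1\cdots d_k)^{\sig/k} \asymp \lam(A_n^*)\cdot (d_1\cdots d_k)^{-(1-\sig/k)}/n^k$, as claimed. The main obstacle is securing the sharp exponent $\sig/k$ on $(d_1\cdots d_k)$: the naive uniform bound $(1+n|\bm|)^{-\sig} \le \prod_j(1+n|m_j|)^{-\sig/k}$ is wasteful on $\bm$ with few non-zero coordinates, and the support-size decomposition --- which distributes the $\sig$-decay only over the coordinates where $\bm$ is actually non-zero --- is the device that correctly balances the product-type decay of $\hat\bb_\cR$ against the isotropic decay of $\hat\mu$.
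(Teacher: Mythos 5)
Your proposal is correct, and the core estimate is carried out by a genuinely different device than the paper's. Both arguments begin identically: the Fourier expansion of $A_n^*$ supported on $n\bZ^k$ (your Poisson-summation formula is just the explicit form of the paper's identity \eqref{agreement}), the isolation of the zero frequency, and the reduction to bounding $\sum_{\bm \ne \bzero} |\hat\bb_\cR(n\bm)\hat\mu(n\bm)|$. The paper then proceeds geometrically: it orders $d_1 \ge \cdots \ge d_k$, truncates to dilates $2^u \cR_n^\vee$ of the dual box via Lemma \ref{EssentialSupport}, slices by dyadic balls $B_{2^v}$, and counts lattice points of $n\bZ^k$ in each intersection, balancing the count against the decay $2^{-\sig v}$ of $\hat\mu$. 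You instead decompose by the support $T$ of $\bm$, redistribute the isotropic decay $(1+n|\bm|)^{-\sig}$ over the active coordinates via the max-versus-geometric-mean inequality, and factorise into one-dimensional sums; I checked that your case analysis ($\sig/|T|$ less than, greater than, or equal to $1$) does recover the target exponent with the claimed $2^{-(k-|T|)}$ and $2^{\sig-k}$ factors, using only $nd_j \le 1/2$. Your route is shorter and makes the origin of the exponent $\sig/k$ transparent (it is exactly the AM--GM balancing over coordinates), and it avoids ordering the $d_j$. The paper's counting formulation has the advantage of being reusable: the essential-support lemma and the lattice-point counts reappear almost verbatim in the VTP arguments of \S \ref{sec: VTP}, where pairs of frequencies and arithmetic conditions on $n, n'$ enter and a clean factorisation over coordinates is no longer available.
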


As $b_1, b_2, \ldots, b_k$ are Schwartz, we have an absolutely and uniformly convergent Fourier series expansion
\[
A_n^*(\bx) = \sum_{\bxi \in \bZ^k} \hat A_n^*(\bxi) 
e(\langle \bxi, \bx \rangle)
\]
where, for $\bxi \in \bZ^k$,
\begin{align}
\notag
\hat A_n^*(\bxi) &= \int_{[0,1]^k}
A_n^*(\bx) e(- \langle \bxi, \bx \rangle) \d \bx \\
\notag
&= \int_{[0,1]^k} \sum_{\bm \in \bZ^k} A_n^\dag(\bx + \bm) 
e(- \langle \bxi, \bx \rangle) \d \bx \\
\label{agreement}
&= \int_{[0,1]^k} \sum_{\bm \in \bZ^k} A_n^\dag(\bx + \bm) 
e(- \langle \bxi, \bx + \bm \rangle) \d \bx = \hat A_n^\dag(\bxi).
\end{align}
This standard calculation explains that the Fourier coefficient of $A_n^*$ equals the Fourier transform of $A_n^\dag$ at integer vectors.

Since $\lam(A_n^*) = \hat A_n^*(\bzero)$, we see from Parseval's theorem that
\[
\int_{[0,1]^k} 
(A_n^* - \lam(A_n^*)) \d \mu 
= \sum_{\bxi \ne \bzero} 
\hat A_n^*(\bxi) \hat \mu(-\bxi).
\]
Writing
\[
a_n(\bxi) = \frac{\hat A_n^*(\bxi)}{\lam(A_n^*)},
\]
we thus have
\begin{align*}
|\mu(A_n^*) - \lam(A_n^*)| &= 
\left| \int_{[0,1]^k} 
(A_n^* - \lam(A_n^*)) 
\d \mu \right| \\
&\le \lam(A_n^*)
\sum_{\bxi \ne \bzero} 
|a_n(\bxi) \hat \mu(-\bxi)|.
\end{align*}
We note that
\[
\lam(A_n^*) =  d_1 \cdots d_k n^k \prod_{j \le k} \| b_j \|_1.
\]

We assume, without loss of generality, that $d_1 \ge \cdots \ge d_k$, and write $S_i = d_i^{-1}$ for all $i$, so that
\[
S_1 \le S_2 \le \cdots \le S_k.
\]
As $A_n^*$ is $n^{-1}$-periodic, its normalised Fourier coefficients $a_n$ vanish away from $n \bZ^k$. Moreover, by the uncertainty principle, they are essentially supported on
\[
\cR_n^\vee = [-S_1, S_1] \times \cdots \times [-S_k, S_k].
\]
We formalise this below.

\begin{lem} \label{EssentialSupport}
Let $L, t \ge 1$ and $\bxi \in \bZ^k \setminus t \cR_n^\vee$. Then
\[
a_n(\bxi) \ll_L t^{-L}.
\]
\end{lem}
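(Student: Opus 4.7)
The strategy is to compute $a_n(\bxi)$ in closed form from the periodisation identity and then read the bound off the Schwartz decay of each bump $\hat b_j$. First I would dispose of $\bxi \notin n\bZ^k$, where $a_n(\bxi) = 0$ as already noted, making the bound trivial. For $\bxi \in n\bZ^k$, the identity $\hat A_n^*(\bxi) = \hat A_n^\dag(\bxi)$ from \eqref{agreement}, together with the definition of $A_n^\dag$, reduces the Fourier coefficient to a coordinatewise product: each of the one-dimensional sums $\sum_{a_j=1}^n e(-\xi_j a_j/n)$ contributes a factor of $n$ (since $n \mid \xi_j$), producing the clean expression
\[
\hat A_n^*(\bxi) = n^k\, e\!\left(-\langle \bxi, \by\rangle/n\right) \hat \bb_\cR(\bxi).
\]

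Combining this with $\lam(A_n^*) = n^k d_1 \cdots d_k \prod_j \|b_j\|_1$ and the tensor factorisation $\hat \bb_\cR(\bxi) = \prod_j d_j \hat b_j(d_j \xi_j)$ would yield
\[
|a_n(\bxi)| \le \prod_{j \le k} \frac{|\hat b_j(d_j \xi_j)|}{\|b_j\|_1}.
\]
The hypothesis $\bxi \notin t\, \cR_n^\vee$ forces some index $j_0$ to satisfy $|d_{j_0} \xi_{j_0}| > t \ge 1$; applying the rapid decay \eqref{decay} at that index gives $|\hat b_{j_0}(d_{j_0}\xi_{j_0})| \ll_L (1+t)^{-L} \ll_L \|b_{j_0}\|_1 t^{-L}$, while the trivial bound $|\hat b_j(d_j \xi_j)| \le \|b_j\|_1$ controls each remaining factor by $1$.

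There is no substantive obstacle here: the argument is a direct Fourier computation that exploits the rank-one structure of the periodised bump and the Schwartz decay of each $b_j$. The only genuinely essential observation is that $a_n$ is supported on $n\bZ^k$, which is precisely what pins the essential support down to the box $\cR_n^\vee$ rather than something larger.
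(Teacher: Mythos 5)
Your proposal is correct and follows essentially the same route as the paper: identify $\hat A_n^*(\bxi)$ with $\hat A_n^\dag(\bxi)$ via \eqref{agreement}, factor out $\hat\bb_\cR(\bxi)$, and invoke the rapid decay of the bump transforms outside $t\cR_n^\vee$. The only cosmetic difference is that you evaluate the exponential sum exactly on $n\bZ^k$ (and note vanishing off it), whereas the paper simply bounds it by $n^k$ in absolute value and cites the tensorised decay estimate \eqref{decay2}; both yield the same conclusion.
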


\begin{proof} Using \eqref{agreement} and then \eqref{decay2}, we compute that
\begin{align*}
\hat A_n^*(\bxi) &=
\hat A_n^\dag(\bxi) =
\sum_{\ba \in \{1,2,\ldots,n\}^k} e( - \langle \bxi, 
\ba + \by \rangle /n) \hat b_{\cR_n} (\bxi) \\ &\ll n^k d_1 \cdots d_k t^{-L}
\ll \lam(A_n^*) t^{-L}.
\end{align*}
\end{proof}

Let us now resume proving Theorem \ref{CurvedETP}. Define
\[
E_0 = 
\sum_{\bzero \ne \bxi \in \cR_n^\vee} 
|a_n(\bxi) \hat \mu(-\bxi)|
\]
and
\[
E_u =
\sum_{\bxi \in 2^u \cR_n^\vee \setminus 2^{u-1} \cR_n^\vee} 
|a_n(\bxi) \hat \mu(-\bxi)|
\qquad (u \in \bN),
\]
so that
\begin{equation}
\label{CurvedETPdecomp}
|\mu(A_n^*) - \lam(A_n^*)| \le \lam(A_n^*) \sum_{u=0}^\infty E_u.
\end{equation}
As $|a_n(\bxi)| \le 1_{n \mid \bxi}$, Lemma \ref{EssentialSupport} yields
\[
E_u \ll_L 2^{-uL}  
\sum_{\substack{\bzero \ne \bxi \in 2^u \cR_n^\vee \\ n \mid \bxi}} |\hat \mu(-\bxi)|
\qquad (u \ge 0).
\]
Theorem \ref{CurvedETP} now follows immediately from \eqref{CurvedETPdecomp} and the following estimate.

\begin{lem} We have
\[
\sum_{\substack{\bzero \ne \bxi \in 2^u \cR_n^\vee \\ n \mid \bxi}} |\hat \mu(-\bxi)| \ll \frac{(2^{ku} S_1 \cdots S_k)^{1-\sig/k}}{n^k} \qquad (u \ge 0).
\]
\end{lem}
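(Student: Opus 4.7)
The plan is to invert the substitution $\bxi = n\bm$ and apply the pointwise Fourier decay of $\mu$, reducing the estimate to an elementary lattice-sum inequality. Since $d_i \le 1/(2n)$ we have $S_i = d_i^{-1} \ge 2n$, so $T_i := 2^u S_i/n \ge 2^{u+1} \ge 2$, and the conditions $\bxi \in 2^u \cR_n^\vee$ and $n \mid \bxi$ become $\bm \in B := \prod_{i=1}^k [-T_i, T_i]$ with $\bm \in \bZ^k$. For $\bm \ne \bzero$ one has $|\bm| \ge 1$, so $|\hat\mu(-n\bm)| \ll (1+n|\bm|)^{-\sig} \ll (n|\bm|)^{-\sig}$. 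Writing $V := \prod_i T_i = 2^{ku} S_1 \cdots S_k/n^k$, it therefore suffices to prove
\[
\Sigma := \sum_{\bm \in B \setminus \{\bzero\}} |\bm|^{-\sig} \ll V^{1-\sig/k},
\]
since then $n^{-\sig} V^{1-\sig/k} = (2^{ku}S_1\cdots S_k)^{1-\sig/k}/n^k$, as desired.

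To estimate $\Sigma$, I partition according to the support $J := \{i : m_i \ne 0\}$, which is non-empty. For each non-empty $J \subseteq \{1,\ldots,k\}$ of cardinality $k'$, AM-GM applied to the $k'$ non-zero squares yields
\[
|\bm|^2 = \sum_{i \in J} m_i^2 \ge k' \Bigl( \prod_{i \in J} m_i^2 \Bigr)^{1/k'},
\]
so $|\bm|^{-\sig} \ll \prod_{i \in J} |m_i|^{-\sig/k'}$ and the restricted sum factors as $\prod_{i \in J} \sum_{1 \le m \le T_i} m^{-\sig/k'}$. The inner one-dimensional sum is $\ll T_i^{1-\sig/k'}$ when $k' > \sig$, $\ll \log T_i$ when $k' = \sig$, and $\ll 1$ when $k' < \sig$.

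The inequality that closes the proof is
\[
\prod_{i \in J} T_i^{1-\sig/k'} \le V^{1-\sig/k},
\]
which holds because the ratio
\[
\prod_{i \in J} T_i^{(1-\sig/k') - (1-\sig/k)} \cdot \prod_{i \notin J} T_i^{-(1-\sig/k)}
\]
has all exponents non-positive (using $k' \le k$ for the first product and $\sig < k$ for the second) while all bases satisfy $T_i \ge 1$. The borderline logarithmic case $k' = \sig$ is absorbed via $\log T \ll_\delta T^\delta$ for any $\delta > 0$; choosing $\delta$ with $k'\delta < 1 - \sig/k$ (possible because $\sig < k$) turns it into a power savings swallowed by $V^{1-\sig/k}$. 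The constant-contribution case $k' < \sig$ is trivially $\ll 1 \le V^{1-\sig/k}$ since $V \ge 2^k$. Summing over the $2^k - 1$ choices of $J$ absorbs into the implied constant and yields $\Sigma \ll V^{1-\sig/k}$. The only place requiring genuine care is verifying the monotonicity inequality uniformly across the range of $k'$, which is the mild technical obstacle; once established, the rest is bookkeeping.
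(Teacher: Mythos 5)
Your proof is correct, and it takes a genuinely different route from the paper. After normalising by $\lam(A_n^*)$ is irrelevant here, the paper attacks the sum in the original variables $\bxi$ by dyadic decomposition in the \emph{radius}: it sums over shells $B_{2^v}\setminus B_{2^{v-1}}$, counts points of $n\bZ^k$ in $2^u\cR_n^\vee$ intersected with each shell (which forces a case analysis on where $2^v$ sits relative to the ordered side lengths $2^{u+1}S_1\le\cdots\le 2^{u+1}S_k$), weights each point by $2^{-\sig v}$, and verifies via an exponent-balancing inequality that every shell's contribution is dominated by the target. You instead rescale to the unit lattice ($\bxi=n\bm$), reduce to the clean statement $\sum_{\bm\in B\setminus\{\bzero\}}|\bm|^{-\sig}\ll V^{1-\sig/k}$, and prove that by \emph{separation of variables}: partition by the support of $\bm$, apply AM--GM to factor $|\bm|^{-\sig}$ into one-dimensional sums $\sum_{m\le T_i}m^{-\sig/k'}$, and close with the monotonicity inequality $\prod_{i\in J}T_i^{1-\sig/k'}\le V^{1-\sig/k}$. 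All the steps check out: the arithmetic $n^{-\sig}V^{1-\sig/k}=(2^{ku}S_1\cdots S_k)^{1-\sig/k}/n^k$ is right, $T_i\ge 2$ follows from $d_i\le 1/(2n)$ so the logarithmic and constant cases are handled legitimately, and the exponent comparison uses exactly the hypotheses $k'\le k$ and $\sig<k$. Your argument is arguably more elementary and does not need the ordering $S_1\le\cdots\le S_k$; the paper's radial decomposition is more geometric and would adapt to regions that are not boxes, but for this lemma the two are interchangeable.
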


\begin{proof}
For $v \in \bN$, define
\[
e_{u,v} = \sum_{n \mid \bxi \in 2^u \cR_n^\vee \cap B_{2^v} \setminus B_{2^{v-1}}} |\hat \mu(-\bxi)|,
\]
and note that this vanishes unless $2^v \ge n$ and $2^{v-1} < 2^u S_k$. Now
\[
\sum_{\substack{\bzero \ne \bxi \in 2^u \cR_n^\vee \\ n \mid \bxi}} |\hat \mu(-\bxi)| \le \sum_{j=0}^{k-1} E_{u,j},
\]
where
\begin{align*}
E_{u,0} &= 
\sum_{n \le 2^v < 2^{u+1} S_1}
e_{u,v}, \\
E_{u,j} &= \sum_{2^{u+1} S_j \le 2^v < 2^{u+1} S_{j+1}} e_{u,v}
\qquad (1 \le j \le k-1).
\end{align*}

First, suppose $n \le 2^v < 2^{u+1} S_1$. Then there are $O(2^{kv}/n^k)$ many summands, each of size $O(2^{-\sig v})$, whence
\[
E_{u,0} \ll \sum_{n \le 2^v < 2^{u+1} S_1}
\frac{(2^v)^{1-\sig/k}}{n^k}
\ll \frac{(2^{ku} S_1^k)^{1-\sig/k}}{n^k}
\ll \frac{(2^{ku} S_1 \cdots S_k)^{1-\sig/k}}{n^k}.
\]

Next, suppose $2^{u+1} S_j \le 2^v < 2^{u+1} S_{j+1}$ for some 
$j \in \{1,2,\ldots,k-1\}$. Then there are $O(2^{ju} S_1 \cdots S_j 2^{(k-j)v}/n^k)$ many summands, each of size 
$O(2^{-\sig v})$, so
\begin{align*}
e_{u,v} &\ll \frac{2^{ju} S_1 \cdots S_j 2^{(k-j-\sig)v}}{n^k}.
\end{align*}
As
\[
\prod_{i < j} \left( \frac{2^v}{2^{u+1} S_i} \right)^{\sig/k} \cdot \prod_{i > j} \left(\frac{2^{u+1} S_i}{2^v} \right)^{1-\sig/k} \ge 1,
\]
we obtain
\[
e_{u,v} \ll \frac{(2^{ku} S_1 \cdots S_k)^{1-\sig/k}}{n^k} \left(\frac{2^u S_j}{2^v} \right)^{\sig/k},
\]
whence
\[
E_{u,j} \ll \frac{(2^{ku} S_1 \cdots S_k)^{1-\sig/k}}{n^k}.
\]
\end{proof}

\subsection{Flat ETP}
\label{FlatETPsection}

Fix an integer $k \ge 2$, and let $\cH$ be an affine hyperplane with normal vector 
\[
\balp = (1, \alp_2, \alp_3, \ldots, \alp_k).
\]
Fix 
\[
\cB = [-B_1, B_1] \times \dots \times [-B_{k-1}, B_{k-1}] \subset \bR^{k-1},
\]
and let $r_1, \ldots, r_{k-1} > 0$ be small, positive constants.
Let $\eta > 0$ be small in terms of $\balp, \cB, \br$. Our implied constants are allowed to depend on $\balp, \cB, \br$, but not on $\eta$. We will construct smooth, compactly-supported probability measures $\mu_0$ on $\cH$. 

Let
\[
\fb_\eta^\dag(\bx) =
b_1(x_1/r_1) \cdots b_{k-1}(x_{k-1}/r_{k-1}) b_k(x_k/\eta), \qquad
\mu_\eta^\dag = 
\frac{\mu_k * \fb_\eta^\dag}{\| \mu_k * \fb_\eta^\dag \|_1},
\]
where $\mu_k$ is Lebesgue measure on the hyperplane 
$\{ x_k = 0 \}$ intersected with $\cB$. Then $\mu_\eta^\dag$ is a Schwartz function that is essentially supported on
\[
\cR^\dag = \cB \times [-\eta, \eta].
\]
Its Fourier transform $\hat \mu_\eta^\dag$ is essentially supported on
\[
\cR^\vee_\dag = 
\cB^\vee \times [-\eta^{-1}, \eta^{-1}],
\]
where $$\cB^\vee = [-B_1^{-1}, B_1^{-1}] \times \cdots \times [-B_{k-1}^{-1}, B_{k-1}^{-1}].$$
As $\eta \to 0$, the measure $\mu_\eta^\dag$ converges weakly to a smooth probability measure on the hyperplane $\{ x_k = 0 \}$.

We apply a rotation to $\mu_\eta^\dag$, $\cR^\dag$ and $\cR_\dag^\vee$ to obtain $\mu_\eta^\diam$, $\cR$, and $\cR^\vee$, respectively, so that the tube $\cR^\vee$ points in the direction $\balp$. Then $\mu_\eta^\diam$ is  essentially supported on $\cR$. We write $\varrho^\vee$ for this rotation in frequency space, so that $\hat \mu_\eta^\diam(\bxi)
= \hat \mu_\eta^\dag(\varrho^\vee \bxi)$
and $\varrho^\vee \cR^\vee = \cR^\vee_\dag$.
Some translate $\mu_\eta$
of $\mu_\eta^\diam$ approximates $\mu$, and 
\[
|\hat \mu_\eta (\bxi)| = |\hat \mu_\eta^\diam (\bxi)| =
\frac{|\hat \mu_k \hat \fb_\eta^\dag (\varrho^\vee \bxi)|}{\| \mu_k * \fb_\eta^\dag \|_1}
\ll \eta^{-1} |\hat \fb_\eta^\dag
(\varrho^\vee \bxi)|,
\]
so if $L,t \ge 1$ and $\bxi \in \bR^k \setminus t \cR^\vee$ then
\begin{equation} \label{HyperplaneDecay}
\hat \mu_\eta(\bxi) \ll_L t^{-L}.
\end{equation}
Finally, we define the periodic extension
\[
\mu_\eta^*(\bx) = \sum_{\bm \in \bZ^k} \mu_\eta(\bx + \bm),
\]
and note that the Fourier series of $\mu_\eta^*$ is consistent with the Fourier transform of $\mu_\eta$ in the sense that
\[
\hat \mu_\eta^* (\bxi)
= \hat \mu_\eta(\bxi) \qquad (\bxi \in \bZ^k).
\]

Note that $\mu_\eta$ converges weakly to a smooth probability measure $\mu_0$ on $\cH$, as $\eta \to 0$. Moreover, the restriction of $\mu$ to a small ball is absolutely continuous with respect to some $\mu_0$.
This subsection is devoted to establishing the following theorem. 

\begin{thm} 
[Pointwise flat ETP]
\label{FlatETP}
Let $\mu_0$ be as above, and assume that
\[
\alp_2 \cdots \alp_\ell \ne 0,
\qquad
\ome := \ome(\alp_2, \ldots, \alp_k) < \infty.
\]
Let $n \in \bN$ and
$A_n^*(\bx) = 
A_n^*(\bd; \bx)$. 
Assume that, for some fixed
$\varpi > \ome$, 
\[
\max 
\{ d_j: 1 \le j \le \ell \}
\gg \frac1{n^{1 + 1/\varpi}}.
\]
Then
\[
\mu_0(A_n^*) \sim \lam(A_n^*) 
\qquad (n \to \infty).
\]
\end{thm}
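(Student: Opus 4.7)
The plan is to mirror the curved ETP proof of Theorem~\ref{CurvedETP}, with the pointwise Fourier decay of $\mu$ replaced by the tube-shaped essential support of $\hat\mu_0$ around the direction of $\balp$. Since $A_n^*$ is $n^{-1}$-periodic and $\hat A_n^*$ is Schwartz, Parseval gives
\[
\mu_0(A_n^*) - \lam(A_n^*) = \sum_{\bxi \in n\bZ^k\setminus\{\bzero\}} \hat A_n^*(\bxi)\,\overline{\hat\mu_0(\bxi)},
\]
so, writing $a_n(\bxi) = \hat A_n^*(\bxi)/\lam(A_n^*)$, it will suffice to prove that $\sum_\bxi |a_n(\bxi)\,\hat\mu_0(\bxi)| = o(1)$ as $n \to \infty$. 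Two essential-support inputs will drive the analysis. First, Lemma~\ref{EssentialSupport} gives $|a_n(\bxi)| \ll_L t^{-L}$ for $\bxi \notin t\cR_n^\vee$. Second, because $\mu_0$ is a smooth, compactly supported density on $\cH$, in rotated coordinates $\hat\mu_0$ is a Schwartz function of the $k-1$ coordinates transverse to $\balp$ (and constant in the $\balp$ direction), yielding $|\hat\mu_0(\bxi)| \ll_L s^{-L}$ whenever $\bxi \notin s\cR^\vee$, where $\cR^\vee$ is the tube of \S\ref{FlatETPsection}.

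The crux will be to show that
\[
(n\bZ^k \setminus \{\bzero\}) \cap \cR_n^\vee \cap \cR^\vee = \emptyset
\]
for all sufficiently large $n$. Writing $\bxi = n\bn$ with $\bn \in \bZ^k$ and decomposing $\bn = t\balp + \bn^\perp$, the constraint $\bxi \in \cR^\vee$ will force $|\bn^\perp| \ll 1/n$, whence $|n_i - t\alp_i| \ll 1/n$ for every $i$. Suppose first $n_1 \ne 0$: then $t = n_1 + O(1/n)$; for $i \in \{2,\ldots,\ell\}$ we will have $\|n_1\alp_i\| \ll 1/n$; and for $i > \ell$ the bound forces $n_i = 0$ once $n$ is large. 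Fix any $\varpi' \in (\ome, \varpi)$; by the definition of $\ome$, for $|n_1|$ above some threshold, $\max_{2\le i \le \ell}\|n_1\alp_i\| \ge |n_1|^{-\varpi'}$, so $|n_1| \gg n^{1/\varpi'}$. On the other hand, for any $j^* \in \{1,\ldots,\ell\}$ one has $|n_{j^*}| \gtrsim |n_1|$ (trivially for $j^* = 1$, otherwise via $|n_{j^*} - n_1\alp_{j^*}| \ll 1/n$ and $\alp_{j^*} \ne 0$), and $\bxi \in \cR_n^\vee$ gives $|n_{j^*}| \le 1/(n d_{j^*})$, so
\[
d_{j^*} \le \frac{1}{n |n_{j^*}|} \ll \frac{1}{n^{1 + 1/\varpi'}} = o\!\left(\frac{1}{n^{1 + 1/\varpi}}\right)\qquad (n \to \infty),
\]
contradicting the hypothesis. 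The case $n_1 = 0$ will similarly force $t = O(1/n)$ and then $\bn = \bzero$.

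To control the full sum, I will decompose dyadically into shells $2^u\cR_n^\vee \setminus 2^{u-1}\cR_n^\vee$ and $2^v\cR^\vee \setminus 2^{v-1}\cR^\vee$. The same emptiness argument, with $1/n$ replaced by $2^{u+v}/n$, will show that shells with $u + v/\varpi'$ below a threshold of order $(1/\varpi' - 1/\varpi)\log_2 n$ are empty; above the threshold, I will bound the lattice count polynomially in $n$, $2^u$ and $2^v$, and absorb this into the product decay $2^{-(u+v)L}$ by taking $L$ large enough. I expect the main obstacle to be this tail bookkeeping: the strict inequality $\varpi' < \varpi$ must do real work, as only then does the threshold genuinely grow with $n$, so that the polynomial lattice counts are eventually beaten by the decay and the whole sum is $o(1)$.
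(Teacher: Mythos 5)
Your proposal is correct and follows essentially the same route as the paper: a dyadic decomposition over shells of the box $\cR_n^\vee$ intersected with dilates of the tube $\cR^\vee$, the diophantine lower bound $\max_i\|n_1\alp_i\|\ge|n_1|^{-\varpi'}$ forcing low shells to be empty via the hypothesis on $\max\{d_j: j\le\ell\}$ (your threshold $2^u2^{v/\varpi'}\gg n^{1/\varpi'-1/\varpi}$ is the paper's $2^{u(\ome+\del)}2^v\gg n^{\del/\varpi}$ in disguise), and absorption of the polynomially bounded lattice counts into the rapid decay in $u$ and $v$. The only difference is cosmetic: you work with $\mu_0$ and its infinite essential-support tube directly, whereas the paper proves the estimate for the mollified measures $\mu_\eta$ uniformly in $\eta$ and then passes to the weak limit.
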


\begin{remark} We have not assumed that any of $\alp_2, \ldots, \alp_k$ are non-zero. We have written $k - \ell$ for the number of these that vanish, and re-ordered them so that the non-zero ones come first.
\end{remark}

The above result follows from the next, which has a further smoothing.

\begin{thm} 
\label{FlatSmoothETP}
Let $\mu_\eta$ be as above, and assume that
\[
\alp_2 \cdots \alp_\ell \ne 0,
\qquad
\ome := \ome(\alp_2, \ldots, \alp_k) < \infty.
\]
Let $n \in \bN$ and
$A_n^*(\bx) = 
A_n^*(\bd; \bx)$. 
Assume that, for some fixed
$\varpi > \ome$,
\[
\max 
\{ d_j: 1 \le j \le \ell \}
\gg \frac1{n^{1 + 1/\varpi}}.
\]
Then
\[
\mu_\eta^*(A_n^*) = \lam(A_n^*) (1 + O(n^{1-k})).
\]
\end{thm}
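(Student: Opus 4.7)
The plan is to expand $\mu_\eta^*(A_n^*)$ as a Fourier series, isolate the zero-frequency contribution, and bound the remainder using the Diophantine hypothesis together with the geometric shape of $\cR^\vee$, in parallel with the curved ETP proof. Applying Parseval on $[0,1]^k$ and using that $\hat\mu_\eta^*(\bxi) = \hat\mu_\eta(\bxi)$ for $\bxi \in \bZ^k$,
\[
\mu_\eta^*(A_n^*) \;=\; \sum_{\bxi \in \bZ^k} \hat A_n^*(\bxi)\, \hat\mu_\eta(-\bxi).
\]
The zero-frequency term equals $\lam(A_n^*)$, and the rest of the proof is devoted to showing that the remaining sum is $O(\lam(A_n^*) \cdot n^{1-k})$. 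By the $n^{-1}$-periodicity of $A_n^*$, $\hat A_n^*(\bxi) = 0$ unless $\bxi \in n\bZ^k$, so I write $\bxi = n\bm$ with $\bm \in \bZ^k \setminus \{\bzero\}$ throughout.

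The next step is a dyadic decomposition against both essential supports: $\hat A_n^*$ in the shells $2^u \cR_n^\vee$ via Lemma \ref{EssentialSupport}, and $\hat\mu_\eta$ in the shells $2^v \cR^\vee$ via \eqref{HyperplaneDecay}. The geometric key input is that $\cR^\vee$ is a tube of length $\asymp 1/\eta$ in the direction $\balp$ with bounded cross-section, so any $\bxi \in 2^v\cR^\vee$ satisfies $|\bxi| \ll 2^v/\eta$. Because $\bm \ne \bzero$ forces $|n\bm| \ge n$, only shells with $2^v \gg n\eta$ can contribute, which is the uncertainty-principle restriction flagged in the introduction.

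The core of the argument combines the tube constraint with the Diophantine hypothesis. Writing $\bv = \bxi - \xi_1 \balp$, the tube condition yields $|\bv| \ll 2^v$, hence $|m_i - m_1 \alp_i| \ll 2^v/n$ for $i \in \{2,\ldots,k\}$. The case $m_1 = 0$ is excluded because it forces $\bm = \bzero$ when $2^v \ll n$, so $m_1 \ne 0$; the assumption $\ome(\alp_2,\ldots,\alp_k) < \varpi$ then produces
\[
\max_{2 \le i \le k} \|m_1 \alp_i\| \gg |m_1|^{-\varpi},
\]
giving the lower bound $|m_1| \gg (n/2^v)^{1/\varpi}$. The box constraint $\bxi \in 2^u \cR_n^\vee$, together with the hypothesis $d_{j^*} \gg n^{-1-1/\varpi}$ for some $j^* \in \{1,\ldots,\ell\}$, supplies an upper bound $|m_1| \ll 2^u n^{1/\varpi}$: directly when $j^* = 1$, and otherwise by combining $|m_{j^*}| \le 2^u/(nd_{j^*})$ with $|m_{j^*} - m_1 \alp_{j^*}| \ll 2^v/n$ and $\alp_{j^*} \ne 0$. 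Once $m_1$ is fixed the remaining $m_i$ are pinned to the nearest integers of $m_1\alp_i$, so each shell admits at most $O(2^v/(n\eta))$ vectors $\bm$.

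Summing the dyadic contributions against the trivial bound $|\hat A_n^*(\bxi)| \le \lam(A_n^*)$ and the rapid decay $|\hat \mu_\eta(\bxi)| \ll_L 2^{-vL}$ yields an error of size $\ll_L \lam(A_n^*) \cdot n^{-L}$, and taking $L \ge k-1$ gives the required bound. The main obstacle, and the place where care will be needed, is the joint book-keeping across two incommensurate dyadic scales -- an axis-aligned box and an $\balp$-aligned tube -- and in particular the lattice-point count in their intersection, which must be tight enough to beat the trivial volume heuristic. This is precisely where the hypothesis on $\max\{d_j: 1 \le j \le \ell\}$ enters, pinning $m_1$ to the narrow window left by the Diophantine lower bound and thereby eliminating all but a rapidly decaying tail of frequencies.
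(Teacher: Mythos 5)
Your overall architecture (Fourier expansion, double dyadic decomposition against the axis-aligned box $2^u\cR_n^\vee$ and the $\balp$-aligned tube $2^v\cR^\vee$, then a lattice-point count driven by the Diophantine hypothesis and the box constraint) is exactly the paper's. But the decisive step is broken: you use the \emph{same} exponent $\varpi$ in the Diophantine lower bound and in the upper bound on $|m_1|$. From $\max_i\|m_1\alp_i\|\gg|m_1|^{-\varpi}$ and $\max_i\|m_1\alp_i\|\ll 2^v/n$ you get $|m_1|\gg (n2^{-v})^{1/\varpi}$, while the box gives $|m_1|\ll 2^u n^{1/\varpi}$. These two bounds are compatible for \emph{every} $u,v\ge 0$ (the inequality $(n2^{-v})^{1/\varpi}\ll 2^u n^{1/\varpi}$ reduces to $1\ll 2^{u+v/\varpi}$), so your ``window'' is not narrow: already in the innermost shell $u=v=0$ it contains $\asymp n^{1/\varpi}$ integers $m_1$, each giving a frequency at which neither $a_n$ nor $\hat\mu_\eta$ is small. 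Summed naively this contributes $\gg n^{1/\varpi}$ to the relative error, not $O(n^{1-k})$. The paper avoids this by writing $\varpi=\ome+2\del$ and applying the Diophantine estimate with the strictly smaller exponent $\ome+\del$; the mismatch between $1/(\ome+\del)$ and $1/\varpi$ then forces $2^{u(\ome+\del)}2^v\gg n^{\del/\varpi}$, i.e.\ $\max(u,v)\gg_\del\log n$, so the rapid dyadic decay $2^{-uL-vL}$ supplies the saving. Without this $\del$-gap your sum over shells does not close.

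Three further points need repair. First, your per-shell count $O(2^v/(n\eta))$ uses the length of the tube and so degenerates as $\eta\to 0$; the theorem's implied constant must be uniform in $\eta$ (this uniformity is what allows the passage to $\mu_0$ in Theorem \ref{FlatETP}), so the range of $m_1$ must come from the box constraint, $|m_1|\ll 2^u n^{1/\varpi}$, as in the paper. Second, ``the remaining $m_i$ are pinned to the nearest integers of $m_1\alp_i$'' is only true when $2^v\ll n$; in general each $m_i$ ranges over $O(1+2^v/n)$ values, and the factor $(1+2^v/n)^{k-1}$ must be carried through the summation. Third, you discard the case $m_1=0$ on the grounds that it forces $\bm=\bzero$, but that deduction only holds for $2^v\ll n$; the solutions with $m_1=0$ and $2^v\gg n$ number $O((2^v/n)^{k-1})$ per shell and must be summed separately (they are in fact what the paper bounds by $O(n^{1-k})$, taking $L>k-1$). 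Relatedly, your claimed final error $O_L(n^{-L})$ for arbitrary $L$ overshoots what your bookkeeping delivers; the statement only needs, and the corrected argument gives, $O(n^{1-k})$.
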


The implied constants above do not depend on $\eta$. Taking $\eta\to 0,$ we see that
\[
\mu_\eta^*(A_n^*)\to \mu_0(A^*_n).
\]
This gives Theorem \ref{FlatETP}, assuming Theorem \ref{FlatSmoothETP}.

\bigskip

\begin{proof}[Proof of Theorem \ref{FlatSmoothETP}]
As $\alp_2 \cdots \alp_\ell \ne 0$, we may assume without any loss of generality that 
\[
d_1 = \max \{ d_j: 1 \le j \le \ell \}.
\]
Much like in the previous subsection, 
\begin{align*}
\label{eqn: flat etp middle step}
\frac{\mu^*_\eta(A_n^*) - \lam(A_n^*)}{\lam(A_n^*)}
\ll_L \sum_{u=0}^\infty 2^{-uL}  
\sum_{\substack{\bzero \ne \bxi \in 2^u \cR_n^\vee \\ n \mid \bxi}} |\hat \mu_\eta(-\bxi)|.
\end{align*}
For $u, v \in \bZ_{\ge 0}$, define
\[
N^\flat(u,v)
= \# \{ \bzero \ne \bxi \in 2^u \cR_n^\vee \cap 2^v \cR^\vee: n \mid \bxi \}.
\]
It then follows from dyadic pigeonholing and \eqref{HyperplaneDecay} that
\[
\frac{\mu^*_\eta(A_n^*) - \lam(A_n^*)}{\lam(A_n^*)}
\ll_L \sum_{u,v = 0}^\infty
2^{-uL} 2^{-vL} N^\flat(u,v).
\]

Let us now consider in simple terms what $N^\flat(u,v)$ counts, namely non-zero integer vectors $\bxi = (\xi_1, \ldots, \xi_k)$ that are divisible by $n$ and subject to two geometric constraints. The first is that
\[
|\xi_j| \le \frac{2^u}{d_j}
\qquad (1 \le j \le k).
\]
We now turn to the second geometric constraint, that $\bxi \in 2^v \cR^\vee$. This is a tube of dimensions
\[
2^{v+1} \times \cdots \times 2^{v+1} \times \frac{2^{v+1}}{\eta}
\]
in the direction $\balp$, centred at the origin.  Therefore 
\[
\xi_1 \alp_j - \xi_j \ll 2^v
\qquad (2 \le j \le k).
\] 
We can write this as
\[
t_1 \alp_j - t_j \ll
2^v/n
\qquad (2 \le j \le k),
\]
where $\bxi = n \bt$. 

The number of solutions with $t_1 = 0$ is $O((2^v/n)^{k-1})$, and zero unless $2^v \gg n$. Moreover, if $L > k-1$ then
\[
\sum_{u,v = 0}^\infty
2^{-uL} 2^{-vL} (2^v/n)^{k-1} \ll_L n^{1-k}.
\]

Next, assume that $t_1 \ne 0$, write $\varpi = \ome + 2\del$, and observe that
\[
\frac{2^v}{n} \gg
\max_j \|t_1 \alp_j \| \gg
\frac1{t_1^{\ome + \del}}.
\]
We also have $|t_1| \le 2^u/(nd_1)$, so
\[
2^{u(\ome + \del)} 2^v \gg 
n(nd_1)^{\ome + \del} \gg n^{\del/\varpi}.
\]
The number of solutions with $t_1 \ne 0$ is at most a constant times
\begin{align*}
\frac{2^u}{nd_1} \left( 1 + \frac{2^v}{n} \right)^{k-1}
\ll 2^u n^{1/\varpi} 
\left( 1 + \frac{2^v}{n} \right)^{k-1}.
\end{align*}
Moreover, if 
$
L > k^3 \varpi^2(1+\del^{-1})
$
then
\[
\sum_{2^{u(\ome+\del)} 2^v \gg n^{\del/\varpi}} 
2^{-uL} 2^{-vL}
2^u n^{1/\varpi} 
\left( 1 + \frac{2^v}{n} \right)^{k-1} \ll n^{1-k}.
\]
We conclude that
\[
\frac{\mu^*_\eta(A_n^*) - \lam(A_n^*)}{\lam(A_n^*)}
\ll n^{1-k}.
\]
\end{proof}

\section{Variance transference principle}
\label{sec: VTP}

\subsection{Admissible set and function systems}
\label{admissible}

For $m \in \bN$, write
\[
D_m = [2^{m-1}, 2^m).
\]
For each $m$, we choose a finite index set $I_m$. In the analysis, we may assume this to be non-empty, for otherwise $m$ will not contribute to the relevant sums. Fix $\tau \in (0,1/k)$. For each $i \in I_m$ and each $j \in \{1,2,\ldots,k \}$, we choose $d_{i,j} = d_{i,j,m}$ such that 
\begin{align}
\label{eqn: tau}
\frac1{n^{1 + 1/k - \tau}} \gg
d_{i,j} \gg 
\frac1{n^{1 + 1/k + \tau}} \qquad (n \in D_m),
\end{align}
and such that $\prod_{j \le k} d_{i,j}$ does not depend on $i$. 
For each $n \in D_m$, we introduce a disjoint union
\[
A_n = \bigsqcup_{i \in I_m} A_n(d_{i,1},\ldots,d_{i,k}).
\]
Here we have associated an interval $\cU_{i,j}$ to $d_{i,j}$, as in \S \ref{sec: rectangle decomposition}. We also assume that $A_n$ has Property $\cP_n(\by)$, as defined in \S \ref{sec: rectangle decomposition}.
An \emph{admissible set system} is a sequence $(A_n)_{n=1}^\infty$ obtained in this way.

\begin{example} 
[Multiplicative approximation]
\label{MultOK}
We claim that in proving the divergence theory we may assume that
\[
\psi(n) \ge \psi_L(n) \qquad (n \in \bN),
\]
where
\[
\psi_L(n) = \frac1{n (\log n)^{k+1}} \qquad (n \ge 2).
\]
To see this, put $\tilde \psi = \max \{ \psi, \psi_L \}$ and note that
\[
W(\tilde \psi) = W(\psi) \cup W(\psi_L).
\]
Observe that \eqref{ConvDiop} follows from \eqref{FlatDivergenceCondition}.
As $W(\psi_L)$ has zero measure, by the convergence theory, the set $W(\tilde \psi)$ has the same measure as $W(\psi)$, and we may therefore replace $\psi$ by $\tilde \psi$. 

We may also assume that $\psi(n) \ll n^{-\eps}$, where $0 < \eps  \le 1$, by the argument given in Remark~\ref{refinement}. Since $\psi$ is monotonic, we have 
\[
\frac{\psi(n)}{n} \ge \frac{\psi(2^m)}{2^m}
\qquad (n \in D_m).
\]
We can then pack $A_n^\times$ with $\asymp m^{k-1}$ many rectangles $A_n(d_{i,1},\ldots,d_{i,k})$ as above, with
\[
\prod_{j \le k} d_{i,j} \asymp \frac{\psi(2^m)}{2^m}.
\]

We require more precision to ensure that $A_n$ has Property $\cP_n(\by)$, so let us now elaborate on the construction outlined in \S \ref{sec: rectangle decomposition}. We let $d_{i,j}$ range over integer powers of two between $d^-$ and $d^+$, where these are integer powers of 2 smaller than $2^{-1-m}$ such that
\[
d^- \asymp 2^{-m(1+(1+\tau)/k)},
\qquad
d^+ \asymp 2^{-m(1+(1-\tau)/k)}.
\]
We take
\[
d_{i,k} = \frac{\psi(2^m)}{2^{km}d_{i,1} \cdots d_{i,k-1}}
\]
and
\[
\cU_{i,j} =
\begin{cases}
[0,nd_{i,j}), &\text{if } j = k \text{ or } d_{i,j} = d^-, \\
[nd_{i,j}/2, nd_{i,j}), &\text{if } j<k \text{ and } d_{i,j} > d^-.
\end{cases}
\]

For the VTP --- which is needed for the divergence theory --- we can take $\eps = 1$. In this case 
\[
nd_{i,k} \ll n^{-1 +(1+\tau)(k-1)/k} \ll \frac1{n^{1/k-\tau}}
\]
and
\[
nd_{i,k} \gg \psi_L(n)n^{(1-\tau)(k-1)/k} \gg \frac1{n^{1/k+\tau}}.
\]
This furnishes \eqref{eqn: tau}, irrespective of the value of the constant $\tau \in (0,1/k)$.

To confirm Property $\cP_n(\by)$ for any $\eps \in (0,1]$ and any $\tau \in (0,1/k)$, suppose $\bx \in A_n$ for some $n \in D_m$. Then, for some $i \in I_m$,
\[
\| nx_j - y_j \| \in \cU_{i,j} \qquad (1 \le j \le k).
\]
Now suppose 
\[
\| n z_j - y_j \| \le
\| n x_j - y_j \| \qquad (1 \le j \le k).
\]
For $j=1,2,\ldots,k-1$, we then have
\[
\| n z_j - y_j \| \in \cU_j,
\]
where $\cU_j$ is associated to $d_j$, where $d_j$ is the least integer power of two such that
\[
d_j \ge d^-, \qquad
\| n z_j - y_j \| < d_j.
\]
The final scale is
\[
d_k = 
\frac{\psi(2^m)}{2^{km} d_1 \cdots d_{k-1}} \ge
\frac{\psi(2^m)}
{2^{km} d_{i,1} \cdots d_{i,k-1}} = d_{i,k},
\]
so indeed $\bz \in A_n(\bd) \subseteq A_n$.
\end{example}

\bigskip

We now discuss the functional analogue. For $j=1,2$, let $\ome_j: \bR \to [0,1]$ be a non-zero bump function such that $\ome_1$ is supported on $\{x:|x| < 1\}$ and $\ome_2$ is supported on $\{x: 1/2 \le |x| < 1\}$. Let $(A_n)_{n=1}^\infty$ be an admissible set system. 

Suppose $n \in D_m$. For $i \in I_m$ and $j=1,2,\ldots,k$, let 
\[
b_{i,j} = \begin{cases}
\ome_1, &\text{if } \cU_{i,j} = [0,d_{i,j}), \\
\ome_2, &\text{if }
\cU_{i,j} =
[d_{i,j}/2, d_{i,j}).
\end{cases}
\]
Furthermore, let
\[
f_n(\bx) = \sum_{i \in I_m}
A_n^*(\bd^{(i)}; \bx),
\]
where 
\[
\bd^{(i)} = (d_{i,1}, \ldots, d_{i,k}),
\]
and $A_n^*(\bd; \bx)$ is given by \eqref{AnStar}. 

An \emph{admissible function system} is a sequence $(f_n)_{n=1}^\infty$ obtained in this way.

\subsection{Curved VTP}

This subsection is devoted to establishing the following theorem.

\begin{thm}
\label{CurvedVTP}
Let $(f_n)$ be an admissible function system with sufficiently small parameter $\tau$, and let $\mu$ be a probability measure on $[0,1]^k$ such that $\hat \mu(\bxi) \ll (1 + |\bxi|)^{-2}$, where $k \ge 3$. Then VTP holds for $(f_n)_n$ and $\mu$.
\end{thm}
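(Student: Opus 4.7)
The plan is to estimate $V_N(\mu) - V_N(\lam)$ by a double Fourier expansion, extending the strategy used in the proof of Theorem \ref{CurvedETP}. Writing $F_N = \sum_{n \le N} f_n$, a direct expansion yields
\[
V_N(\mu) - V_N(\lam) = \int F_N^2 \, \d(\mu - \lam) - 2 E_N(\lam) \bigl( E_N(\mu) - E_N(\lam) \bigr).
\]
The second term is absorbed into $o(E_N(\mu)^2) + O(1)$ by the ETP (Theorem \ref{CurvedETP}). By Parseval, the first term equals
\[
\sum_{\substack{\bxi, \bzeta \in \bZ^k \setminus \{\bzero\} \\ \bxi + \bzeta \ne \bzero}} \hat F_N(\bxi) \, \hat F_N(\bzeta) \, \hat\mu(-\bxi - \bzeta),
\]
which is the central quantity to control.

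From the ETP analysis, each $\hat f_n(\bxi)$ is supported on $n\bZ^k$ with $|\hat f_n(\bxi)| \le \lam(f_n) \cdot 1_{n \mid \bxi}$, and decays like $2^{-uL} \lam(f_n)$ on $2^u\cR_n^\vee$; the hypothesis $|\hat\mu(\bxi)| \ll (1 + |\bxi|)^{-2}$ supplies the key cancellation. After localizing $\bxi \in \cR_m^\vee$ and $\bzeta \in \cR_n^\vee$ (dyadic tails contribute a negligible $O(1)$ upon choosing $L$ large), the main contribution is dominated by
\[
\sum_{m, n \le N} \lam(f_m) \lam(f_n) \sum_{\substack{\bzero \ne \bxi \in m\bZ^k \cap \cR_m^\vee \\ \bzero \ne \bzeta \in n\bZ^k \cap \cR_n^\vee \\ \bxi + \bzeta \ne \bzero}} (1 + |\bxi + \bzeta|)^{-2}.
\]
For each fixed pair $(m, n)$, I reparameterise the inner sum by $\bgam = \bxi + \bzeta$. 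By the Chinese remainder theorem, the joint divisibility $m \mid \bxi$, $n \mid \bzeta$ forces $\gcd(m, n) \mid \bgam$ and constrains $\bxi$ to lie on a coset of $\lcm(m, n)\bZ^k$ inside $\cR_m^\vee \cap (\bgam + \cR_n^\vee)$. The admissibility condition \eqref{eqn: tau} with $\tau$ small keeps the rectangles near-cubic in frequency space, making the count of valid $\bxi$ per $\bgam$ manageable. Weighting by $(1 + |\bgam|)^{-2}$ and summing over admissible $\bgam$ uses the dimension hypothesis $k \ge 3$ for absolute convergence with a quantitative saving.

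The main obstacle is the \emph{resonant regime}, where $m$ and $n$ share many common factors, so that $\lcm(m,n)/\gcd(m,n)$ is small and the sublattice of valid $\bxi$ is dense; here, the Fourier decay of $\hat\mu$ is what rescues the estimate, with $k \ge 3$ needed to close the lattice sum $\sum_{\bgam} |\bgam|^{-2}$ convergently. The smallness of $\tau$ is essential for preserving the near-cube shape of the rectangles so the dimensional counting is not degraded by eccentricity. Assembling the estimates over $m, n \le N$, one obtains $V_N(\mu) - V_N(\lam) = o(E_N(\lam)^2) + O(1)$, and via the ETP this equals $o(E_N(\mu)^2) + O(1)$, giving the VTP along a dyadic sequence of $N \to \infty$.
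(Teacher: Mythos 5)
Your reduction to the exponential-sum estimate
\[
E=\sum_{n,n'\le N}\ \sum_{\substack{\bxi,\bxi'\ne\bzero\\ \bxi+\bxi'\ne\bzero}}\hat f_n(\bxi)\hat f_{n'}(\bxi')\hat\mu(\bxi+\bxi')
\]
matches the paper's opening move, but the core counting step is where the proposal breaks down. You propose to fix the pair $(n,n')$, reparameterise by $\bgam=\bxi+\bxi'$, count the $\bxi$ on a coset of $\lcm(n,n')\bZ^k$ inside the relevant box, and then sum $(1+|\bgam|)^{-2}$ over $\bgam$. This per-pair strategy cannot succeed. First, the lattice sum $\sum_{\bzero\ne\bgam}|\bgam|^{-2}$ \emph{diverges} for $k\ge 2$ (it grows like $R^{k-2}$ out to radius $R$), so the role you assign to $k\ge3$ is the opposite of the truth: larger $k$ makes this sum worse, not convergent. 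Second, and more seriously, there exist pairs for which the inner sum is genuinely large. Take $n'=n+1$ and $\bxi=-n\bx$, $\bxi'=(n+1)\bx$ for any $\bx$ with $|x_j|\ll n^{1/k}$: these are admissible frequencies with $\bxi+\bxi'=\bx$, and the normalised inner sum is $\gg\sum_{0<|\bx|\ll n^{1/k}}|\bx|^{-2}\asymp n^{(k-2)/k}$. A bound of that strength, applied uniformly to all $\asymp N^2$ pairs as your "assemble the estimates over $m,n\le N$" step requires, is far larger than $E_N(\lam)^2$. The resonances are rare in $(n,n')$, but detecting that rarity is precisely the content of the proof, and it is invisible once you have fixed the pair.

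The paper's proof avoids this by counting the quadruples $(n,n',\bxi,\bxi')$ \emph{jointly}: the sum over $n\in D_m$, $n'\in D_{m'}$ sits inside the count $N(m,m',u,u',v;i,i')$ of solutions to $T\bn=\bx$, where $\bxi=n\bt$, $\bxi'=n'\bt'$ and $\bx=\bxi+\bxi'$ is small. The decisive dichotomy is whether $\bt$ and $\bt'$ are parallel. In the non-parallel case two independent rows of $T$ pin down $(n,n')$ via the divisor bound; in the parallel (resonant) case a gcd/congruence analysis of $t_1n+t_1'n'=x_1$ controls how often the resonance can occur as $n,n'$ vary. Neither mechanism is present in your sketch, and the CRT coset count does not substitute for them: it only tells you how many $\bxi$ correspond to each achievable $\bgam$, not how the achievable $\bgam$ are distributed near the origin, which is the actual difficulty. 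To repair the argument you would need to reinstate the averaging over $(n,n')$ and the parallel/non-parallel case analysis, at which point you have reconstructed the paper's proof.
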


We assume that $\hat \mu(\bxi) \ll (1 + |\bxi|)^{-\sig}$. Later, we will put $\sigma=2.$ We begin with the Fourier series expansions
\begin{align*}
V_N(\lam) &=
\int \left(
\sum_{n \le N} (f_n(\bx) - \lam(f_n))
\right)^2 \d \lam(\bx) \\
&= \int \left(
\sum_{n \le N} \sum_{\bxi \ne \bzero} \hat f_n(\bxi) e(\bxi \cdot \bx) \right)^2 \d \lam(\bx) \\
&= \sum_{n, n' \le N} 
\sum_{\substack{\bxi, \bxi' \ne \bzero \\ \bxi + \bxi' = \bzero}}
\hat f_n(\bxi)
\hat f_{n'}(\bxi')
\end{align*}
and
\begin{align*}
V_N(\mu) &=
\int \left(
\sum_{n \le N} (f_n(\bx) - \lam(f_n))
\right)^2 \d \mu(\bx) \\
&= \int \left(
\sum_{n \le N} \sum_{\bxi \ne \bzero} \hat f_n(\bxi) e(\bxi \cdot \bx) \right)^2 \d \mu(\bx) \\
&= \sum_{n, n' \le N} \left(
\sum_{\substack{\bxi, \bxi' \ne \bzero \\ \bxi + \bxi' = \bzero}}
\hat f_n(\bxi)
\hat f_{n'}(\bxi') + \sum_{\bxi, \bxi', \bxi + \bxi' \ne \bzero} 
\hat f_n(\bxi) \hat f_{n'}(\bxi') \hat \mu(\bxi + \bxi') \right) \\
&= V_N(\lam) + E,
\end{align*}
where 
\[
E = \sum_{n, n' \le N}
\sum_{\bxi, \bxi', \bxi + \bxi' \ne \bzero} 
\hat f_n(\bxi) \hat f_{n'}(\bxi') \hat \mu(\bxi + \bxi').
\]
Note in the above manipulations that all of the Fourier series converge absolutely, since each $A_n^\dag(\bd; \cdot)$ is Schwartz and therefore so too is the Fourier transform.

To bound the error term, we dyadically pigeonhole $n$ and $n'$, 
\[
|E| \le \sum_{m,m' \le \frac{\log N}{\log 2}} \sum_{\substack{n \in D_m \\ n' \in D_{m'}}} \sum_{\bxi, \bxi', \bxi + \bxi' \ne \bzero} |\hat f_n(\bxi) \hat f_{n'}(\bxi') \hat \mu(\bxi + \bxi')|.
\]
To normalise our Fourier coefficients, we divide by
\begin{equation} \label{normalisation}
\lam(f_n) = \sum_{i \in I_m}
n^k d_{i,1} \cdots d_{i,k} \prod_{j \le k} \| b_j \|_1 \asymp 2^{km} \sum_{i \in I_m} d_{i,1} \cdots d_{i,k} =: w_m.
\end{equation}
We obtain
\[
E \ll \sum_{m,m'} w_m w_{m'} \sum_{n,n'} 
\sum_{\bxi, \bxi', \bxi + \bxi' \ne \bzero} |a_n(\bxi) a_{n'}(\bxi)\hat \mu(\bxi + \bxi')|,
\]
where
\[
a_n(\bxi) = 
\frac{\hat f_n(\bxi)}{\lam(f_n)},
\qquad
|a_n(\bxi)| \le 1.
\]
We also write
\[
a_n(\bxi) = \sum_{i \in I_m} a_{n,i}(\bxi),
\]
where
\begin{equation}
\label{ani}
\lam(f_n) a_{n,i} = 
\hat A_n^*(\bd^{(i)}(n); \bx)
\end{equation}
is the Fourier transform of $\bx \mapsto A_n^*(\bd^{(i)}(n); \bx)$.
As $\prod_{j \le k} d_{i,j}$ does not depend on $i$, we see that
\[
a_{n,i}(\bxi) \ll 
\frac{2^{km} \prod_{j \le k} d_{i,j}}{w_m} \ll
\frac1{\# I_m}.
\]

Fixing $m, m' \ll \log N$ for the time being, define
\[
S(m, m') = \sum_{\substack{n \in D_m \\ n' \in D_{m'}}}  \sum_{\bxi, \bxi', \bxi + \bxi' \ne \bzero}
|a_n(\bxi) a_{n'}(\bxi)\hat \mu(\bxi + \bxi')|.
\]
Then
\[
E \ll \sum_{m,m'} w_m w_{m'} S(m,m')
\]
and
\[
S(m,m') \le
\sum_{\substack{n \in D_m \\ n' \in D_{m'}}} \sum_{\substack{i \in I_m \\ i' \in I_{m'}}}
\sum_{\bxi, \bxi', \bxi + \bxi' \ne \bzero} |a_{n,i}(\bxi) a_{n', i'}(\bxi') \hat \mu(\bxi + \bxi')|.
\]

As $\bx \mapsto A_n^*(\bd^{(i)}; \bx)$ is $n^{-1}$-periodic, its normalised Fourier coefficients $a_{n,i}$ vanish away from $n \bZ^k$. Moreover, by the uncertainty principle, they are essentially supported on
\[
\cR_{n,i}^\vee := 
[-1/d_{i,1},
1/d_{i,1}] \times \cdots \times
[-1/d_{i,k},
1/d_{i,k}].
\]
Indeed, we have the following estimate.

\begin{lem} \label{EssentialSupport2}
Let $n \in D_m$ and $i \in I_m$. 
Let $L, t \ge 1$ and $\bxi \in \bZ^k \setminus t \cR_{n,i}^\vee$. Then
\[
a_{n,i}(\bxi) \ll_L t^{-L} (\# I_m)^{-1}.
\]
\end{lem}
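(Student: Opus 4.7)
The plan is to mimic the proof of Lemma~\ref{EssentialSupport}, keeping track of the fact that the normalisation $\lam(f_n)$ in the definition \eqref{ani} of $a_{n,i}$ sums over all $i\in I_m$, while the numerator $\hat A_n^*(\bd^{(i)};\cdot)$ involves only a single rectangle $\cR_{n,i}$. The ratio of these two quantities will be what produces the extra saving of $(\# I_m)^{-1}$.

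First, I would reproduce the identity \eqref{agreement} in this setting, noting that $\bx \mapsto A_n^*(\bd^{(i)};\bx)$ is the $\bZ^k$-periodisation of $A_n^\dag(\bd^{(i)};\cdot)$, so that for $\bxi \in \bZ^k$ its Fourier coefficient equals $\hat A_n^\dag(\bd^{(i)};\bxi)$. Expanding out the definition of $A_n^\dag$, we obtain
\[
\hat A_n^*(\bd^{(i)};\bxi)
= \sum_{\ba \in \{1,\ldots,n\}^k} e\!\left(-\langle \bxi, \ba + \by\rangle/n\right) \hat\bb_{\cR_{n,i}}(\bxi),
\]
where $\cR_{n,i} = [-d_{i,1}, d_{i,1}] \times \cdots \times [-d_{i,k}, d_{i,k}]$. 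The exponential factors have modulus one, so the sum over $\ba$ contributes at most $n^k$.

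Next, I would apply the rectangular Fourier decay estimate \eqref{decay2} to $\hat\bb_{\cR_{n,i}}$, which is precisely designed for the rectangle $\cR_{n,i}$ with dual $\cR_{n,i}^\vee$. For $\bxi \in \bZ^k \setminus t\cR_{n,i}^\vee$, a dyadic decomposition of the region $\{t\cR_{n,i}^\vee \subseteq \cdot\}$ into shells $2^m \cR_{n,i}^\vee \setminus 2^{m-1} \cR_{n,i}^\vee$ (with $2^m \ge t$) together with \eqref{decay2} gives
\[
\hat\bb_{\cR_{n,i}}(\bxi) \ll_L d_{i,1} \cdots d_{i,k} \, t^{-L}.
\]
Combining the two estimates,
\[
|\hat A_n^*(\bd^{(i)};\bxi)|
\ll_L n^k \, d_{i,1} \cdots d_{i,k} \, t^{-L}.
\]

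Finally, I would divide by $\lam(f_n)$. Using the normalisation \eqref{normalisation} and the fact that $\prod_{j\le k} d_{i,j}$ is independent of $i$, for $n \in D_m$ we have
\[
\lam(f_n) \asymp w_m \asymp \# I_m \cdot n^k \, d_{i,1} \cdots d_{i,k}.
\]
Dividing therefore yields
\[
a_{n,i}(\bxi) = \frac{\hat A_n^*(\bd^{(i)};\bxi)}{\lam(f_n)} \ll_L \frac{t^{-L}}{\# I_m},
\]
as required. No serious obstacle is anticipated; the only point requiring a little care is the observation that the denominator $\lam(f_n)$ is a sum of $\# I_m$ terms each comparable to the single term $n^k d_{i,1}\cdots d_{i,k}$ appearing in the numerator's bound, which is exactly where the factor $(\# I_m)^{-1}$ comes from.
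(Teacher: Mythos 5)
Your proposal is correct and follows essentially the same route as the paper: expand $\hat A_n^*(\bd^{(i)};\bxi)$ via the periodisation identity \eqref{agreement}, bound the exponential sum trivially by $n^k$ and the bump factor by $d_{i,1}\cdots d_{i,k}\,t^{-L}$ using \eqref{decay2}, then divide by $\lam(f_n)\asymp \#I_m\cdot n^k d_{i,1}\cdots d_{i,k}$, which is where the $(\#I_m)^{-1}$ saving comes from. This matches the paper's proof step for step.
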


\begin{proof} Write
\[
\cR_{n,i} = [-d_{i,1}, d_{i,1}] \times \cdots \times 
[-d_{i,k}, d_{i,k}].
\]
Using \eqref{ani}, then \eqref{agreement}, then \eqref{decay2}, we compute that
\begin{align*}
\lam(f_n)a_{n,i}(\bxi) &=
\sum_{\ba \in \{1,2,\ldots,n\}^k} e \left( - \left \langle \bxi, 
\frac{\ba + \by}n
\right \rangle \right ) \hat b_{\cR_{n,i}} (\bxi) \\ &\ll n^k d_{i,1}
\cdots d_{i,k} t^{-L}
\ll \frac{\lam(f_n)}{\# I_m} t^{-L}.
\end{align*}
\end{proof}

For $m, m', u, u', v \in \bN$, define
\[
N(m,m',u,u',v; i, i') = 
\sum_{\substack{n \in D_m \\ n' \in D_{m'}}} \sum_{\substack{i \in I_m \\ i' \in I_{m'}}} \sum_{\substack{0 < |\bxi + \bxi'| \le 2^v \\ n \mid \bxi \in 2^u \cR_{n,i}^\vee \setminus \{ \bzero \}
\\ n' \mid \bxi' \in 2^{u'} \cR^\vee_{n',i'} 
\setminus \{ \bzero \}
}} 1.
\]
Then
\[
S(m,m') \ll_{L,\sig} \sum_{\substack{i \in I_m \\ i' \in I_{m'}}} 
\sum_{u,u'= 1}^\infty
\sum_{v \le u + u' + 2(m+m')}
\frac{N(m,m',u,u',v; i, i')}
{2^{uL + u'L + v \sig} \#I_m
\#I_{m'}}.
\]
Write
\begin{equation}
\label{Tnx}
T = \begin{pmatrix}
t_1 & t_1' \\
t_2 & t_2' \\
\vdots & \vdots \\
t_k & t_k'
\end{pmatrix},
\qquad
\bn = \begin{pmatrix} n \\ n' \end{pmatrix},
\qquad 
\bx = 
\begin{pmatrix} x_1 \\ x_2 \\ \vdots \\ x_k
\end{pmatrix}.
\end{equation}
The quantity $N(m,m',u,u',v; i, i')$ counts integer solutions to $T \bn = \bx$ such that
\begin{align*}
&n \in D_m, \qquad
n' \in D_{m'}, \qquad
\bt, \bt', \bx \ne \bzero, \\
&|t_j n| \le 2^{u}/d_{i,j,m},
\quad
|t_j' n'| \le 2^{u'}/d_{i',j,m'}
\qquad (1 \le j \le k), \\
&|x_1|, \ldots, |x_k| \le 2^v.
\end{align*}

Put
\begin{equation}
\label{fmm'}
f(m,m') = \frac{2^{-k(m+m')}}{d_{i,1,m} \cdots d_{i,k,m} d_{i',1,m'} \cdots d_{i',k,m'}} = \frac{\# I_m \# I_{m'}}{w_m w_{m'}}.
\end{equation}
Recalling that $d_{i,j} \ll 1/n$, we find it helpful to note the trivial estimate
\[
N(m,m',u,u',v; i, i') \ll
2^{k(u+u'+v)+m+m'} f(m,m')
\]
coming solely from the constraints on the variables. 
We write 
\[
N(m,m',u,u',v; i, i') = N_1 + N_2,
\]
where $N_1$ counts solutions for which $\bt$ and $\bt'$ are parallel, and  $N_2$ counts solutions with $\bt, \bt'$ non-parallel.

\bigskip

In the case of $N_2$, the matrix $T$ contains a pair of linearly independent rows, and plainly it suffices to count solutions where the first two rows are independent. 
By symmetry, we may assume that $m \ge m'$. Since $(t_1, t_2) \ne (0,0)$, we may also assume that $t_1 \ne 0$.

We choose $t_1, t_2, t_1', x_1, x_2$. Then, since
\[
t_1 n + t_1' n' = x_1,
\qquad
t_2 n + t_2' n' = x_2,
\]
we have
\[
(t_2 t_1' - t_1 t_2')n'
= t_2 x_1 - t_1 x_2.
\]
As $(t_2 t_1' - t_1 t_2')n' \ne 0$, we must have $t_2 x_1 - t_1 x_2\neq 0$, so there are at most $O(2^{(m+m'+u+u')\eps})$ many possibilities for $n'$, by the divisor bound. Choosing $n'$ then determines at most one possibility for $n$ and $t_2'$, and choosing $t_3, \ldots, t_k, t_3', \ldots, t_k'$ then determines the remaining variables. Thus, by (\ref{eqn: tau}),
\begin{align*}
N_2 &\ll
2^{(m+m'+u+u')\eps + k(u+u') + 2v - (1-k\tau)(m+m')/(2k)} f(m,m').
\end{align*}

\bigskip

In the case of $N_1$, the vectors $\bt, \bt', \bx$ are all parallel. By symmetry, we may assume that $x_1 \ne 0$. Then $t_1 t_1' \ne 0$, since $\bt, \bt'$ are non-zero multiples of $\bx$.
Put
\begin{equation}
\label{rjdef}
r_j = \frac{2^{u+2-m}}{d_{i,j,m}} + 1,
\quad
r_j'= \frac{2^{u'+2-m'}}{d_{i',j,m'}} + 1
\qquad (1 \le j \le k).
\end{equation}
Then we may assume that
\begin{equation}
\label{WeirdSym}
2^m r_1 \le 2^{m'} r'_1.
\end{equation}

We write $t_1 = ds_1$ and $t_1' = ds_1'$ with $(s_1, s_1') = 1$. After choosing $t_1, t_1', x_1$, the congruence
\[
d s_1 n \equiv x_1 \mmod d|s_1'|
\]
determines at most one residue class for $n$ modulo $|s_1'|$, and after choosing $n$ there is at most one value of $n'$ such that $t_1 n + t_1' n' = x_1$. Next, we choose $x_2$, thereby determining 
\[
t_2 = \frac{t_1 x_2}{x_1}, \qquad
t_2' = \frac{t_2' x_2}{x_1}.
\]
Finally, for each $j \ge 3$, choosing $t_j$ or $t_j'$ determines the other as well as $x_j$. Hence, 
\begin{align*}
N_1 &\le
2^{2v} \sum_{d \le r_1} \frac{r_1}{d} \sum_{1 \le |s_1'| \le r_1'/d} \left( \frac{2^m}{|s_1'|} + 1 \right) \prod_{j=3}^k \sqrt{r_j r_j'} \\
&\ll 2^{(m+m'+u+u')\eps +2v} (2^m + r_1')r_1 \prod_{j=3}^k \sqrt{r_j r_j'} \\
&\ll 2^{(m+m'+u+u')\eps +2v + k(u+u')}
\frac{(2^m + r_1')r_1}{\sqrt{r_1 r_1' r_2 r_2'}} \sqrt {f(m,m')} \\
&\ll 2^{(m+m'+u+u')\eps +2v + (k-1/2)(u+u') - (1-k\tau)(m+m')/(2k)}
\frac{(2^m + r_1')r_1}{\sqrt{r_1 r_1'}} \sqrt {f(m,m')}.
\end{align*}

Note that
\[
\sqrt{r_1 r_1'} \ll 2^{(u+u')/2+(1+k\tau)(m+m')/(2k)} 
\]
and, by \eqref{WeirdSym},
\[
\frac{2^m r_1}{\sqrt{r_1 r_1'}} \le 2^{(m+m')/2}.
\]
Therefore
\[
N_1 \ll
2^{(m+m'+u+u')\eps +2v + k(u+u') + (m+m')/2 - (1-k\tau)(m+m')/(2k)} \sqrt{f(m,m')}.
\]
Since
$
f(m,m') \gg 2^{(1-k\tau)(m+m')},
$
we obtain
\[
N_1 \ll
2^{(m+m'+u+u')\eps +2v + k(u+u') + (O(\tau)-1)(m+m')/(2k)} f(m,m').
\]

\bigskip

Choosing $L$ large and $\sig = 2$, and combining our estimates for $N_1$ and $N_2$ gives
\begin{align*}
E &\ll \sum_{m,m' \le \frac{\log N}{\log 2}} 
\sum_{\substack{i \in I_m \\ i' \in I_{m'}}}
\sum_{u,u'=1}^\infty 
\sum_{v \le u + u' + 2(m + m')}
\frac{N(m,m',u,u',v; i, i')}{2^{uL+u'L+v\sig}
f(m,m')} \\
&\ll \sum_{m,m' \le \frac{\log N}{\log 2}}
\frac{(m+m') \# I_m \# I_{m'}}{2^{(1-O(\tau))(m+m')/(2k)}}.
\end{align*}
As
\[
E_N(\lam) \asymp \sum_{m \le \frac{\log N}{\log 2}} 2^m w_m
\]
whenever $N$ is a power of two, it now suffices to prove that
\[
\frac{m \# I_m}{2^{(1-O(\tau))m/(2k)}} = o(2^m w_m),
\]
whenever $\# I_m \ge 1$.
Note from \eqref{eqn: tau} and \eqref{normalisation}
that
\begin{equation} \label{wbound}
\frac{2^m w_m}{\#I_m} \gg 2^{-k\tau m}.
\end{equation}
Since $\tau$ is small, this completes the proof of Theorem \ref{CurvedVTP}.

\subsection{Flat VTP}

The purpose of this subsection is to establish the following theorem.

\begin{thm} [Flat VTP]
\label{FlatVTP}
Let $\cH$ be an affine hyperplane in $\bR^k$, and let $\mu_0$ be as in \S \ref{FlatETPsection}.
Write 
\[
\balp = (1,\alp_2,\ldots,
\alp_k)
\]
for the normal direction of $\cH$, and suppose
$
k > 2\ome_2(\cH) + 4,
$
where
\[
\ome_2(\cH) = \max_{2 \le i < j \le k}
\ome^*(\alp_i, \alp_j).
\]
Let $(f_n)$ be an admissible function system, where $\tau$ is small in terms of $\cH$.
Then VTP holds for $(f_n)_n$ and $\mu_0$.
\end{thm}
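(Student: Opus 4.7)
The plan is to mirror the argument for Theorem~\ref{CurvedVTP}, replacing the pointwise Fourier decay of $\mu$ by the essential-support bound \eqref{HyperplaneDecay} on $\hat\mu_\eta$. First, I would establish VTP with $\mu_\eta^*$ in place of $\mu_0$, with bounds uniform in $\eta>0$, and then pass to $\mu_0$ by weak convergence as $\eta\to 0$.

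By Parseval, $V_N(\mu_\eta^*) = V_N(\lam) + E$, where
\[
E = \sum_{n,n' \le N}\ \sum_{\substack{\bxi,\bxi' \\ \bxi+\bxi' \ne \bzero}} \hat f_n(\bxi)\,\hat f_{n'}(\bxi')\,\hat \mu_\eta(\bxi+\bxi'),
\]
and dyadic pigeonholing in $n,n',\bxi,\bxi'$ by scales $m,m',u,u'$ proceeds exactly as in the curved case. The crucial change is that instead of a decay factor $(1+|\bxi+\bxi'|)^{-\sig}$, the bound \eqref{HyperplaneDecay} restricts $\bxi+\bxi'$ to a dyadic dilate $2^v\cR^\vee$ of the tube along $\balp$ at a cost of $2^{-vL}$ for any $L\ge 1$. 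Consequently the counting function $N(m,m',u,u',v;i,i')$ now counts integer solutions of $T\bn=\bx$ (notation \eqref{Tnx}) subject to $n\in D_m$, $n'\in D_{m'}$, the same box constraints on $\bt,\bt'$ as in the curved case, and a new \emph{tube} constraint on $\bx$ enforcing $|x_1\alp_j-x_j|\ll 2^v$ for each $j\ge 2$ together with $|x_1|\ll 2^v/\eta$.

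As before, I would split $N=N_1+N_2$ according to whether the rows $\bt,\bt'$ of $T$ are parallel or independent. In the non-parallel case, the Cram\'er-style equation $t_2 x_1 - t_1 x_2 = (t_2 t_1' - t_1 t_2')\,n'$ localises $n'$ via the divisor bound, exactly as in the curved proof. The main new input is the $\bx$-count under the tube constraint: choosing a pair $2\le i<j\le k$ with $\ome^*(\alp_i,\alp_j)=\ome_2(\cH)$, the tube forces $\max(\|x_1\alp_i\|,\|x_1\alp_j\|)\ll 2^v/|x_1|$ when $x_1\ne 0$, so the number of admissible $x_1$ of magnitude at most $R$ is controlled by the pairwise dual exponent, while each of the remaining $k-2$ coordinates of $\bx$ contributes only $O(2^v)$ choices (since $x_\ell$ is forced to within $O(2^v)$ of $x_1\alp_\ell$). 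The parallel case $N_1$ is handled as in the curved argument, with the tube merely restricting the admissible range of $\bx$.

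The main obstacle is to verify that the Diophantine threshold $k>2\ome_2(\cH)+4$ is exactly what is needed for the tube savings in the $\bx$-count to dominate the $2^{2v}$ loss from the two Cram\'er variables in the non-parallel case, together with the $2^{k(u+u')}$ loss coming from the box geometry on $\bt,\bt'$; the extra $+4$ absorbs a $2v$-factor (two free coordinates of $\bx$) and a $2$-factor from Cauchy--Schwarz in the $(x_1,x_2)$ selection. Once this combinatorial accounting is carried out, summing against $2^{-(u+u')L-vL'}$ with $L,L'$ large enough produces $E=o(E_N(\lam)^2)+O(1)$ along a subsequence $N=2^M\to\infty$, and uniformity in $\eta$ combined with weak convergence $\mu_\eta^*\to\mu_0$ concludes the proof.
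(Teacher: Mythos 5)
Your overall skeleton matches the paper's: smooth to $\mu_\eta$, expand by Parseval, pigeonhole dyadically in $m,m',u,u',v$, replace pointwise decay of $\hat\mu$ by the essential-support bound \eqref{HyperplaneDecay} so that $\bxi+\bxi'$ is confined to the tube $2^v\cR^\vee$, split the count into parallel and non-parallel $\bt,\bt'$, and pass to $\mu_0$ by weak convergence uniformly in $\eta$. However, the core counting step --- the one place where the hypotheses $\ome^*(\alp_i,\alp_j)<\infty$ and $k>2\ome_2(\cH)+4$ must enter --- is wrong as you describe it. The tube constraint reads $|x_j-\alp_j x_1|\le C2^v$ for $2\le j\le k$ with $2^v\ge 1$, so for integer $\bx$ it imposes \emph{no} condition of the form $\max(\|x_1\alp_i\|,\|x_1\alp_j\|)\ll 2^v/|x_1|$; that inequality simply does not follow, and the number of admissible $x_1$ is not controlled by any exponent of $(\alp_i,\alp_j)$. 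Even if it were, the condition you write is a \emph{simultaneous} approximation condition on the pair, governed by $\ome(\alp_i,\alp_j)$, whereas the theorem's hypothesis is on the \emph{dual} exponent $\ome^*(\alp_i,\alp_j)$, which concerns small values of integer linear forms $n_1\alp_i+n_2\alp_j$. In the paper the dual exponent enters through the $\bt$-variables, not the $\bx$-variables: from $t_jn+t_j'n'=x_j$ and the tube condition one gets $M(n,n')^{T}=(O(2^v),O(2^v))^{T}$ with $M$ the $2\times 2$ matrix with entries $t_j-\alp_jt_1$, $t_j'-\alp_jt_1'$ ($j=2,3$); its determinant is an integer combination $D_1-\alp_2D_2-\alp_3D_3$, so $\ome^*(\alp_2,\alp_3)<\infty$ gives $|\det M|\gg(\max|t|)^{-\ome^*-o(1)}$, and the lattice-point count $|M^{-1}\cB\cap\bZ^2|\ll 2^{2v}(1+\sig_1)(1+\sig_2)$ with $\sig_1\sig_2=|\det M|^{-1}$ is what produces the factor $(r_ir_j')^{\ome_2(\cH)+o(1)}$ and hence the threshold $k>2\ome_2(\cH)+4$. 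Your Cram\'er-plus-divisor-bound route from the curved case also runs into a uniformity problem: there you must \emph{choose} $x_1$, but the tube is of length $\asymp 2^v/\eta$ in the $\balp$ direction, so the range of $x_1$ blows up as $\eta\to 0$; the paper avoids this by determining $x_1,x_2,x_3$ from the $\bt$'s and $(n,n')$ rather than selecting them.

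The same uniformity issue undermines your treatment of the parallel case. Writing $\bt=a\bt''$, $\bt'=a'\bt''$, one has $\bx=r\bt''$ with $r$ ranging over multiples of $\bt''$ lying in the tube $\cT$; without a Diophantine input this count is $O(2^v/\eta)$ in the worst direction, which is not uniform in $\eta$. The paper bounds the number of such $r$ by $2^v(t_1'')^{\ome_1(\cH)+o(1)}$ using $\ome_1(\cH)=\ome(\alp_2,\ldots,\alp_k)<\infty$ (which follows from $\ome_2(\cH)<\infty$), and only then applies the divisor bound to $(a,n)$ or $(a',n')$. So "the tube merely restricting the admissible range of $\bx$" is not enough. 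To repair the proposal you need (i) the determinant/singular-value argument for $N_2$, which is where $\ome^*$ genuinely lives, and (ii) a Diophantine bound on the number of lattice multiples of $\bt''$ in the tube for $N_1$.
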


We will deduce Theorem \ref{FlatVTP} from the following deformation. 

\begin{thm}
\label{SmoothFlatVTP}
Let $k \ge 3$ be an integer, let $\cH$ be an affine hyperplane in $\bR^k$, let $(f_n)$ be an admissible function system, and let $\mu_\eta$ be as in \S \ref{FlatETPsection}.
Write 
\[
\balp = (1,\alp_2,\ldots,
\alp_k)
\]
for the normal direction of $\cH$, and assume that
\[
\ome_2(\cH) := \max_{2 \le i < j \le k}
\ome^*(\alp_i, \alp_j) < \infty.
\]
Then
\begin{align}
\label{eqn: FlatSmoothVTP}
|V_N(\mu_\eta) - V_N(\lam)|
\ll \sum_{n,n' \le N} \lam(f_n) \lam(f_{n'}) 
u(n, n'),
\end{align}
where
$$
u(n,n') =
(nn')^{(2\ome_2(\cH) + 4 - k + O(\tau) + o(1))/(2k)}.
$$
The implied constant above does not depend on $\eta$, and similarly the implied convergence is uniform in $\eta$.
\end{thm}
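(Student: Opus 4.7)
My plan is to parallel the curved VTP argument of \S\ref{sec: VTP}, with the spherical decay of $\hat\mu$ replaced by the tubular essential support of $\hat\mu_\eta$. I would Fourier-expand
\[
V_N(\mu_\eta) - V_N(\lam) = E := \sum_{n, n' \le N} \sum_{\bxi, \bxi', \bxi + \bxi' \ne \bzero} \hat f_n(\bxi) \, \hat f_{n'}(\bxi') \, \hat \mu_\eta(\bxi + \bxi'),
\]
normalise via \eqref{normalisation}, and dyadically decompose in the scales $m, m', u, u', v, i, i'$ exactly as in \S\ref{sec: VTP}. Lemma~\ref{EssentialSupport2} gives rapid decay in $u, u'$, while \eqref{HyperplaneDecay} gives rapid decay in $v$. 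What remains is to bound a count $N^\flat(m, m', u, u', v; i, i')$ of integer pairs $(\bxi, \bxi')$ satisfying $n \mid \bxi \in 2^u \cR^\vee_{n,i} \setminus \{\bzero\}$, $n' \mid \bxi' \in 2^{u'} \cR^\vee_{n',i'} \setminus \{\bzero\}$ and $\bzeta := \bxi + \bxi' \in 2^v \cR^\vee$. Reading the last constraint in coordinates aligned with $\balp$ gives
\[
|\zeta_j - \zeta_1 \alp_j| \ll 2^v \qquad (2 \le j \le k),
\]
while the $\eta^{-1}$-length constraint on the projection of $\bzeta$ along $\balp$ is deliberately not used, so as to preserve uniformity in $\eta$.

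Following \S\ref{sec: VTP}, I would write $N^\flat = N_1 + N_2$ according to whether the vectors $\bt := \bxi/n$ and $\bt' := \bxi'/n'$ are $\bQ$-parallel ($N_1$) or not ($N_2$). The non-parallel count $N_2$ should be controllable by the divisor bound plus linearly-independent-rows technique from the curved proof, with the two coordinate choices $x_1, x_2$ of that argument replaced by a pair $(\zeta_i, \zeta_j)$ with $2 \le i < j \le k$ subject to the two tube inequalities $|\zeta_i - \zeta_1 \alp_i|, |\zeta_j - \zeta_1 \alp_j| \ll 2^v$. This produces a saving of $2^{-(1 - k\tau)(m + m')/(2k)}$, already stronger than needed once $k \ge 3$.

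The main obstacle is the parallel count $N_1$, and this is where the dual exponent $\ome^*$ enters. Writing $\bxi = c \bw$ and $\bxi' = c' \bw$ for a primitive $\bw \in \bZ^k$ and non-zero $c + c' \in \bZ$, the tube constraint reduces to $|w_j - w_1 \alp_j| \ll 2^v/|c + c'|$ for $j \ge 2$. For any pair $2 \le i < j \le k$, cross-multiplication then yields
\[
\|w_i \alp_j - w_j \alp_i\| \le |w_i \alp_j - w_j \alp_i| \ll 2^v \, (|\alp_i| + |\alp_j|)/|c + c'|,
\]
which is exactly the sort of linear-form inequality controlled by $\ome^*(\alp_i, \alp_j) \le \ome_2(\cH)$: for any $\ome > \ome_2(\cH)$, the number of integer pairs $(w_i, w_j)$ with $\max(|w_i|, |w_j|) \le W$ and $|w_i \alp_j - w_j \alp_i| < \delta$ vanishes whenever $\delta < W^{-\ome}$ and $W$ is large, while for larger $\delta$ it is $\ll W^2 \delta + W$. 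Combining this dichotomy with the size constraints $|c| \ll 2^u/(n d_{i,1,m})$ and $|c'| \ll 2^{u'}/(n' d_{i',1,m'})$, the divisibility conditions $n \mid c\bw$ and $n' \mid c'\bw$, the admissibility bound \eqref{eqn: tau}, and a trivial count over the remaining $k-3$ coordinates of $\bw$, yields
\[
N_1 \ll 2^{(m+m'+u+u')\eps + k(u+u') + 2v} \cdot 2^{(2 \ome_2(\cH) + 4 - k + O(\tau) + o(1))(m+m')/(2k)} f(m,m').
\]

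Summing the $N_1$ and $N_2$ contributions over $(u, u', v)$ using the rapid $L$-decay from Lemma~\ref{EssentialSupport2} and \eqref{HyperplaneDecay}, then over $(m, m', i, i')$, and comparing with $\lam(f_n)\lam(f_{n'})$ via \eqref{normalisation} as in the curved proof, delivers \eqref{eqn: FlatSmoothVTP}. Uniformity in $\eta$ is automatic throughout: the implied constants in \eqref{HyperplaneDecay} and the rectangle decay \eqref{decay2} are $\eta$-independent, and the $\eta^{-1}$ tube length of $\cR^\vee$ is never invoked in the counting arguments.
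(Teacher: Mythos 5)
Your overall architecture (Fourier expansion, dyadic decomposition in $m,m',u,u',v$, splitting the lattice-point count into parallel and non-parallel parts) matches the paper's, but you have located the arithmetic input in the wrong half of the count, and your treatment of the non-parallel part $N_2$ has a genuine gap. You claim $N_2$ can be handled by transplanting the curved-case divisor-bound argument, replacing the free coordinates $x_1,x_2$ by a pair $(\zeta_i,\zeta_j)$ with $2\le i<j\le k$, with no reference to $\ome^*$. This fails: in the curved proof the coordinates $x_1,x_2$ each range over only $O(2^v)$ values, and it is precisely this that makes the divisor bound affordable. In the flat setting the tube only constrains $\zeta_i,\zeta_j$ to lie within $O(2^v)$ of $\alp_i\zeta_1$ and $\alp_j\zeta_1$, while $\zeta_1=t_1n+t_1'n'$ ranges over an interval of length $\gg 2^u/d_{i,1,m}\gg 2^{m+u}$; enumerating $(\zeta_i,\zeta_j)$ therefore costs an extra factor of order $2^{\max(m,m')}$ beyond $2^{2v}$, which destroys the claimed saving. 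Moreover, the dual exponent cannot be avoided in $N_2$. Writing the two tube inequalities as $M(n,n')^{T}=(O(2^v),O(2^v))^{T}$ with
\[
M = \begin{pmatrix} t_i - \alp_i t_1 & t_i' - \alp_i t_1' \\ t_j - \alp_j t_1 & t_j' - \alp_j t_1' \end{pmatrix},
\]
the number of admissible $(n,n')$ is governed by $2^{2v}(1+\sig_1)(1+\sig_2)$ with $\sig_1\sig_2=|\det M|^{-1}$, and $\det M$ is an integer combination $m_0+m_1\alp_i+m_2\alp_j$ with coefficients of size $O(\max_l (r_lr_l')^{2})$. Without the lower bound $|\det M|\gg (\max_l|m_l|)^{-\ome^*(\alp_i,\alp_j)-o(1)}$ --- which is exactly what the hypothesis $\ome_2(\cH)<\infty$ supplies --- $\det M$ can be arbitrarily small and $N_2$ is uncontrolled. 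This is also visible from the shape of the conclusion: the factor $2\ome_2(\cH)$ in $u(n,n')$ arises from the non-parallel count, which is the dominant term.

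Your treatment of the parallel count $N_1$, via the cross-multiplied inequality $\|w_i\alp_j-w_j\alp_i\|\ll 2^v(|\alp_i|+|\alp_j|)/|c+c'|$, is a legitimate alternative route; the paper instead bounds $N_1$ using the simultaneous exponent $\ome_1(\cH)=\ome(\alp_2,\ldots,\alp_k)$ and the inequality $\ome_2(\cH)\ge\ome_1(\cH)$, and in either case $N_1$ is not the bottleneck. To repair the argument, redirect the $\ome^*$ input to $N_2$: select two rows $i,j$ for which the corresponding $2\times 2$ minor of the matrix with columns $\bt,\bt'$ is nonzero, lower-bound $|\det M|$ via the dual exponent, count $(n,n')$ in the resulting parallelogram, and only then enumerate the remaining $t$-coordinates and the free transverse coordinates of $\bx$.
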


To see how Theorem \ref{FlatVTP} follows from Theorem \ref{SmoothFlatVTP}, notice that (\ref{eqn: FlatSmoothVTP}) holds uniformly in $\eta$. On the other hand, for each $N \in \bN$,
\[
V_N(\mu_\eta)\to V_N(\mu_0)
\]
as $\eta\to 0.$ Since $\tau$ is small, we deduce Theorem \ref{FlatVTP}.

\begin{proof}[Proof of Theorem \ref{SmoothFlatVTP}] We initially follow the analysis of the previous section, giving
\[
V_N(\mu_\eta) = V_N(\lam) + E,
\]
where
\[
E \ll \sum_{m,m' \le \frac{\log N}{\log 2}} w_m w_{m'} S(m,m').
\]
Here $w_m$ is as before, see \eqref{normalisation}. Similarly $S(m,m')$ is defined as before, but with a different measure $\mu_\eta$, so
\[
S(m,m') \ll \# I_m^{-1} \# I_{m'}^{-1} \sum_{u,u'=1}^\infty 2^{-uL - u'L}
\sum_{\substack{n \in D_m \\ n' \in D_{m'}}} \sum_{\substack{i \in I_m \\ i' \in I_{m'}}}
\sum_{\substack{n \mid \bxi \ne \bzero \\ n' \mid \bxi' \ne \bzero \\ \bxi + \bxi' \ne \bzero}} \hat \mu_\eta(\bxi + \bxi').
\]
The measure $\mu_\eta$ is essentially supported on the tube $\cR^\vee$ from \S \ref{FlatETPsection}, by \eqref{HyperplaneDecay}. 
We obtain
\[
S(m,m') \ll_L 
\sum_{\substack{i \in I_m \\ i' \in I_{m'}}}
\sum_{u,u',v= 1}^\infty
\frac{
N(m,m',u,u',v; i, i')}{2^{(u+u'+v)L} \#I_m \# I_{m'}},
\]
with a slight further difference as follows. Recalling \eqref{Tnx}, the quantity 
$
N(m,m',u,u',v)
$
now counts integer solutions to $T \bn = \bx$ such that
\begin{align*}
&n \in D_m, \qquad
n' \in D_{m'}, \qquad
\bt, \bt', \bx \ne \bzero, \qquad
\bx \in \cT, \\
&|t_j n| \le 2^{u}/d_{i,j},
\quad
|t_j' n'| \le 2^{u'}/d_{j,m',i'}
\qquad (1 \le j \le k).
\end{align*}
Here $C = C(\balp)$ is a positive constant, and
\[
\cT = \{ \bx \in \bR^k:
|x_j - \alp_j x_1| \le C2^v
\quad 
(2 \le j \le k) \}.
\]
We again write
\[
N(m,m',u,u',v; i, i') = N_1
+ N_2,
\]
where $N_1$ counts solutions with $\bt, \bt'$ parallel. 

\bigskip

To estimate $N_2$, we assume as we may that at least two of the first three rows of $T$ are linearly independent. 
Since $k \ge 3$, we have $t_1 n + t_1'n' = x_1$ and
\[
|t_j n + t_j' n' - \alp_j x_1| \le C2^v
\qquad (j = 2, 3).
\]
Therefore
\[
M
\begin{pmatrix}
n \\ n'
\end{pmatrix}
= \begin{pmatrix}
O(2^v) \\ O(2^v)
\end{pmatrix},
\]
where
\[
M = \begin{pmatrix}
t_2 - \alp_2 t_1 &
t_2' - \alp_2 t_1' \\
t_3 - \alp_3 t_1 & t_3' - \alp_3 t_1'
\end{pmatrix}
\]
and
\[
\det(M) =
\det
\begin{pmatrix}
t_2 &
t_2' \\
t_3 & t_3'
\end{pmatrix}
-\alp_2 \det
\begin{pmatrix}
t_1 &
t_1' \\
t_3 & t_3'
\end{pmatrix}
-\alp_3 \det
\begin{pmatrix}
t_2 &
t_2' \\
t_1 & t_1'
\end{pmatrix}
\]
As $\ome^*(\alp_2, \alp_3) < \infty$, and at least two of the first three rows of $T$ are linearly independent, we infer that $\det(M) \ne 0$. Now
$(n,n') \in M^{-1} \cB$, where $\cB$ is a ball of radius $O(2^v)$ centred at the origin.

The scaling of $\cB$ under $M^{-1}$ is characterised by its singular values 
\[
\sig_2 \ge \sig_1 > 0
\]
and we have
\[
|M^{-1} \cB \cap \bZ^2| \ll
2^{2v}(1+\sig_1)(1+\sig_2).
\]
We now estimate the sizes of $\sigma_1\sigma_2$ and $\sigma_1+\sigma_2$. Note that $\sig_1^2, \sig_2^2$ are the eigenvalues of $(M^T M)^{-1}$. With \eqref{rjdef}, we thus have
\[
\sig_1 \sig_2 = 
|\det(M)|^{-1} \ll \max_{i,j \le 3} (r_i r_j')^{\ome^*(\alp_2, \alp_3)+o(1)}.
\]
Similarly, as $\sig_1^{-2}, \sig_2^{-2}$ are the eigenvalues of $M^TM$, and the latter matrix has entries 
$O(\max \{r_1, r_2, r_3, r_1', r_2', r_3'\}^2)$, we compute that
\begin{align*}
\sig_1 + \sig_2 &= 
\sig_1 \sig_2 (\sig_1^{-1} + \sig_2^{-1}) \ll
 \sig_1 \sig_2 \sqrt{\sig_1^{-2} + \sig_2^{-2}} \\
 &\ll \max_{i,j \le 3} (r_i r_j')^{\ome^*(\alp_2, \alp_3)+o(1)}
\max \{r_1, r_2, r_3, r_1', r_2', r_3'\}.
\end{align*}
Given $t_1,t_1',t_2,t_2',t_3,t_3'$, the number of possibilities for $n$ and $n'$ is therefore bounded by a constant times
\[
2^{2v} \max_{i,j \le 3} (r_i r_j')^{\ome^*(\alp_2, \alp_3)+o(1)}
\max \{r_1, r_2, r_3, r_1', r_2', r_3'\}.
\]

After choosing $t_1,t_1',t_2,t_2',t_3,t_3'$ and $n,n'$, the variables $x_1, x_2, x_3$ are determined,
and there are $O(2^{(k-3)v})$ many possibilities for $x_4, \ldots, x_k$. Then there are at most $\min \{r_4, r_4'\}$ many possibilities for $t_4$ and $t_4'$.
Finally, there are at most $\sqrt{r_j r_j'}$ many possibilities for $t_j$ and $t_j'$, for $j= 5, 6, \ldots,k$.
Hence, using \eqref{eqn: tau},
\begin{align*}
N_2 &\ll 2^{(k-1)v} \max_{i,j} (r_i r_j')^{\ome_2(\cH) + o(1)} 
r_1 r_1' r_2 r_2' r_3 r_3' \sqrt{r_5 r_5' \cdots r_k r_k'} \\
&\qquad \cdot 
\max \{ \max_j r_j, \max_j r_j' \} \cdot 
\min \{ r_4, r_4' \} \\
&\ll 2^{O(u+u'+v)
+ (1+k\tau)(m+m')(\ome_2(\cH) + o(1))/k - (1-k\tau)(m+m')(k-2)/(2k)} f(m,m') \\
&\qquad \cdot 2^{(1+k\tau)(m+m')/k}
\\
&= 2^{O(u+u'+v)
+ (2\ome_2(\cH) + 4 - k + O(\tau) + o(1)) (m+m') / (2k)} f(m,m').
\end{align*}

\bigskip

To estimate $N_1$, we write
\[
\bt = a \bt'',
\qquad
\bt' = a' \bt'',
\]
for some primitive vector $\bt''$ and some $a, a' \in \bZ$, as well as
\[
\bx = r\bt'',
\qquad
r = an + a'n'.
\]
Note that $ana'n' \ne 0$, so $(r-an)(r-a'n') \ne 0$. After choosing $\bt''$, there are
\[
\fN(\bt'') =
\# \{ r \in \bZ \setminus \{ 0 \}: r \bt'' \in \cT \}.
\]
many choices for $r$. Next, we can either choose $a,n$ and apply the divisor bound to count possibilities for $a', n'$, or do it the other way around. Thus, for any $j$ such that $t_j'' \ne 0$, 
\[
N_1 \ll \sum_{\bt''} \fN(\bt'')
\sqrt{r_j r_j' 2^{m + m'}}
2^{o(u+u'+m+m')}.
\]
Consequently
\[
N_1 \ll \sum_{\bt''} \fN(\bt'') \fr 2^{(m+m')/2}
2^{o(u+u'+m+m')},
\]
where $\fr = \max_j \sqrt{r_j r_j'}$.

Observe that if $\bt'' \notin \cT$ then $\fN(\bt'') = 0$. If $\bt'' \in \cT$ then, upon choosing $r$ maximally, we see that
\[
\fN(\bt'')(t''_j - \alp_j t''_1)
\ll 2^v
\qquad 
(2 \le j \le k).
\]
If $t''_1 = 0$, then $\fN(\bt'') \ll 2^v$. If instead $t''_1 \ne 0$, then
\[
\fN(\bt'') \ll 2^v (t_1'')^{\ome_1(\cH) + o(1)},
\]
where
$
\ome_1(\cH) = \ome(\alp_2, \ldots, \alp_k).
$
Hence, using \eqref{eqn: tau},
\begin{align*}
N_1 &\ll 2^{v + o(u+u'+m+m') + (m+m')/2 } \fr^{1+\ome_1(\cH)} \\
&\ll 2^{O(u+u'+v+k\tau) +
(o(1)+(1+(1+\ome_1(\cH))/k)/2) (m+m')} \\
&\ll 2^{O(u+u'+v+k\tau) +
(o(1)+(-1+(1+\ome_1(\cH))/k)/2) (m+m')} f(m,m').
\end{align*}

\bigskip

Finally, using \eqref{fmm'} and the trivial inequality $\ome_2(\cH) \ge \ome_1(\cH)$, we see that if $N$ is a power of two then
\begin{align*}
E &\ll
\sum_{m, m' \le \frac{\log N}{\log 2}} w_m w_{m'} \sum_{\substack{i \in I_m \\ i' \in I_{m'}}}
\sum_{u,u',v= 1}^\infty
\frac{
N(m,m',u,u',v; i, i')}{2^{(u+u'+v)L} \#I_m \# I_{m'}}
\\ &\ll \sum_{m, m' \le \frac{\log N}{\log 2}} w_m w_{m'} f(m,m')
2^{(2\ome_2(\cH) + 4 - k + O(\tau) + o(1)) (m+m') / (2k)} \\
&= \sum_{m, m' \le \frac{\log N}{\log 2}} \# I_m \# I_{m'}
2^{(2\ome_2(\cH) + 4 - k + O(\tau) + o(1)) (m+m') / (2k)} \\
&\ll \sum_{m, m' \le \frac{\log N}{\log 2}} \frac{\# I_m \# I_{m'}}{2^{m+m'}} \sum_{\substack{n \in D_m \\ n' \in D_{m'}}}
(nn')^{(2\ome_2(\cH) + 4 - k + O(\tau) + o(1))/(2k)}.
\end{align*}
By \eqref{wbound}, we now have
\[
E \ll \sum_{n,n' \le N} \lam(f_n) \lam(f_n') u(n,n').
\]
\end{proof}

\section{The Lebesgue case}
\label{sec: Lebesgue}

In this section, we establish quasi-independence results for admissible set/function systems. These are \emph{homogeneous} if $\by = \bzero$.

\begin{thm}
\label{thm: quasi-independence part one}
Let $(A_n)_{n=1}^\infty$ be a homogeneous admissible set system. Then, for all $n,n' \in \bN$, we have
\begin{align*}
\lam(A_n\cap (A_{n'}+\bgam)) &\le \lam(A_n)\lam(A_{n'}) \\
& \quad + O \Bigg( \min 
\left \{
\frac{\lam(A_n)}{n^k}, \frac{\lam(A_{n'})}
{(n')^k} \right \} \gcd(n,n')^k \\
& \qquad \qquad \cdot \left(1+\frac{n^{\tau-1-1/k}+(n')^{\tau-1-1/k}}{\gcd(n,n')/(nn')} \right)^{k-1} \Bigg),
\end{align*}
uniformly for all $\bgam \in\mathbb{R}^k.$
\end{thm}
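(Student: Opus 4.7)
\emph{Proof plan.} The strategy is Fourier-analytic. Write
\[
\lam(A_n \cap (A_{n'}+\bgam)) = \sum_{\bxi \in \bZ^k} \widehat{\mathbf{1}_{A_n}}(\bxi)\,\overline{\widehat{\mathbf{1}_{A_{n'}}}(\bxi)}\, e(\bxi \cdot \bgam)
\]
via Parseval. Each sub-rectangle $A_n(\bd^{(i)}) = \cZ^{(i)} + n^{-1}\bZ^k$ is $n^{-1}$-periodic, so $\widehat{\mathbf{1}_{A_n}}$ is supported on $n\bZ^k$, and similarly $\widehat{\mathbf{1}_{A_{n'}}}$ on $n'\bZ^k$. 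The non-trivial frequencies therefore lie in $L\bZ^k$, where $L = \lcm(n,n') = nn'/\gcd(n,n')$, and the $\bxi = \bzero$ term produces exactly $\lam(A_n)\lam(A_{n'})$. It remains to bound the contribution from $\bxi \ne \bzero$.

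For each box $\cZ^{(i)}$, the Fourier coefficient factors as $|\widehat{\mathbf{1}_{\cZ^{(i)}}}(\bxi)| \ll \prod_j \min(d_{i,j}, |\xi_j|^{-1})$. I would parametrise $\bxi = L \bm$ with $\bm \ne \bzero$ and split by the support $T = \{j : m_j \ne 0\}$ of $\bm$. Each non-zero coordinate contributes the 1D partial sum
\[
\sum_{m \ne 0} \min\bigl(d_{i,j}, (L|m|)^{-1}\bigr)\,\min\bigl(d_{i',j}, (L|m|)^{-1}\bigr) \ll \frac{\min(d_{i,j}, d_{i',j})}{L},
\]
proved by dyadic splitting (logarithmic losses are absorbed into the slack parameter $\tau$). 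Combining coordinate contributions and using $nn' = \gcd(n,n')\,L$, the error attached to a single pair $(i, i')$ obeys
\[
|\mathrm{Err}_{i,i'}| \ll \gcd(n,n')^k \prod_{j=1}^k \min(d_{i,j}, d_{i',j}) \cdot \Bigl[\prod_{j=1}^k\bigl(1 + C L \max(d_{i,j}, d_{i',j})\bigr) - 1\Bigr].
\]
The bracket is controlled by the elementary inequality $\prod_{j=1}^k(1+x_j) - 1 \le k(1+\max_j x_j)^{k-1}$, and the admissibility bound $d_{i,j}, d_{i',j} \ll n^{\tau-1-1/k} + (n')^{\tau-1-1/k}$ converts this into the claimed factor $(1 + (n^{\tau-1-1/k}+(n')^{\tau-1-1/k})/(\gcd(n,n')/(nn')))^{k-1}$. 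Summing over $(i, i') \in I_m \times I_{m'}$ and using the admissibility identity $\prod_j d_{i,j} = \pi_m$ (independent of $i$) together with the pointwise inequality $\prod_j \min(d_{i,j}, d_{i',j}) \le \min(\prod_j d_{i,j}, \prod_j d_{i',j})$ produces the $\min\{\lam(A_n)/n^k, \lam(A_{n'})/(n')^k\}$ factor.

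\emph{Main obstacle.} The most delicate step is the 1D partial sum, whose dyadic analysis must fit its logarithmic losses inside the $\tau$-slack. A secondary issue is the final sum over $I_m \times I_{m'}$: the admissibility hypothesis that $\prod_j d_{i,j}$ is $i$-independent is essential in order to prevent a spurious polynomial-in-$|I_m|$ factor from appearing.
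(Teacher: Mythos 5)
Your route is genuinely different from the paper's --- the paper follows Gallagher's geometric argument, using Property $\cP_n(\by)$ (the strongly star-shaped structure of the unit cell) together with the rearrangement inequality $\lam(H_n\cap(H_{n'}+\bt))\ge\lam(H_n\cap(H_{n'}+\bt'))$ for $\bt\le\bt'$ and an integral-comparison trick, with the error coming only from translates near the boundary of $[0,1/n]^k$. Your Fourier/Parseval approach never invokes the star-shaped hypothesis, and this is symptomatic of a real loss: taking absolute values of the Fourier coefficients is a union bound over the boxes, and it does not recover the stated estimate. Concretely, there are two unabsorbable polylogarithmic losses. First, the middle dyadic range of your one-dimensional sum genuinely contributes $\frac{\min(d,d')}{L}\log(\max(d,d')/\min(d,d'))$, i.e.\ a factor up to $\log(nn')$ per coordinate. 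Second, and contrary to your claim, the $i$-independence of $\prod_j d_{i,j}$ does not prevent a spurious factor from the double sum over $I_m\times I_{m'}$: it forces every one of the $\#I_m\,\#I_{m'}$ terms $\prod_j\min(d_{i,j},d_{i',j})$ to equal $\min(\pi_m,\pi_{m'})$ at the relevant frequencies, so you get $\#I_m\,\#I_{m'}\min(\pi_m,\pi_{m'})$, which exceeds the target $\min(\#I_m\pi_m,\#I_{m'}\pi_{m'})$ by a factor $\max(\#I_m,\#I_{m'})\asymp(\log n)^{k-1}$. These losses cannot hide in the $\tau$-slack, because the slack lives only inside the second summand of $\bigl(1+\delta\,nn'/\gcd(n,n')\bigr)^{k-1}$; when $\gcd(n,n')$ is large (e.g.\ $n'=cn$ with $c\le n^{1/k-\tau}$) that factor is $O(1)$ and the claimed error is exactly $\min\{\lam(A_n)/n^k,\lam(A_{n'})/(n')^k\}\gcd(n,n')^k$ with no room at all. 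The extra logarithms would then also break Theorem \ref{thm: quasi-independence part two}, whose error must be $O(\sum_n\lam(A_n))$ for the divergence Borel--Cantelli argument to close.

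Two smaller points. Your ``elementary inequality'' $\prod_{j}(1+x_j)-1\le k(1+\max_j x_j)^{k-1}$ is false (take all $x_j$ large: the left side is $\sim x^k$, the right side $\sim kx^{k-1}$); what your decomposition over supports $T\ne\emptyset$ actually produces is $\sum_{S\subsetneq[k]}\prod_{j\in S}x_j=\prod_j(1+x_j)-\prod_j x_j$, which \emph{is} $\le 2^k(1+\max_j x_j)^{k-1}$, so this slip is repairable. The normalisation of the box coefficients also needs care ($|\hat g_j(\xi)|\ll n\min(d_j,n/|\xi|)$ rather than $\min(d_j,|\xi_j|^{-1})$), but that only affects bookkeeping. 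The structural obstacles above are the real issue; to make a Fourier proof work you would need either genuine cancellation among the boxes (unavailable uniformly in $\bgam$) or to import the star-shaped geometry in some other form.
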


\begin{proof}
By the Haar property of Lebesgue measure, we may assume that $n\geq n'.$ Let us first examine the pattern of the union of the lattices $n^{-1} \mathbb{Z}^k$ and $(n')^{-1}\mathbb{Z}^k$ inside $[0,1)^k$. As $A_n,A_{n'}$ are $1/n,1/{n'}$-periodic, respectively, we look at
\[
P_{n,n'} = (n')^{-1} \mathbb{Z}^k\cap [0,1)^k\mod n^{-1}\mathbb{Z}^k.
\]
For convenience, we consider this as a subset of $[0,1/n)^k.$ If $\gcd(n,n')=1$, then there are precisely $(n')^k$ many points. They are distributed evenly inside $[0,1/n]^k$, i.e. they form the lattice $(nn')^{-1} \mathbb{Z}^k$. In general, there are still $(n')^k$ many points, when counted with their multiplicity $\gcd(n,n')^k.$ Therefore $P_{n,n'}$ is the lattice $(nn'/\gcd(n,n'))^{-1}\mathbb{Z}^k$ with each lattice point being visited $\gcd(n,n')^k$ many times. For  $\bgam \in \mathbb{R}^k$, we consider the shifted lattice
\[
P_{n,n'}(\bgam)=((n')^{-1}\mathbb{Z}^k+\bgam) \cap [0,1)^k \mod n^{-1} \mathbb{Z}^k.
\]
This again has gap $\gcd(n,n')/(nn')$ and multiplicity $\gcd(n,n')^k.$

We now consider the picture inside the torus $[0,1/n]^k \mod n^{-1} \mathbb{Z}^k.$ In this environment, the set $A_n$ is strongly star-shaped, i.e.
\[
\bx \in A_n \quad \text{and} \quad |z_j| \le |x_j| \: \forall j \quad \Longrightarrow \quad \bz \in A_n.
\]
The set $A_{n'}$ is a union of $(n')^k$ many translated copies --- possibly with exact overlaps --- of a strongly star-shaped set. We write $H_n, H_{n'}$ for the respective strongly star-shaped sets, in the torus. Let $\bt,\bt'\in [0,1/n]^k$ be vectors with $\bt\leq \bt'$ in the sense that each component of $\bt$ is smaller than the corresponding component of $\bt'$ in the one-torus $[0,1/n] \mod n^{-1} \bZ.$
Here the size of a component is its distance to the origin of the one-torus.
We infer from \cite[Lemma 1]{Gal1962} that 
\begin{equation} \label{observation}
\lam(H_n\cap (H_{n'}+\bt))
\geq \lam(H_n\cap (H_{n'}+\bt')). 
\end{equation}

\bigskip

For ease of exposition, we first assume that $\bgam = \bzero.$ Now, with $\bt_i$ as the translation vectors for copies of $H_{n'}$ in $A_{n'}$, we consider
\begin{align*}
\lambda(A_n\cap A_{n'})\leq&\sum_{i} \lam(H_n\cap (H_{n'}+\bt_i))\\
=&\sum_{\bt_i\in \partial}\lam(H_n\cap (H_{n'}+\bt_i))+\sum_{\bt_i\notin \partial}\lam(H_n\cap (H_{n'}+\bt_i)),
\end{align*}
where $\partial$ denotes the topological boundary of $[0,1/n]^{k} \subset \bR^k$.

For brevity, we refer to hypercubes simply as `cubes' in what follows. To each non-boundary $\bt_i\in (nn'/\gcd(n,n'))^{-1}\mathbb{Z}^k$, we associate a cube $C_i$ of side length $1/(nn'/\gcd(n,n'))$ such that any element of $C_i$ is at least as close to the origin as $\bt_i$, and moreover 
\[
C_i \cap C_j = \emptyset
\qquad (\bt_i \ne \bt_j).
\]
We can achieve this as follows.

\begin{itemize}
\item Decompose $[0,1/n)^k$ into $2^k$ many disjoint cubes with half the side length. Each of these only contains one torus origin.
\item If $\bt_i$ is inside one of the half-cubes, we can choose $C_i$ with one corner being $\bt_i$ and the opposite corner being closer to the torus origin, component-wise.
\end{itemize}
For example, consider $[0,1/(2n))^k.$ For each non-boundary translation vector $\bt_i\in [0,1/(2n))^k$, we choose the point 
\[
\bt_i -\frac{(1,\dots,1)}{(nn'/\gcd(n,n'))}
\]
as the opposite corner of $C_i.$

Now, by \eqref{observation}, 
\begin{align*}
&\sum_{\bt_i\notin
\partial}
\lam(H_n\cap (H_{n'}+\bt_i)) \\ &\leq \sum_{\bt_i\notin
\partial} \lam(C_i)^{-1}\int_{C_i} \lam(H_n\cap (H_{n'}+\bt)) \d \bt\\
& \leq
\gcd(n,n')^k\left(\frac{\gcd(n,n')}{nn'}\right)^{-k} \int_{[0,1/n]^k} \lam(H_n\cap (H_{n'}+ \bt)) \d \bt.
\end{align*}
Writing
\[
H_n(\bx) = \begin{cases}
1, &\text{if } \bx \in H_n \mmod n^{-1} \bZ^k \\
0, &\text{otherwise},
\end{cases}
\]
we thus have
\begin{align*}
\sum_{\bt_i\notin
\partial}\lam(H_n\cap (H_{n'}+\bt_i)) &\leq 
(nn')^k \int_{[0,1/n]^k}\int_{[0,1/n]^k} H_n(\bx) H_{n'}(\bx-\bt)\d \bx \d \bt \\ &=
(nn')^k \int_{[0,1/n]^k}\int_{[0,1/n]^k} H_n(\bx) H_{n'}(\bx') \d \bx \d \bx'\\
&= (nn')^k \lam(H_n)\lam(H_{n'}) = \lam(A_n)\lam(A_{n'}).
\end{align*}

For the boundary sum of $\bt_i,$
\[
\lam(H_n\cap (H_{n'}+\bt_i))\leq \min\{\lam(H_n),\lam(H_{n'})\}.
\]
Our task is to estimate the number of $\bt_i \in \partial$ such that $H_n$ and $H_{n'}+\bt_i$ intersect. We know that $H_n$ is contained in a cube of side length $O(n^{\tau-1-1/k})$. Using this information, we see that 
\begin{align*}
&\# \{i: \bt_i\in \partial, \quad H_n\cap (H_{n'}+\bt_i) \neq\emptyset\}
\\ &\ll 
\gcd(n,n')^k
\left(1+\frac{{n^{\tau-1-1/k} + (n')^{\tau-1-1/k}}}{\gcd(n,n')/(nn')} \right)^{k-1}.
\end{align*}
This proves the result when $\bgam=\bzero.$

\bigskip

Now let $\bgam \in \bR^k$ be arbitrary. The multiset $\fT$ of translation vectors $\bt_i$ for $H_{n'}$ is now shifted by $\bgam$ modulo $n^{-1} \bZ^k$, and the argument needs to be modified accordingly. For example, it might be that none of the $\bt_i$ are on the boundary, but some are close to it. 

Observe that $[0,1/n]^k$ can be covered by a union of cubes with disjoint interiors --- save for repetitions with multiplicity $(n,n')^k$ --- of side length $\gcd(n,n')/(nn')$ and whose corners in $[0,1/n]^k$ are in $\fT$. We call such a cube \emph{good} if it does not intersect the boundary of $[0,1/n]^k,$ and otherwise we call it \emph{bad}. We declare corners of bad cubes to be \emph{bad}, and 
the other $\bt_i$ are called \emph{good}. For each good $\bt_i,$ no adjacent cube is bad. Among the $2^k$ many such adjacent cubes $\cC$, there is at least one with the property that
\[
\bt \leq \bt_i
\qquad (\bt \in \cC).
\]

We consider
\begin{align*}
&\sum_{i} \lam(H_n\cap (H_{n'}+\bt_i))\\
=&\sum_{\bt_i \text{ good}} \lam(H_n\cap (H_{n'}+\bt_i))+\sum_{\bt_i \text{ bad}}
\lam(H_n\cap (H_{n'}+\bt_i)).
\end{align*}
The good sum can be treated with the integral trick and thereby bounded from above by $\lambda(A_n)
\lambda(A_{n'})$, regardless of $\bgam.$ 

For the bad sum, suppose $\bt_i$ is bad. Then there is an adjacent cube that intersects the boundary of $[0,1/n]^k.$ Let $\bt'_i$ be its nearest point on the boundary. 
Then $\bt'_i \leq \bt_i$ and
\[
\lambda(H_n\cap (H_{n'} + \bt_i))\leq \lambda(H_n\cap (H_{n'}+ \bt'_i)). 
\]
Observe that distinct $\bt_i'$ are at least $\gcd(n,n')/(nn')$ away from each other. It can happen that different $\bt_i$ give rise to the same $\bt'_i$.
This multiplicity is at most $2^k$, the number of corners of a cube. 

Finally, we bound the number of such $\bt_i'$ 
gotten from bad points such that $H_n$ and 
$H_{n'}+\bt'_i$ 
intersect. 
As $H_n$ is contained in a cube of side length $O(n^{\tau-1-1/k}),$
\begin{align*}
&\# \{i:
\bt'_i \text{ bad}, \quad H_n\cap (H_{n'} + \bt'_i) \neq\emptyset\}
\\ &\ll 
\gcd(n,n')^k
\left(1+
\frac{{n^{\tau-1-1/k}
+ (n')^{\tau-1-1/k}}}{\gcd(n,n')/(nn')} \right)^{k-1}.
\end{align*}
Since
\[
\lam(H_n\cap (H_{n'}+\bt_i')) \leq \min\{ \lam(H_n), \lam(H_{n'}) \},
\]
this completes the proof for general $\bgam$.
\end{proof}

\begin{thm}
\label{thm: quasi-independence part two}
Under the assumptions of Theorem \ref{thm: quasi-independence part one}, if $\bgam_{i,j} \in \bR^k$ for $i,j \in \bN$, where $k \ge 3$, then
\[
\sum_{n,n' \le N}
\lam(A_n\cap (A_{n'}+\bgam_{n,n'}))
\le \left( \sum_{n \le N} \lam(A_n)
\right)^2
+ O \left(
\sum_{n \le N} \lam(A_n)
\right).
\]
\end{thm}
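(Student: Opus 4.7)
The plan is to apply Theorem~\ref{thm: quasi-independence part one} to each pair $(n,n')$ with $n,n' \le N$. Because the bound there is uniform in the shift, substituting $\bgam = \bgam_{n,n'}$ gives
\[
\sum_{n,n' \le N} \lam(A_n \cap (A_{n'}+\bgam_{n,n'})) \le \left(\sum_{n \le N} \lam(A_n)\right)^2 + \sum_{n,n' \le N} R(n,n'),
\]
where $R(n,n')$ is the error term from Theorem~\ref{thm: quasi-independence part one}. The task reduces to proving that $\sum_{n,n' \le N} R(n,n') \ll \sum_{n \le N} \lam(A_n)$.

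To bound the error sum, I would first expand $(1+X)^{k-1} \ll \sum_{j=0}^{k-1}X^j$, where $X = nn' \gcd(n,n')^{-1}(n^{\tau-1-1/k}+(n')^{\tau-1-1/k})$, and dyadically decompose with $n \in D_m$, $n' \in D_{m'}$. By symmetry we may restrict to $m \le m'$, paying a factor of $2$ and handling the diagonal $n = n'$ separately via the trivial bound $R(n,n) \ll \lam(A_n)$. Since $\tau - 1 - 1/k < 0$, the factor $(n^{\tau-1-1/k}+(n')^{\tau-1-1/k})^j$ is dominated by $2^j n^{j(\tau-1-1/k)}$. Within a dyadic block, $\lam(A_n)/n^k$ depends only on $m$ (because $\prod_j d_{i,j}$ is independent of $i$ for an admissible system), so writing this as $W_0(m)$ we have $\min\{\lam(A_n)/n^k,\lam(A_{n'})/(n')^k\} \le W_0(m')$. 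The inner divisor sum
\[
\sum_{n \in D_m,\,n' \in D_{m'}} \gcd(n,n')^{k-j} \ll 2^{m+m'}\sum_{d \le 2^m}d^{k-j-2}
\]
is of size $\asymp 2^{m(k-j)+m'}$ for $j \le k-2$ (using $k \ge 3$ to ensure $k-j-2 \ge 0$ and so avoid a logarithmic loss), and $\ll m \cdot 2^{m+m'}$ for $j = k-1$.

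Combining these with the estimate $(nn')^j n^{j(\tau-1-1/k)} \asymp 2^{m'j + mj(\tau-1/k)}$ and summing over $m \le m'$, each $j$-contribution telescopes to at most $m' \cdot W_0(m') \cdot 2^{m'(1+j(\tau-1/k))}$. Since \eqref{eqn: tau} gives the \emph{strict} inequality $\tau < 1/k$, the exponent $1 + j(\tau - 1/k)$ is strictly less than $1$ for $j \ge 1$, and equals $1$ for $j = 0$ (where no $m'$ factor appears). Hence every $j$-contribution is $\ll W_0(m') \cdot 2^{m'} \asymp 2^{m'(1-k)}\lam(A_{n'})$ for $n' \in D_{m'}$, and summing over $m'$ recovers $\sum_n \lam(A_n)$ up to a constant.

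The main obstacle is the endpoint $j = k-1$: the divisor sum produces the unwanted polynomial factor $m'$, which must be absorbed by the exponential saving $2^{m'(k-1)(\tau-1/k)}$ that arises from the strict inequality $\tau < 1/k$. The hypothesis $k \ge 3$ is needed precisely to prevent this lone logarithmic factor from being compounded by additional losses in the non-endpoint $j \le k-2$ terms, keeping the divisor sums $\sum_{d \le 2^m} d^{k-j-2}$ free of log factors and ensuring that the $m$-polynomial wastage happens only once across the $k$ binomial contributions.
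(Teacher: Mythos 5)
Your proposal is correct in substance but takes a genuinely different route from the paper. The paper does not expand the full binomial: it uses $(1+X)^{k-1} \ll 1 + X^{k-1}$ and treats only the two endpoint terms, handling each by a direct divisor-sum manipulation pointwise in $n$ --- writing $r = \gcd(n,n')$, summing over $r \mid n$ and over $n' \le n$ with $r \mid n'$, and using $\sum_{s \mid n} s^{-(k-1)} \ll 1$ (this is precisely where $k \ge 3$ enters) for the $j=0$ term, and the divisor bound together with $(1-k\tau)(1-1/k)>0$ for the $j=k-1$ term. Your dyadic decomposition instead exploits the block-constancy of $\lam(A_n)/n^k$ for admissible systems to average the $\gcd$ sums over $n \in D_m$; this is a legitimate and arguably more robust mechanism (it avoids the pointwise loss $\sum_{s\mid n}s^{1-k}$ entirely, which is why the paper's proof degrades to an $n/\varphi(n)$ factor at $k=2$ while your averaging would not), and I verified that your scheme closes for every $j$. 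Two corrections to your write-up. First, your final exponents are off by a factor of $2^{m'k}$: summing the block contribution $W_0(m')\,2^{m(k-j)+m'}\,2^{m'j+mj(\tau-1/k)}$ over $m \le m'$ gives $\ll W_0(m')\,2^{m'(k+1+j(\tau-1/k))}$, not $W_0(m')\,2^{m'(1+j(\tau-1/k))}$; since $W_0(m')\,2^{m'(k+1)} \asymp \sum_{n' \in D_{m'}}\lam(A_{n'})$, the $j=0$ term is \emph{exactly} of the target size with no room to spare, and your claim that each contribution is $\ll 2^{m'(1-k)}\lam(A_{n'})$ followed by "summing over $m'$ recovers $\sum_n \lam(A_n)$" is internally inconsistent. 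Second, your diagnosis of where $k\ge 3$ is needed is not right: $k-j-2\ge 0$ holds automatically for all $j \le k-2$ whatever $k$ is, and in your dyadic argument the hypothesis $k\ge 3$ plays no visible role (the strict inequality $\tau<1/k$ alone absorbs the logarithm at $j=k-1$); it is the paper's pointwise treatment of the $j=0$ term that genuinely requires $k\ge 3$.
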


\begin{rem} If $k = 2$, then we get a factor of $\frac{n}{\varphi(n)}$ in the error term, i.e.
\[
\sum_{n,n' \le N}
\lam(A_n\cap (A_{n'} + \bgam_{n,n'}))
\le \left( \sum_{n \le N} \lam(A_n)
\right)^2
+ O \left(
\sum_{n \le N} \frac{n}{\varphi(n)}\lam(A_n)
\right).
\]
If $\psi$ is monotonic, then we can average this factor away using partial summation.
\end{rem}

\begin{proof}
By Theorem \ref{thm: quasi-independence part one}, we have
\begin{align*}
&\sum_{n,n'=1}^N \lam(A_n\cap (A_{n'}+\bgam_{n,n'})) \\
&\le 
2\sum_{n' \le n}
\Biggl(
\lam(A_n)\lam(A_{n'})
\\ 
& \quad +
O \left(
\frac{\lam(A_n)}{n^k}\gcd(n,n')^k
\left(1+\frac{n^{\tau-1-1/k}
+ (n')^{\tau-1-1/k}} {\gcd(n,n')/(nn')}\right)^{k-1}
\right)
\Biggr) \\
&\le S_1 + O(S_2 + S_3),
\end{align*}
where
\begin{align*}
S_1 = \left( \sum_{n \le N} \lam(A_n) \right)^2, \qquad
S_2 = \sum_{n' \le n \le N}
\frac{\lam(A_n)}{n^k} \gcd(n,n')^k,
\end{align*}
and
\[
S_3 = \sum_{n' \le n \le N}
\frac{\lam(A_n)}{n^k} \gcd(n,n')^k
\left(
\frac{(n')^{\tau-1-1/k}}{\gcd(n,n')/(nn')}
\right)^{k-1}.
\]
As $k-1\geq 2$,
\begin{align*}
S_2
& \leq \sum_{n \le N}
\sum_{r \mid n}
\frac{\lam(A_n)}{n^k}
\sum_{\substack{
n'\leq n \\ n' \equiv 0 
\mmod r}} r^k = \sum_{n \le N} \lam(A_n)
\sum_{r \mid n}
\frac{n}{r}
\left(\frac{n}{r} 
\right)^{-k} 
\\
&= \sum_{n \le N} \lam(A_n)
\sum_{s \mid n}s^{-(k-1)} \ll \sum_{n\le N} \lam(A_n).
\end{align*}
As $1-1/k\ge 1/2$,
\begin{align*}
S_3
&\le \sum_{n \le N}
\sum_{r \mid n}
\frac{\lam(A_n)}{n^k}
\sum_{\substack{n'\leq n
\\ n' \equiv 0 \mmod r}} 
r^k 
\frac{(nn'/r)^{k-1} } {(n')^{k-1}
(n')^{(1-k\tau)(1-1/k)}}
\\
&= \sum_{n \le N}
\frac{\lam(A_n)}{n}
\sum_{r \mid n} r
\sum_{\substack{n' \leq n \\ n' \equiv 0 \mmod r}}
\frac{1}{(n')^{(1-k\tau)(1-1/k)}} \\
&\ll \sum_{n \le N} 
\frac{\lam(A_n)}{n}
\sum_{r \mid n} r 
\frac{1}{r^{(1-k\tau)(1-1/k)}}  
(n/r)^{1-(1-k\tau)(1-1/k)} \\
&\ll \sum_{n \le N} \lam(A_n) 
\sum_{r \mid n} 
\frac{1}{n^{(1-k\tau)(1-1/k)}}
\ll \sum_{n \le N} \lam(A_n).
\end{align*}
\end{proof}

\bigskip

Next, we establish functional counterparts to the above theorems. We will replace each set $A_n$ by a smooth approximation $f_n$ supported on $A_n$. Specifically, we will associate to $(A_n)_{n=1}^\infty$ an admissible function system $(f_n)_{n=1}^\infty$, so that $f_n \le 1_{A_n} \le 1$ pointwise.

Recall that $A_n$ is a union of strongly star-shaped sets, each of which is a union of rectangles with disjoint interiors. 
The approximation scheme in \S \ref{sec: rectangle decomposition} dictates that we need to consider $f_n$ being a sum of translated copies of $b_\cR$, for various rectangles $\cR$ that comprise $A_n.$ The bump function $\bb_\cR$ is supported on $\cR$, since $\bb$ is supported on $[-1,1]^k$. 

\begin{thm}
\label{thm: functional quasi-independence part one}
Let $\psi: \bN \to [0,1)$. Consider an admissible function system $(f_n)_{n=1}^\infty$ as above. Then, for some constant $C= C_k>0,$ for all $n,n' \in \bN$ we have
\begin{align*}
\lam(f_n f_{n'}(\cdot +\bgam)) &\le C\lam(f_n)\lam(f_{n'}) \\
& \quad + O \Bigg( \min 
\left \{
\frac{\lam(f_n)}{n^k}, \frac{\lam(f_{n'})}
{(n')^k} \right \} \gcd(n,n')^k \\
& \qquad \cdot \left(1+\frac{n^{\tau-1-1/k} + (n')^{\tau-1-1/k}}{\gcd(n,n')/(nn')}\right)^{k-1} \Bigg),
\end{align*}
uniformly for all $\bgam \in \mathbb{R}^k.$ 
\end{thm}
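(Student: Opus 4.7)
My plan is to reduce to the setwise Theorem \ref{thm: quasi-independence part one} by combining the pointwise domination $f_n \le 1_{A_n}$ with a routine renormalisation.

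For the pointwise domination, I will write $f_n = \sum_{i \in I_m} A_n^*(\bd^{(i)}; \cdot)$ and show that each summand is bounded by $1_{A_n(\bd^{(i)})}$. Each $A_n^*(\bd^{(i)}; \cdot)$ is a sum of translates of the bump $\bb_{\cR^{(i)}}$ centred at the points of $n^{-1}(\by + \bZ^k)$; the constraint $d_{i,j} \le 1/(2n)$ ensures that these translates have pairwise disjoint supports. Since $\bb_{\cR^{(i)}}$ takes values in $[0, 1]$ and the support of each translate restricted to $[0, 1]^k$ matches the corresponding component of $A_n(\bd^{(i)})$ exactly --- a rectangle if $b_j = \ome_1$, an annulus if $b_j = \ome_2$ --- we obtain $A_n^*(\bd^{(i)}; \cdot) \le 1_{A_n(\bd^{(i)})}$. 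Summing over $i \in I_m$ and recalling that the sets $A_n(\bd^{(i)})$ are pairwise disjoint yields $f_n \le 1_{A_n}$ pointwise, hence
\[
\lam\bigl(f_n \cdot f_{n'}(\cdot + \bgam)\bigr) \le \lam\bigl(A_n \cap (A_{n'} + \bgam)\bigr).
\]

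Although Theorem \ref{thm: quasi-independence part one} is stated only for \emph{homogeneous} systems, this is no restriction: the change of variable $\bx \mapsto \bx - \by/n$ transforms $A_n$ into its homogeneous counterpart and replaces the shift $\bgam$ by $\bgam' = \bgam + \by/n - \by/n'$, while leaving both sides above invariant by Haar. Applying the theorem then gives
\[
\lam\bigl(A_n \cap (A_{n'} + \bgam)\bigr) \le \lam(A_n)\lam(A_{n'}) + O\bigl(\mathrm{err}_{n,n'}\bigr),
\]
where $\mathrm{err}_{n,n'}$ is the error term of Theorem \ref{thm: functional quasi-independence part one} but with $\lam(A_n), \lam(A_{n'})$ in place of $\lam(f_n), \lam(f_{n'})$. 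A direct volume calculation shows that for each $i \in I_m$,
\[
\frac{\lam(A_n(\bd^{(i)}))}{\int A_n^*(\bd^{(i)})\,\d\lam}
= \prod_{j \le k} \frac{2\text{ or }1}{\|b_j\|_1}
\le c_k,
\]
for some constant $c_k > 0$ depending only on $k$ and the fixed bumps $\ome_1, \ome_2$; summing gives $\lam(A_n) \le c_k \lam(f_n)$. Setting $C = c_k^2$ then yields $\lam(A_n)\lam(A_{n'}) \le C \lam(f_n) \lam(f_{n'})$, and the same substitution absorbs an additional factor $c_k$ into the implicit constant of the error term, completing the reduction.

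The only non-trivial point is verifying $f_n \le 1_{A_n}$, in particular matching the supports of the annular bumps ($b_j = \ome_2$) to the annular components $\{x : d_{i,j}/2 \le |x - (a_j + y_j)/n| < d_{i,j}\}$ of $A_n(\bd^{(i)})$. Once this is in place, the argument is essentially bookkeeping: Haar invariance handles the shift $\by$, Theorem \ref{thm: quasi-independence part one} handles the set-theoretic geometry, and the bounded ratio $\lam(A_n)/\lam(f_n)$ handles the conversion between indicator functions and smooth bumps.
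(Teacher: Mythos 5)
Your proposal is correct and follows essentially the same route as the paper: pointwise domination $f_n \le 1_{A_n}$ reduces $\lam(f_n f_{n'}(\cdot+\bgam))$ to $\lam(A_n \cap (A_{n'}+\bgam))$, translation-invariance of Lebesgue measure handles the inhomogeneous shift, Theorem \ref{thm: quasi-independence part one} supplies the setwise bound, and the two-sided comparison $\lam(f_n) \le \lam(A_n) \ll \lam(f_n)$ converts back. You simply verify in more detail the support-matching and the volume ratio that the paper asserts as part of the construction.
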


\begin{proof}
Observe that
$
\lambda(f_n f_{n'}(\cdot + \bgam)) \leq \lambda(A_n\cap (A_{n'} - \bgam)).
$
We may assume that our admissible function system is homogeneous, since Lebesgue measure is translation-invariant.
Now Theorem \ref{thm: quasi-independence part one} gives
\begin{align*}
\lam(f_n f_{n'}(\cdot+\bgam)) &\le \lam(A_n)\lam(A_{n'}) \\
& \quad + O \Bigg( \min 
\left \{
\frac{\lam(A_n)}{n^k}, \frac{\lam(A_{n'})}
{(n')^k} \right \} \gcd(n,n')^k \\
& \qquad \qquad \cdot \left(1 + \frac{n^{\tau-1-1/k} + (n')^{\tau-1-1/k}}{\gcd(n,n')/(nn')}\right)^{k-1} \Bigg).
\end{align*}
We have in general
\[
\lambda(f_n)\leq \lam(A_n).
\]
The other direction holds up to a constant factor:
\[
\lam(A_n) \ll
\lambda(f_n).
\]
We conclude that for some constant $C>0,$
\begin{align*}
\lam(f_n f_{n'}(\cdot+\bgam)) &\le C\lam(f_n)\lam(f_{n'}) \\
& \quad + O \Bigg( \min 
\left \{
\frac{\lam(f_n)}{n^k}, \frac{\lam(f_{n'})}
{(n')^k} \right \} \gcd(n,n')^k \\
& \qquad \qquad \cdot \left(1+
\frac{n^{\tau-1-1/k} + (n')^{\tau-1-1/k}}{\gcd(n,n')/(nn')}
\right)^{k-1} \Bigg).
\end{align*}
\end{proof}

\begin{thm}
\label{thm: functional quasi-independence part two}
Under the assumptions of Theorem \ref{thm: functional quasi-independence part one}, if $\bgam_{i,j} \in \bR^k$ for $i,j \in \bN$, where $k \ge 3$, then
\[
\sum_{n,n' \le N} 
\lam(f_n f_{n'}(\cdot +\bgam_{n,n'}))
\le C\left( \sum_{n \le N} \lam(f_n)
\right)^2
+ O \left(
\sum_{n \le N} \lam(f_n)
\right).
\]
Here $C>0$ is as in Theorem \ref{thm: functional quasi-independence part one}.
\end{thm}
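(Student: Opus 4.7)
The plan is to follow the proof of Theorem \ref{thm: quasi-independence part two} essentially verbatim, with $\lam(A_n)$ replaced by $\lam(f_n)$ throughout, invoking Theorem \ref{thm: functional quasi-independence part one} in place of Theorem \ref{thm: quasi-independence part one}. Since the two quasi-independence bounds have the same structural form --- differing only by the constant $C$ in front of the main term --- the combinatorial architecture of the argument transfers without modification.

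First, I would symmetrise, using $\min \{ \lam(f_n)/n^k, \lam(f_{n'})/(n')^k \} \le \lam(f_n)/n^k$ for $n' \le n$, so that
\[
\sum_{n, n' \le N} \lam(f_n f_{n'}(\cdot + \bgam_{n,n'})) \le 2 \sum_{n' \le n \le N} \lam(f_n f_{n'}(\cdot + \bgam_{n,n'})).
\]
Applying Theorem \ref{thm: functional quasi-independence part one} term by term on the right decomposes the sum as $C S_1 + O(S_2 + S_3)$, where
\[
S_1 = \Bigl( \sum_{n \le N} \lam(f_n) \Bigr)^2,
\]
and $S_2, S_3$ are the exact analogues of the sums of the same name in the proof of Theorem \ref{thm: quasi-independence part two}, with $\lam(f_n)$ in place of $\lam(A_n)$.

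Next, I would bound $S_2$ and $S_3$ by $\sum_{n \le N} \lam(f_n)$ via the same elementary divisor-sum manipulations. Both estimates rely on the hypothesis $k \ge 3$: for $S_2$, through the convergence of $\sum_{s \mid n} s^{-(k-1)}$, which requires $k - 1 \ge 2$; for $S_3$, through the exponent inequality $1 - 1/k \ge 1/2$. As these bounds involve only the ratios $\lam(f_n)/n^k$, together with the standard $\gcd$ manipulations and the parameter $\tau < 1/k$, they transfer without change.

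The main obstacle is essentially nil --- the geometric content has been subsumed by Theorem \ref{thm: functional quasi-independence part one}, and the combinatorial divisor bookkeeping was carried out in Theorem \ref{thm: quasi-independence part two}. The only point requiring minor care is tracking the constant $C$: it appears in the main term exactly as stated, while any additional constant factor arising in $S_2 + S_3$ is absorbed harmlessly into the implicit constant of the $O(\cdot)$ error.
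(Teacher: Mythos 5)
Your proposal is correct and matches the paper's proof exactly: the paper simply states that the result follows by repeating the argument of Theorem \ref{thm: quasi-independence part two} with Theorem \ref{thm: functional quasi-independence part one} in place of Theorem \ref{thm: quasi-independence part one}. Your expanded account of the symmetrisation, the decomposition into $S_1, S_2, S_3$, and the role of $k \ge 3$ in bounding $S_2$ and $S_3$ is an accurate rendering of the details the paper leaves implicit.
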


\begin{proof}
The proof is similar to that of Theorem \ref{thm: quasi-independence part two}, using Theorem \ref{thm: functional quasi-independence part one} in place of Theorem \ref{thm: quasi-independence part one}.
\end{proof}

We now make some remarks on the constant $C>0.$ It depends on $k$, as well as on the choice of bump functions $\ome_1, \ome_2$ from \S \ref{admissible}. By choosing these to be sufficiently close to the indicator function of $\{ x: |x| < 1 \}$ and
$\{ x: 1/2 \leq |x| \leq 1\}$, respectively, we may take $C$ to be arbitrarily close to $1.$ We record this observation in the following lemma.

\begin{lem}
\label{lem: divergence constant}
The constant $C$ in Theorems \ref{thm: functional quasi-independence part one} and \ref{thm: functional quasi-independence part two} can be made arbitrarily close to one by choosing the bump functions $\ome_1, \ome_2$ in \S \ref{admissible} accordingly.
\end{lem}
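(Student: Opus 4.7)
The plan is to trace through the proof of Theorem~\ref{thm: functional quasi-independence part one} and identify the precise origin of the constant $C$. Since the set-level bound of Theorem~\ref{thm: quasi-independence part one} has leading constant exactly $1$, and since the error term is inherited intact, the constant $C$ arises solely from the substitution $\lam(A_n) \le c \lam(f_n)$, which converts the set-level bound $\lam(A_n) \lam(A_{n'})$ into the functional bound $c^2 \lam(f_n) \lam(f_{n'})$, giving $C = c^2$. The same constant then propagates into Theorem~\ref{thm: functional quasi-independence part two} verbatim, since the proof of the latter merely sums the former. It therefore suffices to show that $c = \sup_n \lam(A_n)/\lam(f_n)$ can be made arbitrarily close to $1$ by adjusting $\ome_1, \ome_2$.

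Next, I would compute both $\lam(A_n)$ and $\lam(f_n)$ explicitly by exploiting the disjoint-union and product structure. Splitting the coordinates into the two types $\cU_{i,j} = [0, nd_{i,j})$ (giving one-dimensional measure $2nd_{i,j}$ on $[0,1]$) and $\cU_{i,j} = [nd_{i,j}/2, nd_{i,j})$ (giving $nd_{i,j}$), and writing $s_i$ for the number of annular coordinates, one obtains
\[
\lam(A_n(\bd^{(i)})) = n^k \cdot 2^{k - s_i} \prod_{j=1}^k d_{i,j},
\qquad
\lam(A_n^*(\bd^{(i)})) = n^k \cdot \|\ome_1\|_1^{k - s_i} \|\ome_2\|_1^{s_i} \prod_{j=1}^k d_{i,j}.
\]
Admissibility guarantees that $\prod_j d_{i,j}$ is independent of $i$, so it cancels after summing over $i \in I_m$, leaving
\[
c = \frac{\lam(A_n)}{\lam(f_n)} = \frac{\sum_{i \in I_m} 2^{k - s_i}}{\sum_{i \in I_m} \|\ome_1\|_1^{k - s_i} \|\ome_2\|_1^{s_i}}.
\]

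Finally, I would choose $\ome_1$ and $\ome_2$, respecting the support constraints of \S\ref{admissible}, so that $\|\ome_1\|_1 \ge 2 - \eps$ and $\|\ome_2\|_1 \ge 1 - \eps$ --- for example, by taking each $\ome_j$ identically equal to $1$ on a slightly smaller concentric subset of its support, and smoothly decaying to $0$ at the boundary. This is possible for arbitrary $\eps > 0$, and preserves both the smoothness assumptions and the pointwise inequality $f_n \le \ind_{A_n}$ used elsewhere. The ratio $c$ is then bounded above term-by-term by $\bigl(2/(2-\eps)\bigr)^k \bigl(1/(1-\eps)\bigr)^k = 1 + O(\eps)$, independently of $n$, $m$, and the admissibility data. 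Letting $\eps \to 0$ yields $c \to 1$ and hence $C \to 1$. The argument is essentially elementary bookkeeping; the only subtlety is uniformity in $n$, and this is built into the admissibility definition via the $i$-independence of $\prod_j d_{i,j}$, so there is no serious analytic obstacle.
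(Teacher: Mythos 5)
Your proposal is correct and follows essentially the same route as the paper, which records this lemma with only the preceding remark that choosing $\ome_1,\ome_2$ close to the indicator functions of $\{x:|x|<1\}$ and $\{x:1/2\le|x|\le 1\}$ forces $C$ close to $1$; you have simply made explicit the bookkeeping that $C=c^2$ with $c=\sup_n \lam(A_n)/\lam(f_n)$ and that $c\to 1$ as $\|\ome_1\|_1\to 2$, $\|\ome_2\|_1\to 1$. The only quibble is that the uniformity in $n$ follows already from the term-by-term (mediant) bound on the ratio of sums, so it does not really rely on the $i$-independence of $\prod_j d_{i,j}$.
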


\section{The finale}
\label{finale}

\subsection{On the inhomogeneous generalisation of Gallagher's theorem}

We now establish Theorem \ref{FIG}. The convergence part was proved in \cite[\S 3.5]{CT2024}, since monotonicity was not used there. 

For the divergence part, we construct $(A_n)$ as in Example \ref{MultOK}, and associate a smoothing $(f_n)$ as in \S \ref{admissible}. When $n \in D_m$, this involves $\asymp m^{k-1}$ many disjoint sets $A_n(\bd)$, each of volume $\asymp \psi(n)$. Therefore
\begin{equation}
\label{fnlower}
\lam(f_n) \gg m^{k-1} \psi(2^m) \qquad (n \in D_m).
\end{equation}
By \eqref{SeriesDiverges} and the Cauchy condensation test, this confirms \eqref{DivTech}. The ETP and VTP are immediate for $\lam$, and Theorem \ref{thm: functional quasi-independence part two} gives \eqref{FunctionalLebesgueSecond}. Lemmas \ref{FunctionalDivergence} and \ref{lem: divergence constant} now deliver the desired conclusion. 

\subsection{The flat case}

To obtain the flat convergence result, Theorem \ref{FlatConvergence}, we use the rectangular decomposition $\fC_n$ from \S \ref{sec: rectangle decomposition}, working with $\mu_0$ from \S \ref{FlatETPsection}. We may assume that $\alp_2 \cdots \alp_\ell \ne 0$. Fixing $\varpi \in (\ome(\alp_2, \ldots, \alp_k), \ell)$, we may also assume that
\[
\psi(n) \ge n^{-\ell/\varpi}.
\]
Indeed, we may replace $\psi(n)$ by $\tilde \psi(n) = \psi(n) +  n^{-\ell/\varpi}$, since $\sum_n \tilde \psi(n) < \infty$.

Since $\psi(n) \ge n^{-\ell/ \varpi}$, we may implement our rectangular decomposition with
\begin{equation}
\label{ETPdiop}
\max \{ nd_j: 1 \le j \le \ell \} \gg
n^{-1/\varpi},
\end{equation}
denoting by $f_n$ the resulting sum. We may then apply Theorem \ref{FlatETP} to see that ETP is satisfied. Note in addition that $f_n \gg 1$ on $A_n^\times$, for any sufficiently large $n \in \bN$. 

By Lemma \ref{FunctionalConvergence}, it remains to confirm \eqref{ConvTech}. Observe that
\[
\lam(\bx \mapsto A_n^*(\bd; \bx)) \asymp \psi(n),
\]
where $\lam$ is Lebesgue measure on $[0,1)^k$. As $f_n$ is a sum of $O(\log^{k-1}(1/\psi(n)))$ many of these functions, and $\log(1/\psi(n)) \ll \log n$, we have
\[
\lam(f_n) \ll \psi(n) (\log n)^{k-1}
\]
and hence
\eqref{ConvTech}. This completes the proof of Theorem \ref{FlatConvergence}.

\bigskip

For the flat divergence case, Theorem \ref{FlatDivergence},
we use the admissible set system given in Example \ref{MultOK}. When $n \in D_m$, this involves $\asymp m^{k-1}$ many disjoint sets $A_n(\bd)$, each of volume $\asymp \psi(n)$. We thus have \eqref{fnlower}.
In light of \eqref{SeriesDiverges} and the Cauchy condensation test, this confirms \eqref{DivTech}. Here we have smoothed to obtain an admissible function system $(f_n)$, as described in \S \ref{admissible}. As explained in Example~\ref{MultOK}, we may arrange to have
\eqref{eqn: tau} and hence also \eqref{ETPdiop}. The ETP and VTP for $\mu_0$ follow from
Theorems \ref{FlatETP} and~\ref{FlatVTP}, respectively, while Theorem \ref{thm: functional quasi-independence part two} gives \eqref{FunctionalLebesgueSecond}. Theorem \ref{FlatDivergence} now follows from Lemmas \ref{FunctionalDivergence} and \ref{lem: divergence constant}. 

\subsection{The curved case}

For Theorems \ref{CurvedConvergence} and \ref{CurvedDivergence}, we require a classical result on the Fourier decay of 
smooth measures on curved hypersurfaces \cite[\S 8.3.1 Theorem 1]{Ste1993}.

\begin{thm}
[Decay on curved hypersurfaces]
\label{thm: surface fourier decay} 
Let $\cU$ be an open subset of a smooth hypersurface $\cM$ in $\bR^k$, on which the Gaussian curvature is non-vanishing. Let $\mu$ be a smooth measure supported on $\cU$. Then
\[
|\hat \mu(\bxi)| \ll \| \bxi \|_\infty^{(1-k)/2}.
\]
\end{thm}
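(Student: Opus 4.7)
The plan is to recognize $\hat\mu(\bxi)$ as an oscillatory integral on $\bR^{k-1}$ and apply the multidimensional stationary phase lemma; the hypothesis of non-vanishing Gaussian curvature supplies exactly the non-degeneracy needed to obtain decay at rate $\|\bxi\|_\infty^{-(k-1)/2}$. First I would apply a smooth partition of unity subordinate to finitely many charts to reduce to the case where $\mu$ is supported on a single coordinate patch on which $\cM$ is parametrized, after a suitable orthogonal rotation of $\bR^k$, as the graph $x_k = \phi(x')$ of a smooth function $\phi$ defined on an open $V \subset \bR^{k-1}$. Then
\[
\hat\mu_0(\bxi) = \int_V e\bigl(-\bxi' \cdot x' - \xi_k \phi(x')\bigr)\, \psi(x')\, \d x'
\]
for some $\psi \in C_c^\infty(V)$. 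Since the Gaussian curvature in this parametrization equals $\det(D^2\phi)/(1+|\nabla\phi|^2)^{(k+1)/2}$, the hypothesis amounts to $|\det D^2\phi|$ being bounded away from zero on $\supp\psi$.

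Writing $\bxi = R\omega$ with $R = \|\bxi\|_\infty$ and $\|\omega\|_\infty = 1$, I would split the analysis via a partition of unity on the compact set $\{\omega : \|\omega\|_\infty = 1\}$ into a \emph{non-stationary} regime, where $\omega$ stays uniformly bounded away from the outward unit normals to $\cM$ over $\supp\psi$, and a \emph{stationary} regime, where $\omega$ lies close to one such normal direction. Setting $\Phi_\omega(x') := \omega' \cdot x' + \omega_k \phi(x')$, the non-stationary regime gives a positive uniform lower bound on $|\nabla\Phi_\omega|$ on $\supp\psi$, and iterated integration by parts (exploiting the large parameter $R$ in the exponential) yields decay of any polynomial order in $R$, more than enough to beat $R^{-(k-1)/2}$.

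In the stationary regime I would rotate coordinates so that $\omega_k \asymp 1$. The critical-point equation $\omega' + \omega_k \nabla\phi(x') = 0$ then has a unique solution $x^*(\omega) \in \supp\psi$ by the inverse function theorem, since $\det D^2\phi \ne 0$ makes $\nabla\phi$ a local diffeomorphism; the Hessian $D^2\Phi_\omega(x^*) = \omega_k D^2\phi(x^*)$ has determinant bounded away from zero. The multidimensional stationary phase lemma (as in Stein, Chapter VIII, Proposition 6) then yields
\[
\hat\mu_0(\bxi) = R^{-(k-1)/2} a(\omega) + O\bigl(R^{-(k+1)/2}\bigr),
\]
with $a$ uniformly bounded. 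Summing the finitely many contributions from both partitions of unity completes the proof.

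The main obstacle is uniformity of the implicit constants in $\omega$ and across the various local charts. This is secured by compactness of $\supp\mu$ together with the assumed uniform non-vanishing of the Gaussian curvature on a neighborhood, which guarantees positive uniform lower bounds on $|\det D^2\phi|$ (for the stationary regime) and on $|\nabla\Phi_\omega|$ (for the non-stationary regime).
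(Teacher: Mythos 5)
Your argument is correct and is exactly the standard stationary-phase proof of this classical estimate; the paper itself offers no proof but simply cites \cite[\S 8.3.1, Theorem 1]{Ste1993}, whose proof is the graph-parametrization plus non-stationary/stationary dichotomy you describe. Nothing further is needed.
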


To prove Theorem \ref{CurvedConvergence}, we use the rectangular decomposition $\fC_n$ from \S \ref{sec: rectangle decomposition}. Similarly to the proof of Theorem \ref{FlatConvergence}, we may assume that $\psi(n) \ge n^{-2}$, and then argue that
\[
\lam(f_n) \ll \psi(n) (\log n)^{k-1},
\]
furnishing \eqref{ConvTech}.

For the ETP, we use Theorem \ref{CurvedETP}. We may assume that $\mu$ is supported on $[0,1]^k$, since $W_k^\times \subset [0,1)^k$. We also require that
\[
(d_1 \cdots d_k)^{-(1-\sigma/k)} = o(n^k) \qquad (n \to \infty).
\]
According to our decomposition, 
\[
d_1 \cdots d_k\asymp \psi(n)/n^k.
\]
We therefore require that
\[
\psi(n)^{-(1-\sigma/k)}
= o(n^\sigma),
\]
which is equivalent to 
\[
n^{-\sigma/(1-\sigma/k)} = o(\psi(n)).
\]

We are dealing with the case where $\sum_n \psi(n)\log^{k-1} n<\infty.$ We introduce an auxiliary approximation function
\[
\tilde{\psi}(n) =
\max\{ \psi(n),
(n\log^{k+1} n)^{-1} \},
\]
noting that
$W^\times_k(\psi) \subseteq W^\times_k(\tilde{\psi}).$ We now consider the approximation with $\tilde{\psi}.$ Notice that $\tilde{\psi}(n)\geq 1/(n\log^{k+1} n)$ and
\[
\sum_n \tilde{\psi}(n)\log^{k-1} n<\infty.
\]
Furthermore
\[
n^{-\sigma/(1-\sigma/k)} = o(\tilde{\psi}(n)),
\]
as long as $\sigma/(1-\sigma/k)>1.$ This is the case if $\sigma > 1/(1+1/k) = k/(k+1)$. 

Since we can take $\sig = (k-1)/2$, we see that as long as $k\geq 3,$ smooth measures on curved hypersurfaces have a large enough decay parameter $\sigma$. Theorem \ref{CurvedConvergence} now follows from Lemma \ref{FunctionalConvergence}, for all $k\geq 3$. In the case $k=2,$ we may instead invoke \cite{BL2007}.

\bigskip

Finally, to prove Theorem \ref{CurvedDivergence}, we apply Theorems \ref{CurvedETP}, \ref{CurvedVTP} and \ref{thm: functional quasi-independence part two}, together with Lemma \ref{lem: divergence constant}, to the admissible function system described in Example~\ref{MultOK}. Indeed, we see from \eqref{eqn: tau} that ETP follows from Theorem~\ref{CurvedETP}. Moreover, the condition that $k\geq 5$ ensures that we have the required Fourier decay exponent $\sigma\geq 2$ for Theorem \ref{CurvedVTP}.

\end{document}